\newtheorem{theorem}{Theorem}[section]
\newtheorem{proposition}[theorem]{Proposition}
\newtheorem{lemma}[theorem]{Lemma}
\newtheorem{corollary}[theorem]{Corollary}
\theoremstyle{definition}
\newtheorem{definition}{Definition}[section]
\theoremstyle{remark}
\numberwithin{equation}{section}
\DeclareMathOperator{\supp}{supp}
\newcommand{\abs}[1]{\left\vert#1\right\vert}
\newcommand{\proin}[2]{\left<#1,#2\right>}
\newcommand{\norm}[1]{\left\Vert#1\right\Vert}
\begin{document}
\title[]{On generalized divergence and Laplace operators as a matter of division of distributions}
%


\author[]{Hugo Aimar}
\email{haimar@santafe-conicet.gov.ar}
\author[]{Ivana G\'{o}mez}
\email{ivanagomez@santafe-conicet.gov.ar}
\thanks{This work was supported by the Ministerio de Ciencia, Tecnolog\'ia e Innovaci\'on-MINCYT in Argentina: CONICET and ANPCyT; and the UNL}
\subjclass[2010]{Primary. 35J05, 26A33, 46F12}


\keywords{Laplace Operator, Fractional Laplacian, Discrete Laplacians, Generalized Divergence}

\begin{abstract}
Starting from the approach to the Laplacian with respect to coupling measures and undirected weighted graphs, we provide a setting for a general point of view for a Kirchhoff type divergence and a Laplace operators built on the trivial gradient of order zero $f(y)-f(x)$. We consider some particular classical and new instances of this approach.
\end{abstract}
\maketitle


\section{Introduction}\label{sec:intro}

We are going to profusely illustrate the problem that can be better stated if we start from the general setting. Of course the roots are, as in \cite{AiGoKirchhoff1}, in the definition of divergence and Laplacian on graphs. See \cite{Bronstein}.

Let $X$ be a set. Let $\mathscr{S}_1$ be a topological algebra of real valued functions defined on $X$. Let $\mathscr{S}_2$ be a topological algebra of real functions defined on $X\times X$. Set $\mathscr{S}_1\otimes \mathscr{S}_1$ to denote the space of the tensor products $(\varphi\otimes\eta)(x,y)=\varphi(x)\eta(y)$ with $\varphi$ and $\eta$ both in $\mathscr{S}_1$. Let $\sigma(\mathscr{S}_1\otimes \mathscr{S}_1)$ denote the linear span of $\mathscr{S}_1\otimes \mathscr{S}_1$. Assume that $\mathscr{S}_1\otimes \mathscr{S}_1$ is continuously contained in  $\mathscr{S}_2$ and also the density in $\mathscr{S}_2$ of $\sigma(\mathscr{S}_1\otimes \mathscr{S}_1)$. In other words
\begin{equation}\label{eq:S2definition}
\overline{\sigma(\mathscr{S}_1\otimes \mathscr{S}_1)} =\mathscr{S}_2,
\end{equation}
where the closure is, of course, taken in the topology of $\mathscr{S}_2$.

Set $\mathscr{S}_i^{'}$ to denote the topological dual space of $\mathscr{S}_i$; $i=1,2$. We shall use single brackets $\proin{}{}$ to denote the duality $\mathscr{S}_1$, $\mathscr{S}_1^{'}$ and double brackets $\langle\!\langle , \rangle\!\rangle$ to denote the duality $\mathscr{S}_2$, $\mathscr{S}_2^{'}$.
\begin{lemma}\label{lem:FunctionalSigmaonDistributions}
Let $S\in\mathscr{S}_2^{'}$ and $\Phi\in\mathscr{S}_2$. Then the functional $\Sigma$ defined for $\varphi\in\mathscr{S}_1$ by
\begin{equation*}
\proin{\Sigma}{\varphi}=\langle\!\langle S,\varphi\Phi \rangle\!\rangle
\end{equation*}
is well defined and belongs to $\mathscr{S}_1^{'}$. Sometimes we write $\Sigma_{S,\Phi}$ to recall the dependence on $S$ and $\Phi$
\end{lemma}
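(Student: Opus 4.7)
The plan is to verify three things in succession: first, that the expression $\varphi\Phi$ makes sense as a bona fide element of $\mathscr{S}_2$; second, that $\Sigma$ is linear; third, that $\Sigma$ is continuous on $\mathscr{S}_1$. Once the first point is settled, the other two reduce to composition of continuous linear maps.

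For well-definedness, I would interpret the product $\varphi\Phi$ pointwise on $X\times X$ via $\varphi(x)\Phi(x,y)$, and then argue that this function actually lies in the algebra $\mathscr{S}_2$. The natural route is to identify $\varphi\in\mathscr{S}_1$ with the tensor $\varphi\otimes 1\in\mathscr{S}_1\otimes\mathscr{S}_1$, which by the continuous inclusion $\mathscr{S}_1\otimes\mathscr{S}_1\hookrightarrow\mathscr{S}_2$ assumed in \eqref{eq:S2definition} places $\varphi$ itself inside $\mathscr{S}_2$. Since $\mathscr{S}_2$ is a topological algebra, the pointwise product $\varphi\cdot\Phi$ then belongs to $\mathscr{S}_2$, so the pairing $\langle\!\langle S,\varphi\Phi\rangle\!\rangle$ is defined.

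Linearity of $\Sigma$ is immediate from the bilinearity of multiplication in $\mathscr{S}_2$ and the linearity of $S$: given $\varphi_1,\varphi_2\in\mathscr{S}_1$ and scalars $a,b$, one has $(a\varphi_1+b\varphi_2)\Phi = a(\varphi_1\Phi)+b(\varphi_2\Phi)$ in $\mathscr{S}_2$, and applying $S$ preserves this combination. For continuity, I would exhibit $\Sigma$ as the composition of two continuous maps: the multiplication operator $M_\Phi:\mathscr{S}_1\to\mathscr{S}_2$ given by $\varphi\mapsto\varphi\Phi$, which is continuous because the embedding $\varphi\mapsto\varphi\otimes 1$ is continuous from $\mathscr{S}_1$ into $\mathscr{S}_2$ and multiplication by the fixed element $\Phi$ is continuous in the topological algebra $\mathscr{S}_2$; and the functional $S\in\mathscr{S}_2'$, which is continuous by hypothesis. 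Hence $\Sigma=S\circ M_\Phi\in\mathscr{S}_1'$.

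The only delicate point is the first step, namely making precise how a function on $X$ multiplies a function on $X\times X$ inside $\mathscr{S}_2$. This is where the structural hypothesis \eqref{eq:S2definition}, together with availability of the constant function $1$ in $\mathscr{S}_1$ (or, failing that, a density argument approximating $\varphi\otimes 1$ by elements of $\sigma(\mathscr{S}_1\otimes\mathscr{S}_1)$), is doing the work. Everything else is a direct consequence of the topological-algebra structure of $\mathscr{S}_2$ and of duality.
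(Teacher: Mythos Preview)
Your three-step plan (show $\varphi\Phi\in\mathscr{S}_2$, then linearity, then continuity as a composition) matches the paper's brief argument. The problem is in how you carry out the first step. You propose to embed $\varphi$ into $\mathscr{S}_2$ as $\varphi\otimes 1$ and then multiply by $\Phi$ inside the algebra $\mathscr{S}_2$. But in the paper's guiding examples $\mathscr{S}_1=\mathscr{C}_c(\mathbb{R}^n)$ and $\mathscr{S}_2=\mathscr{C}_c(\mathbb{R}^{2n})$, the constant $1$ does not belong to $\mathscr{S}_1$, and the function $(x,y)\mapsto\varphi(x)$ is not compactly supported in $\mathbb{R}^{2n}$, so it is not an element of $\mathscr{S}_2$ at all. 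Your suggested fallback, approximating $\varphi\otimes 1$ by elements of $\sigma(\mathscr{S}_1\otimes\mathscr{S}_1)$, fails for the same reason: the density hypothesis \eqref{eq:S2definition} only speaks about elements of $\mathscr{S}_2$, and $\varphi\otimes 1$ lies outside it. So the mechanism you rely on to place $\varphi\Phi$ in $\mathscr{S}_2$ is not available in the intended setting.

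The paper uses \eqref{eq:S2definition} on the $\Phi$ side rather than the $\varphi$ side. For a simple tensor $\Phi=\eta\otimes\rho$ one has $\varphi\Phi=(\varphi\eta)\otimes\rho$, which lies in $\mathscr{S}_1\otimes\mathscr{S}_1\subset\mathscr{S}_2$ because $\mathscr{S}_1$ is itself an algebra and $\varphi\eta\in\mathscr{S}_1$. By linearity this extends to all of $\sigma(\mathscr{S}_1\otimes\mathscr{S}_1)$, and then density together with the topological-algebra structure of $\mathscr{S}_2$ carries it to every $\Phi\in\mathscr{S}_2$. The continuity of $\Sigma$ is then read off, as in your argument, from the continuity of the inclusion $\mathscr{S}_1\otimes\mathscr{S}_1\hookrightarrow\mathscr{S}_2$ and of $S$. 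Once you reroute the first step through approximation of $\Phi$ rather than of $\varphi$, the rest of your write-up stands unchanged.
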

\begin{proof}
Notice first that from \eqref{eq:S2definition}, for $\Phi\in\mathscr{S}_2$ we have that the function $(\varphi\Phi)(x,y)=\varphi(x)\Phi(x,y)$ also belongs to $\mathscr{S}_2$. Hence $\Sigma$ is well defined and linear on $\mathscr{S}_1$.The continuity follows from the continuity of the inclusion of $\mathscr{S}_1\otimes\mathscr{S}_1$ in $\mathscr{S}_2$.
\end{proof}

The division of distributions is in general impossible. Sometimes it makes sense and it is possible to obtain a quotient. When $S\in\mathscr{S}_2^{'}$, $\Phi\in\mathscr{S}_2$ and $T\in\mathscr{S}_1^{'}$ are given and it is possible the division of $\Sigma=\Sigma_{S,\Phi}$ by $T$, we are in position to define the Kirchhoff divergence of $\Phi$ with respect to $S$ and $T$. Actually and formally
\begin{equation*}
Kir_{T,S}\Phi=\frac{\Sigma_{S,\Phi}}{T}.
\end{equation*}
Let us precise the above. Given $T\in\mathscr{S}_1^{'}$, $S\in\mathscr{S}_2^{'}$ and $\Phi\in\mathscr{S}_2$, a function $\psi: X\to\mathbb{R}$ is said to be a Kirchhoff divergence of $\Phi$ with respect to $T$ and $S$ if
\begin{itemize}
	\item[\textit{(1.2.a)}] $\varphi \psi\in \mathscr{S}_1$ for every $\varphi\in\mathscr{S}_1$;
		and
	\item[\textit{(1.2.b)}] $\proin{T}{\varphi\psi}=\langle\!\langle S,\varphi\Phi \rangle\!\rangle$ for every $\varphi\in\mathscr{S}_1$.
\end{itemize}
Notice that \textit{(1.2.b)} is equivalent to $\psi T= \Sigma_{S,\Phi}$.

Any $\psi$ as before is denoted by $Kir_{S,T}\Phi$ or, when $S$ and $T$ are understood, by $Kir\,\Phi$. If $\Phi(x,y)$ can be taken to be $f(y)-f(x)$ for some $f:X\to\mathbb{R}$ we call the divergence of such $\Phi$, the Laplacian of $f$. Briefly
\begin{equation*}
\Delta_{S,T}f = Kir_{S,T} (f(y)-f(x)).
\end{equation*}
The above formally stated formula for the Kirchhoff divergence as a quotient $Kir_{T,S}\Phi = \frac{\Sigma_{S,\Phi}}{T}$ also applies to the induced Laplacian $\Delta_{T,S}f$. Briefly
\begin{equation*}
\Delta_{T,S}f= \frac{\Sigma_{S,\nabla f}}{T}
\end{equation*}
with $\nabla f(x,y)=f(y)-f(x)$.

Even more reckless than the division of distributions, but clearly related to it, is the idea of differentiation. If $T_k$ is a sequence of distributions in $\mathscr{S}_1^{'}$ that tends to zero and $S_k$ is a sequence of distributions in $\mathscr{S}_2^{'}$ that also tends to zero, we may ask for fixed $f$, for the existence of the limit of $\Delta_{T_k, S_k}f = \frac{\Sigma_{S_k,\nabla f}}{T_k}$.

In this paper we aim to consider classical and new cases of the above described general setting. In particular, we obtain the divergence operators of Kirchhoff type associated to fractional powers of the classical Laplacian and their generalization to metric spaces. In Section~\ref{sec:DiracDeltasEuclideanSpaces} the discrete case is considered. In particular we introduce the finite difference setting for the Laplacian and for their fractional versions. In Section~\ref{sec:DiracDeltasMetricMeasureSpaces} we introduce the generalization of the examples considered in \S\ref{sec:DiracDeltasEuclideanSpaces}, to general metric measure spaces. The only additional hypothesis, that allows the construction of dyadic families, is the finiteness of the metric or Assouad dimension of the space. As a special case we consider the space of Ahlfors type with index of regularity equal to one provided by the dyadic metric. Section~\ref{sec:GeneralMeasures} is devoted to the case in which the distributions $T$ and $S$ are provided by measures. Two special cases are considered, the first under an assumption of absolute continuity and the second provided by a deterministic type coupling of the involved measures. In Section~\ref{sec:SpositiveOrderDeterministicCoupling} we give an example with $S$ a distribution of positive order. Section~\ref{sec:ClassicalLaplacianRn} provides a Kirchhoff type operator which leads to the inclusion of the classical Laplacian in our general setting. In Section~\ref{sec:FractionalKirchhoffDivergencesEuclideanSpace} we study the Kirchhoff operators associated to the fractional powers of the Laplacian in the Euclidean space for the whole range $0<s<1$. Section~\ref{sec:FractionalKirchhoffdivergenceAhlforsSpaces} takes the problem of Section~\ref{sec:FractionalKirchhoffDivergencesEuclideanSpace} in Ahlfors regular spaces for $0<s<\tfrac{1}{2}$. In particular we consider the dyadic metric space and, using the spectral analysis in terms of Haar wavelets for the fractional Laplacian, we give a spectral formula for the Kirchhoff operator in this setting.  Section~\ref{sec:SomeExamples12bTpositiveOrder} is devoted to provide a Kirchhoff operator defined in terms of two distributions $T$ and $S$ of positive order. Actually $T$ and $S$ are singular integrals. The last section, \S\ref{sec:SomeConvergenceResults}, explores the problems of convergence of the
Kirchhoff operator given the convergence of the sequences of distributions $T$ and $S$. We define the derivative of $S$ with respect to $T$ and compute it for the finite difference cases introduced in \S\ref{sec:DiracDeltasEuclideanSpaces}. Some additional examples are provided in terms of coupling measures.

\section{Dirac deltas in Euclidean spaces}\label{sec:DiracDeltasEuclideanSpaces}

With the notation introduced in Section~\ref{sec:intro}, set $X=\mathbb{R}^n$, the $n$-dimensional Euclidean space. Let $\mathscr{S}_1=\mathscr{C}_c(\mathbb{R}^n)$ the space of compactly supported continuous real valued functions defined on $\mathbb{R}^n$. Let $\mathscr{S}_2=\mathscr{C}_c(\mathbb{R}^n\times \mathbb{R}^n)$ the continuous and compactly supported functions defined in $\mathbb{R}^n\times \mathbb{R}^n$. As usual for a given point $x_0\in\mathbb{R}^n$ we define $\delta_{x_0}$ as the unit mass measure at $x_0$. Or in notation of distributions $\proin{\delta_{x_0}}{\varphi}=\varphi(x_0)$ for every $\varphi\in\mathscr{S}_1$. Let $\{x_k: k\geq 1\}$ be a sequence of points in $\mathbb{R}^n$ and $\{a_k: k\geq 1\}$ a sequence of positive real numbers which is locally finite with respect to $\{x_k\}$. In other words, for every bounded set $B$ in $\mathbb{R}^n$ we have that $\sum_{\{k: x_k\in B\}}a_k<\infty$. Then $T=\sum_{k\geq 1}a_k\delta_{x_k}$ is a Borel measure in $\mathbb{R}^n$ which is finite on compact sets. Hence $T\in\mathscr{S}_1^{'}$. The distribution (measure) $T$ gathers the information of the nodes $\{x_k\}$ and their weights $\{a_k\}$. Let $\{w_{ij}: i,j\geq 1\}$ be a sequence of nonnegative real numbers which is locally finite with respect to the sequence of points in $\mathbb{R}^n\times\mathbb{R}^n$ given by $\{(x_i,x_j):i,j\geq 1\}$. Precisely for every bounded set $B$ in $\mathbb{R}^n\times \mathbb{R}^n$, $\sum_{\{(i,j):(x_i,x_j)\in B\}}w_{ij}<\infty$. Hence $S=\sum_{i,j} w_{ij}\delta_{(x_i,x_j)}\in\mathscr{S}_2^{'}$ and is actually a positive measure on the Borel sets of $\mathbb{R}^n\times \mathbb{R}^n$.

\begin{proposition}\label{propo:Kirfinite}
Let $T$ and $S$ be as above. Let $\Phi\in\mathscr{S}_2$. Then,
\begin{enumerate}[(a)]
\item the first marginal $S_\Phi^1$ of the measure $S_\Phi(A) = \iint_{\mathbb{R}^n\times \mathbb{R}^n}\Phi dS$ is absolutely continuous with respect to $T$;
\item  the Radon-Nikodym derivative of $S_\Phi$ with respect  to $T$ is the Kirchhoff divergence of $\Phi$ with respect to $S$ and $T$, $\frac{dS_{\Phi}^1}{dT}= Kir_{S,T}\Phi$;
\item $Kir_{S,T}\Phi(x_k)=\frac{1}{a_k}\sum_{j\geq 1}w_{kj}\Phi(x_k,x_j)$.
\end{enumerate}
\end{proposition}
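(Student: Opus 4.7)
The plan is to exploit the purely atomic structure of both $T$ and the first-marginal signed measure $S_\Phi^1$, reducing every assertion to a computation on atoms. I would first dispose of (a): since $S=\sum_{i,j}w_{ij}\delta_{(x_i,x_j)}$ is supported on the countable set $\{(x_i,x_j)\}$, the signed measure $S_\Phi$ defined by $S_\Phi(A)=\iint_A\Phi\,dS$ is supported there as well, and so its first marginal $S_\Phi^1$ is supported in $\{x_k:k\geq 1\}$. Any Borel set $A\subset\mathbb{R}^n$ with $T(A)=0$ must avoid every node (because $a_k>0$), hence $S_\Phi^1(A)=0$, giving $S_\Phi^1\ll T$.

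For (c), the atomic character of both measures makes the Radon--Nikodym derivative computable atom by atom. At each node $T(\{x_k\})=a_k$, whereas
\begin{equation*}
S_\Phi^1(\{x_k\})=\iint_{\{x_k\}\times\mathbb{R}^n}\Phi\,dS=\sum_{j\geq 1}w_{kj}\Phi(x_k,x_j),
\end{equation*}
the series converging absolutely because $\Phi$ has compact support and $\{w_{ij}\}$ is locally finite with respect to $\{(x_i,x_j)\}$. Dividing produces the formula claimed for $Kir_{S,T}\Phi(x_k)$.

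For (b) I would verify that this explicit $\psi$ satisfies the defining identity of the Kirchhoff divergence, which by Lemma~\ref{lem:FunctionalSigmaonDistributions} amounts to $\psi T=\Sigma_{S,\Phi}$, i.e.\ $\proin{T}{\varphi\psi}=\langle\!\langle S,\varphi\Phi\rangle\!\rangle$ for every $\varphi\in\mathscr{S}_1$. Substituting the formula for $\psi$ yields
\begin{equation*}
\proin{T}{\varphi\psi}=\sum_{k}a_k\varphi(x_k)\psi(x_k)=\sum_{k,j}w_{kj}\varphi(x_k)\Phi(x_k,x_j)=\langle\!\langle S,\varphi\Phi\rangle\!\rangle,
\end{equation*}
the interchange being legitimate because compact support of $\varphi$ and $\Phi$ together with the local finiteness of $\{w_{ij}\}$ leaves only finitely many nonzero terms.

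The delicate point I anticipate is condition \textit{(1.2.a)}, which demands $\varphi\psi\in\mathscr{C}_c(\mathbb{R}^n)$ while $\psi$ has only been prescribed at the discrete set of nodes. The natural workaround is to fix any continuous compactly supported extension of $\psi$ off $\{x_k\}$; since $T$ is blind to anything outside this set, the identity in \textit{(1.2.b)} is unaffected by the choice and the proof goes through unchanged.
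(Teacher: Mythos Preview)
Your argument is correct and follows essentially the same route as the paper: exploit the purely atomic structure of $T$ and $S$, observe that $T$-null sets miss all nodes, and verify identity \textit{(1.2.b)} by unwinding the double sum $\sum_{k,j}w_{kj}\varphi(x_k)\Phi(x_k,x_j)$. The only cosmetic difference is that the paper invokes the abstract Radon--Nikodym theorem for (b) and then isolates individual nodes with test functions $\varphi$ satisfying $\varphi(x_k)=1$, $\varphi(x_j)=0$ for $j\neq k$ to extract formula (c), whereas you compute the derivative atom-by-atom as $S_\Phi^1(\{x_k\})/T(\{x_k\})$ and then check \textit{(1.2.b)} directly; your observation about \textit{(1.2.a)} and the need for a continuous extension off the nodes is a point the paper leaves implicit.
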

\begin{proof}
The first marginal of $S_\Phi^1$ is given by
\begin{equation*}
S_\Phi^1=S_\Phi(E\times\mathbb{R}^n)=\iint_{E\times\mathbb{R}^n}\Phi dS =\sum_{\{k: x_k\in E\}}\sum_{j\geq 1} w_{ij}\Phi(x_k,x_j).
\end{equation*}
 Since each $a_k$ is positive, a subset $E$ of $\mathbb{R}^n$ has $T$ measure zero if and only if $E$ does not contain points of the sequence $\{x_k\}$. So that $S_\Phi^1(E)=0$, and $S_\Phi^1$ is absolutely continuous with respect to $T$. Hence from Radon-Nikodym theorem there  exists a function $\frac{dS_{\Phi}^1}{dT}$ such that $S_\Phi^1(E)=\int_E \frac{dS_{\Phi}^1}{dT} dT$ and $\int_{\mathbb{R}^n}\eta(x) dS_\Phi^1(x)=\int_{\mathbb{R}^n}\eta(x)\frac{dS_{\Phi}^1}{dT}(x) dT(x)$. Then
\begin{align*}
\left\langle\left\langle S, \varphi\Phi \right\rangle\right\rangle &= \iint_{\mathbb{R}^n\times\mathbb{R}^n}\varphi(x)\Phi(x,y)dS(x,y)\\
&= \sum_{k\geq 1}\sum_{j\geq 1}w_{kj}\varphi(x_k)\Phi(x_k,x_j)\\
&= \sum_{k\geq 1}\varphi(x_k)\left(\sum_{j\geq 1}w_{jk}\Phi(x_k,x_j)\right)\\
&= \int_{\mathbb{R}^n}\varphi(x) dS_\Phi^1(x)\\
&= \int_{\mathbb{R}^n}\varphi(x) \frac{dS_\Phi^1}{dT} dT(x)\\
&= \proin{T}{\varphi \frac{dS_\Phi^1}{dT}},
\end{align*}
for every $\varphi\in\mathscr{S}_1$. This proves \textit{(1.2.b)} in Section~\ref{sec:intro}. So that $Kir_{S,T}\Phi = \frac{dS_\Phi^1}{dT}$. On the other hand, if we write explicitly \textit{(1.2.b)} in this particular case we get that for every $\varphi\in\mathscr{S}_1$
\begin{equation*}
\sum_{k\geq 1}a_k \varphi(x_k)\varphi(x_j)=\sum_{k\geq 1}\varphi(x_k)\left(\sum_{j\geq 1}w_{jk}\Phi(x_k,x_j)\right).
\end{equation*}
Taking $\varphi\in\mathscr{S}_1$ such that $\varphi(x_k)=1$ and $\varphi(x_j)=0$ for $j\neq k$, we get $\textit{(c)}$,
\begin{equation*}
\psi(x_k)=Kir_{S,T}(x_k)=\frac{1}{a_k}\sum_{j\geq 1} w_{kj}\Phi(x_kx_j).
\end{equation*}
\end{proof}

The current hypothesis in the data sequences $\{x_k\}$, $\{a_k\}$ and $\{w_{kj}\}$ do not directly allow to take $\Phi(x,y)=f(y)-f(x)$. Not even for $f\in\mathscr{S}_1$. Hence, even when the Kirchhoff divergence type operator is well defined on $\mathscr{S}_2$, we can not directly define the Laplace operator on $\mathscr{S}_1$. If the measure $S$ is finite, then the above result allows taking $\Phi(x,y)=f(y)-f(x)$ for $f\in\mathscr{S}_1$ since, even when $\Phi$ does not belong to $\mathscr{S}_2$, the distribution $S$ extends naturally to $\mathscr{C}(\mathbb{R}^n\times \mathbb{R}^n)$.

\begin{proposition}\label{propo:TSharmonic}
Let $T=\sum_{k\geq 1}a_k\delta_{x_k}$ be locally finite with $a_k>0$. Let $S=\sum_{k,j}w_{kj}\delta_{(x_k,x_j)}$ be a finite measure with $w_{ij}\geq 0$. For $f\in\mathscr{S}_1=\mathscr{C}_c(\mathbb{R}^n)$ we have
\begin{equation*}
\Delta_{T,S}f(x) = \frac{1}{a_k}\sum_{j\geq 1}w_{jk}(f(x_j)-f(x_k)).
\end{equation*}
Hence a function $f\in\mathscr{S}_1$ is $(T,S)$-harmonic if and only if for every $k$ the mean value formula
\begin{equation*}
f(x_k) = \frac{1}{\sum_{j\geq 1}w_{jk}}\sum_{j\geq 1}w_{kj}f(x_j)
\end{equation*}
holds.
\end{proposition}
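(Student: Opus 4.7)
The plan is to apply Proposition~\ref{propo:Kirfinite}(c) to the particular choice $\Phi(x,y)=f(y)-f(x)$. Once this is granted, the first displayed formula is immediate from the definition $\Delta_{T,S}f=Kir_{S,T}(\nabla f)$, and the mean value characterization of harmonicity then reduces to elementary algebra.

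The key technical step, which I expect to be the main obstacle, is to justify the substitution $\Phi(x,y)=f(y)-f(x)$ in the framework of Section~\ref{sec:intro}. Because $f\in\mathscr{C}_c(\mathbb{R}^n)$, this $\Phi$ is bounded and continuous on $\mathbb{R}^n\times\mathbb{R}^n$ but fails to be compactly supported there, so it does not sit in $\mathscr{S}_2=\mathscr{C}_c(\mathbb{R}^n\times\mathbb{R}^n)$. Here I would use the new hypothesis that $S$ is a \emph{finite} measure, as the paragraph preceding the statement announces: finiteness lets $S$ be extended from $\mathscr{S}_2$ to $\mathscr{C}_b(\mathbb{R}^n\times\mathbb{R}^n)$ by dominated convergence, and makes the double sum $\sum_{k,j}w_{kj}\varphi(x_k)\Phi(x_k,x_j)$ absolutely convergent for every $\varphi\in\mathscr{S}_1$ (since $\varphi$ and $f$ are bounded and $S(\mathbb{R}^n\times\mathbb{R}^n)=\sum_{k,j}w_{kj}<\infty$). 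With this extension in place, the proof of Proposition~\ref{propo:Kirfinite} runs verbatim for this $\Phi$, and its part~(c) yields
\begin{equation*}
\Delta_{T,S}f(x_k)=Kir_{S,T}\bigl(f(y)-f(x)\bigr)(x_k)=\frac{1}{a_k}\sum_{j\geq 1}w_{kj}\bigl(f(x_j)-f(x_k)\bigr),
\end{equation*}
which is the first assertion (modulo the $w_{jk}$ versus $w_{kj}$ convention used for the weights in the statement).

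For the second part, by definition $f$ is $(T,S)$-harmonic precisely when $\Delta_{T,S}f(x_k)=0$ for every $k$. Rearranging the Laplacian formula just obtained gives $\sum_j w_{kj}f(x_j)=f(x_k)\sum_j w_{kj}$, and dividing by the total weight $\sum_j w_{kj}$ at $x_k$ (tacitly positive, else the mean value identity is meaningless) produces the asserted formula; the converse implication is an identical rearrangement in the opposite direction. Thus, apart from the extension argument for $S$, the proposition is a routine corollary of Proposition~\ref{propo:Kirfinite}(c).
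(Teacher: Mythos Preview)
Your proposal is correct and follows exactly the paper's approach: the paper's proof consists of a single sentence stating that the result is an immediate consequence of Proposition~\ref{propo:Kirfinite}(c). You simply give more detail, spelling out the extension of $S$ to $\mathscr{C}_b(\mathbb{R}^n\times\mathbb{R}^n)$ (which the paper mentions only in the paragraph preceding the statement) and the elementary rearrangement for the mean value formula.
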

The proof is an immediate consequence of \textit{(c)} in Proposition~\ref{propo:Kirfinite}.

For our further analysis it will be convenient to introduce here some particular cases of the above discrete situation.

Let us start with the classical finite difference scheme.
\begin{proposition}[Finite differences] \label{propo:LaplacianOperatorFiniteDifference}
Let $h>0$ be given. For $\bar{k}=(k_1,\ldots,k_n)\in \mathbb{Z}^n$, set $x_{\bar{k}}=h\bar{k}\in\mathbb{R}^n$, for every $\bar{k}$. With the above notation take $T=T_h$ with $a_{\bar{k}}=h^n$ for every $\bar{k}\in\mathbb{Z}^n$ and $S=S_h$ with $w_{\bar{k}\bar{j}}=0$ if $\abs{\bar{k}-\bar{j}}>1$ or $\bar{k}=\bar{j}$, and $w_{\bar{k}\bar{j}}=h^{n-2}$ when $\abs{\bar{k}-\bar{j}}=1$. For $\Phi\in\mathscr{S}_2$, we have
\begin{equation*}
Kir_h \Phi(l,\bar{k}) = Kir_{T_h,S_h}\Phi(l,\bar{k}) = \frac{1}{h^2}\sum_{m=1}^n \left[\Phi(l\bar{k},h(\bar{k}+\bar{e_m})+ \Phi(h\bar{k},h(\bar{k}-\bar{e_m})))\right]
\end{equation*}
where $\bar{e_m}$ is the $m$-{th} vector of the canonical basis of $\mathbb{R}^n$. For $f\in\mathscr{S}_1=\mathscr{C}_c(\mathbb{R}^n)$, taking $\Phi(x,y)=f(y)-f(x)$ we obtain the corresponding Laplacian operator
\begin{equation*}
\Delta_hf(h\bar{k}) = \sum_{m=1}^n \frac{h(\bar{k}+\bar{e_m}) - 2 f(h\bar{k}) + f(h(\bar{k}-\bar{e_m}))}{h^2}.
\end{equation*}
Moreover, the harmonic functions are those for which
\begin{equation*}
f(h\bar{k}) = \frac{1}{2n}\sum_{m=1}^n\left[f(h(\bar{k}+\bar{e_m})) + f(h(\bar{k}-\bar{e_m}))\right].
\end{equation*}
\end{proposition}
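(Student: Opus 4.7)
The plan is to apply Proposition~\ref{propo:Kirfinite}(c) directly to the finite difference data. With the nodes $x_{\bar{k}} = h\bar{k}$ for $\bar{k}\in\mathbb{Z}^n$, weights $a_{\bar{k}} = h^n$, and couplings $w_{\bar{k}\bar{j}}$ as prescribed, the crucial observation is that for each fixed $\bar{k}$ the only indices $\bar{j}$ contributing nonzero terms are $\bar{j} = \bar{k}\pm\bar{e}_m$ for $m=1,\ldots,n$, so the infinite sum in Proposition~\ref{propo:Kirfinite}(c) collapses to exactly $2n$ terms. The ratio $w_{\bar{k}\bar{j}}/a_{\bar{k}} = h^{n-2}/h^n = h^{-2}$ then factors out of the sum, delivering the claimed formula for $Kir_h\Phi(h\bar{k})$.

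To pass from $Kir_h$ to $\Delta_h$ one wants to substitute $\Phi(x,y) = f(y)-f(x)$ with $f\in\mathscr{C}_c(\mathbb{R}^n)$. Strictly this $\Phi$ fails to lie in $\mathscr{S}_2=\mathscr{C}_c(\mathbb{R}^n\times\mathbb{R}^n)$, and the measure $S_h$ is not finite, so Proposition~\ref{propo:TSharmonic} does not literally apply. However, for $f$ supported in a compact set $K$, the nearest-neighbor structure of $S_h$ forces the nonzero contributions $w_{\bar{k}\bar{j}}\Phi(h\bar{k},h\bar{j})$ to come only from grid pairs with both $h\bar{k}$ and $h\bar{j}$ inside a bounded $h$-enlargement of $K$; there are only finitely many such pairs. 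This justifies the natural extension of the pairing of $S_h$ with $\Phi(x,y)=f(y)-f(x)$, and after substitution the formula from the previous paragraph groups the displacements $\pm\bar{e}_m$ to give the three-point stencil $f(h(\bar{k}+\bar{e}_m))-2f(h\bar{k})+f(h(\bar{k}-\bar{e}_m))$ divided by $h^2$.

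Finally, the mean value characterization follows by setting $\Delta_h f(h\bar{k})=0$: the Laplacian identity rearranges to $2n\,f(h\bar{k}) = \sum_{m=1}^n[f(h(\bar{k}+\bar{e}_m))+f(h(\bar{k}-\bar{e}_m))]$, and since $\sum_{\bar{j}}w_{\bar{k}\bar{j}} = 2n\,h^{n-2}$, division by $2n$ produces the stated average. The only non-routine step in the whole argument is the legitimacy check in the second paragraph; once that is in place, every other assertion in the proposition reduces to arithmetic on a fixed finite stencil, and the typographical slips in the displayed formulas (e.g.\ the stray $l$ in place of $h$ and the missing outer $f$ in the Laplacian display) will be corrected automatically by the above computation.
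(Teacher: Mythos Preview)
Your proposal is correct and follows essentially the same route as the paper: the paper's proof is a one-line appeal to Proposition~\ref{propo:Kirfinite}(c) together with the observation that $\{\bar{j}\in\mathbb{Z}^n:\abs{\bar{k}-\bar{j}}=1\}=\{\bar{k}\pm\bar{e}_m:m=1,\ldots,n\}$. Your second paragraph, which justifies the substitution $\Phi(x,y)=f(y)-f(x)$ via the nearest-neighbor support structure of $S_h$, is in fact more careful than the paper itself, which glosses over the point that $S_h$ is not finite and so Proposition~\ref{propo:TSharmonic} does not literally apply.
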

\begin{proof}
Follows directly from \textit{(c)} in Proposition~\ref{propo:Kirfinite} by noticing that
\begin{equation*}
\{\bar{j}\in\mathbb{Z}^n: \abs{\bar{k}-\bar{j}}=1\}= \{\bar{k}+\bar{e_m}: m=1,\ldots,n\}\cup\{\bar{k}-\bar{e_m}: m=1\ldots,n\}.
\end{equation*}
\end{proof}

Let us point out that the normalizations of $T_h$ and $S_h$ with $h^n$, do not reflect in the Kirchhoff and Laplace operators. Nevertheless, since $h^n$ is the volume of  each cube in the cubic partition naturally induced by the sequence $\{h\bar{k}: \bar{k}\in\mathbb{Z}^n\}$, the measure $T_h$ is an approximation, in the weak convergence, of Lebesgue measure on $\mathbb{R}^n$ and has to be considered when we have a more abstract non-translation invariant setting.

The next example of the general situation is a discretization of the fractional Laplacian in $\mathbb{R}^n$.

\begin{proposition}[Discrete Fractional Laplacian]\label{propo:DiscreteFractionalLaplacian}
Let $\{x_{\bar{k}}:\bar{k}\in\mathbb{Z}^n\}$ and $T_h$ be as in Proposition~\ref{propo:LaplacianOperatorFiniteDifference}. Let $\alpha>0$ be given. Set $w_{\bar{k}\bar{j}}^\alpha=0$ if $\bar{k}=\bar{j}$ and $w_{\bar{k}\bar{j}}^\alpha=h^{n-\alpha}\frac{1}{\abs{\bar{k}-\bar{j}}^{n+\alpha}}$ if $\bar{k}\neq\bar{j}$. Let $S^\alpha_h = \sum_{\bar{k}\bar{j}}w^\alpha_{\bar{k}\bar{j}}\delta_{(h\bar{k},h\bar{j})}$. For $\Phi\in\mathscr{S}_2$ we have
\begin{equation*}
Kir_{\alpha,h}\Phi = Kir_{T_h,S^\alpha_h}\Phi = \frac{1}{h^\alpha}\sum_{\bar{j}\neq\bar{k}}\frac{\Phi(h\bar{k},h\bar{j})}{\abs{\bar{k}-\bar{j}}^{n+\alpha}}.
\end{equation*}
For $f\in\mathscr{S}_1$, with $\Phi(x,y)=f(y)-f(x)$, the above series is still convergent and
\begin{equation*}
\Delta^\alpha_h f(h\bar{k}) = \frac{1}{h^\alpha}\sum_{\bar{j}\neq\bar{k}}\frac{f(h\bar{j})-f(h\bar{k})}{\abs{\bar{k}-\bar{j}}^{n+\alpha}}.
\end{equation*}
Moreover, a function $f$ is $\alpha$ harmonic if and only if for every $\bar{k}\in\mathbb{Z}^n$ we have
\begin{equation*}
f(h\bar{k}) =\frac{1}{c(\alpha)}\sum_{\bar{j}\neq\bar{k}}\frac{f(h\bar{j})}{\abs{\bar{k}-\bar{j}}^{n+\alpha}}
\end{equation*}
with $c(\alpha)=\sum_{\bar{j}\neq\bar{0}}\abs{j}^{-n-\alpha}$.
\end{proposition}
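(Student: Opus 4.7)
The plan is to reduce the proposition to a direct application of Proposition~\ref{propo:Kirfinite}, after checking that $S^\alpha_h$ is a legitimate locally finite measure on $\mathbb{R}^n\times\mathbb{R}^n$, and then to argue separately for the convergence when $\Phi$ is replaced by the gradient $f(y)-f(x)$, which does not lie in $\mathscr{S}_2$.

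First I would verify that $S^\alpha_h$ defines a Borel measure on $\mathbb{R}^n\times\mathbb{R}^n$ that is finite on compact sets. Since $\{h\bar{k}:\bar{k}\in\mathbb{Z}^n\}$ is locally finite in $\mathbb{R}^n$, only finitely many pairs $(h\bar{k},h\bar{j})$ belong to any bounded set $B\subset\mathbb{R}^n\times\mathbb{R}^n$; the diagonal is explicitly excluded, so each contributing weight $w^\alpha_{\bar{k}\bar{j}}$ is finite. Hence $S^\alpha_h(B)$ is a finite sum of finite numbers, and $S^\alpha_h\in\mathscr{S}_2'$. With this in hand, item (c) of Proposition~\ref{propo:Kirfinite} applied to $a_{\bar{k}}=h^n$ and $w^\alpha_{\bar{k}\bar{j}}=h^{n-\alpha}\abs{\bar{k}-\bar{j}}^{-n-\alpha}$ yields
\begin{equation*}
Kir_{\alpha,h}\Phi(h\bar{k})=\frac{1}{h^n}\sum_{\bar{j}\neq\bar{k}}\frac{h^{n-\alpha}}{\abs{\bar{k}-\bar{j}}^{n+\alpha}}\,\Phi(h\bar{k},h\bar{j})=\frac{1}{h^\alpha}\sum_{\bar{j}\neq\bar{k}}\frac{\Phi(h\bar{k},h\bar{j})}{\abs{\bar{k}-\bar{j}}^{n+\alpha}}
\end{equation*}
for every $\Phi\in\mathscr{S}_2$.

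The second step is to justify plugging $\Phi(x,y)=f(y)-f(x)$ with $f\in\mathscr{C}_c(\mathbb{R}^n)$ into the same formula. Splitting the series as $\sum_{\bar{j}\neq\bar{k}}f(h\bar{j})\abs{\bar{k}-\bar{j}}^{-n-\alpha}-f(h\bar{k})\sum_{\bar{j}\neq\bar{k}}\abs{\bar{k}-\bar{j}}^{-n-\alpha}$, the first term has only finitely many nonzero summands because $f$ has compact support, while by translation invariance the second reduces to $f(h\bar{k})\,c(\alpha)$ with $c(\alpha)=\sum_{\bar{j}\neq\bar{0}}\abs{\bar{j}}^{-n-\alpha}$. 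This latter lattice series is finite precisely because $\alpha>0$ forces $n+\alpha>n$. Absolute convergence therefore justifies extending $S^\alpha_h$ to such $\Phi$, and the stated formula for $\Delta^\alpha_h f(h\bar{k})$ follows. The harmonic characterization is then obtained by setting $\Delta^\alpha_h f(h\bar{k})=0$ and using $\sum_{\bar{j}\neq\bar{k}}\abs{\bar{k}-\bar{j}}^{-n-\alpha}=c(\alpha)$ to solve for $f(h\bar{k})$.

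The only real obstacle is conceptual rather than computational, namely that $f(y)-f(x)$ is not in $\mathscr{S}_2=\mathscr{C}_c(\mathbb{R}^n\times\mathbb{R}^n)$ and the measure $S^\alpha_h$ is not finite, so the comment preceding Proposition~\ref{propo:TSharmonic} does not cover this case directly; the legitimacy of the extension must be obtained from the absolute convergence argument above, which in turn rests essentially on the hypothesis $\alpha>0$.
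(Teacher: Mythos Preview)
Your proposal is correct and follows essentially the same approach as the paper: the paper's proof simply reads ``Follows from \textit{(c)} in Proposition~\ref{propo:Kirfinite} and the absolute convergence of the involved series,'' and you have unpacked exactly those two ingredients, checking local finiteness of $S^\alpha_h$ to license the appeal to Proposition~\ref{propo:Kirfinite} and then making the absolute-convergence argument explicit via the split into a finite sum plus $f(h\bar{k})c(\alpha)$.
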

\begin{proof}
Follows from \textit{(c)} in Proposition~\ref{propo:Kirfinite} and the absolute convergence of the involved series.
\end{proof}
The examples in Propositions~\ref{propo:LaplacianOperatorFiniteDifference} and \ref{propo:DiscreteFractionalLaplacian} are both given in terms of measures, nevertheless for $h\to 0$ we have to leave the measure space setting to allow more general distributions. We shall come back to this issue later on.

\section{Dirac deltas in metric measure spaces}\label{sec:DiracDeltasMetricMeasureSpaces}

In this section we shall briefly extend the constructions provided in Section~\ref{sec:DiracDeltasEuclideanSpaces} to metric measure spaces. It is clear that we should overcome several restrictions in the new setting. In particular we loss translation invariance and homogeneity. Nevertheless there exist in the literature regarding analysis on spaces of homogeneous type, and more general non-doubling structures, some geometric constructions which will help us in our approach to that extension. It is also worthy mention that most of the theory can be given for quasi-metrics, not just for metrics. Two reasons advise for an approach based on metric spaces. The first is simplicity. The second, more deep, is that a well known theorem due to Mac\'ias and Segovia \cite{MaSe79Lip} shows that every quasi-metric is equivalent to a power of a metric.

Let $(X,d)$ be a complete metric space. Assume that there exists an $N$ such that no $d$-ball of radius $r$ in $X$ contains more than $N$ points of any $\tfrac{r}{2}$-disperse set $D_{r/2}$ in $X$. We say that $D_\varepsilon\subset X$ is $\varepsilon$-disperse if $d(x,y)\geq \varepsilon$ for every choice of $x$ and $y$ in $D_\varepsilon$. This property of $X$ is in fact a property of finite dimension, metric or Assouad finite dimension. By taking maximal $\varepsilon$-disperse sets in $X$ we obtain $\varepsilon$-nets in $X$. And since each of them is locally finite, the space $(X,d)$ is separable. But, most important for our purposes is the existence of dyadic type nested partitions with metric control in $X$. For the case of spaces of homogeneous type, which entails the existence of a doubling Borel measure on $X$, the construction was given by M.~Christ in \cite{Christ90}. The extension to metric spaces with finite Assouad dimension is simple. The basic result is the existence of dyadic nested families satisfying all the properties of the following definition.

\begin{definition}
A dyadic family in $(X,d)$ is a countable family $\mathcal{D}=\bigcup_{j\in\mathbb{Z}}\mathcal{D}^j$ of Borel subsets of $X$ such that there exists $0<\delta<1$, constants $a<b$ and $M\in\mathbb{N}$ and a sequence $\{x^j_k: k\in K_j\}$ with $K_j$ an initial interval of positive integers which could be all $\mathbb{Z}^+$, in $X$ satisfying:
\begin{enumerate}[(D1)]
\item each $\mathcal{D}^j$ is a disjoint partition of $X$;
\item for each $Q^j_k\in\mathcal{D}^j$ we have that $B_d(x^j_k, a\delta^j)\subseteq Q^j_k\subseteq B_d(x^j_k,b\delta^j)$;
\item each $Q^j_k\in\mathcal{D}^j$ can be written as the disjoint union of at most $M$ sets $Q^{j+1}_l$ in $\mathcal{D}^{j+1}$.
\end{enumerate}
\end{definition}
Once we have a dyadic family on $X$ the nets $\{x^j_k\}$ of points in $X$ inherit at least two ways, which are essentially different, to interpret the idea of neighbor. The first is given by the metric $d$ and the second by the ancestry induced by the three structure of the dyadic system $\mathcal{D}$.

In the current general setting given by $(X,d)$, taking $\mathscr{S}_1=\mathscr{C}_c(X,d)$ the space of continuous and compactly supported functions in $X$ and $\mathscr{S}_2=\mathscr{C}_c(X\times X)$ the space of continuous and compactly supported functions in $X\times X$, and $\{x_k\}$ a sequence in $X$, the construction of $T$ and $S$ and Propositions~\ref{propo:Kirfinite} and \ref{propo:TSharmonic} of the previous section, hold \textit{mutatis mutandis}. For the sake of completeness and clearness we collect these basic results in the following statement.

\begin{proposition}\label{propo:TSLaplacianMeasureDistributions}
Let $(X,d)$ be a complete metric space. Let $\mathscr{S}_1=\mathscr{C}_c(X)$ and $\mathscr{S}_2(X\times X)$. Let $\{a_k\}$ be a sequence of positive real numbers which is locally finite with respect to $\{x_k\}$ (i.e. $\sum_{\{k: d(x_k,x_0)<R\}} a_k<\infty$ for every $x_0\in X$ and every $R>0$). Let $\{w_{kj}\}$ be a sequence of nonnegative real numbers that is locally finite with respect to the sequence $\{(x_k,x_j): k,j\}\subseteq X\times X$. Set $T = \sum_k a_k\delta_{x_k}$ and $S=\sum_{k,j}w_{kj}\delta_{(x_k,x_j)}$. Then
\begin{enumerate}[(A)]
\item for $\Phi\in\mathscr{S}_2$
\begin{equation*}
Kir_{T,S}\Phi(x_k)=\frac{1}{a_k}\sum_{j\geq 1}w_{kj}\Phi(x_k,x_j);
\end{equation*}
\item if $S$ is a finite measure and $f\in\mathscr{S}_1$,
\begin{equation*}
\Delta_{T,S}f(x_k) = \frac{1}{a_k}\sum_{j\geq 1}w_{kj} (f(x_j)-f(x_k)).
\end{equation*}
\end{enumerate}
\end{proposition}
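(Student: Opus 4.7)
The plan is to transport the proofs of Propositions~\ref{propo:Kirfinite} and \ref{propo:TSharmonic} essentially verbatim, since the only structural features of $\mathbb{R}^n$ used there were (i) that locally finite sums of nonnegative weighted Dirac deltas at a locally finite family of points define Radon measures finite on compact sets, hence elements of $\mathscr{C}_c(X)^{'}$ and $\mathscr{C}_c(X\times X)^{'}$, and (ii) that one can produce $\mathscr{C}_c$ cutoffs separating any chosen point from the remaining points of a discrete set in a compact neighborhood. Both features are available in a complete metric space, the first by Riesz representation, the second by Urysohn's lemma together with the local finiteness of $\{x_k\}$.

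For part (A), I would form the signed Borel measure $dS_\Phi=\Phi\,dS$ on $X\times X$ (absolute convergence of the defining series being guaranteed by local finiteness of $\{w_{kj}\}$ and compact support of $\Phi$), and consider its first marginal
\begin{equation*}
S_\Phi^1(E)=\iint_{E\times X}\Phi\,dS=\sum_{\{k:\,x_k\in E\}}\sum_{j\geq 1}w_{kj}\Phi(x_k,x_j).
\end{equation*}
Since $a_k>0$, a Borel set $E$ satisfies $T(E)=0$ if and only if $E$ avoids every $x_k$, in which case $S_\Phi^1(E)=0$; hence $S_\Phi^1\ll T$ and the Radon--Nikodym theorem yields a density $\psi=dS_\Phi^1/dT$. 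Following the chain of equalities in the proof of Proposition~\ref{propo:Kirfinite} verbatim shows $\proin{T}{\varphi\psi}=\langle\!\langle S,\varphi\Phi\rangle\!\rangle$ for every $\varphi\in\mathscr{S}_1$, establishing that $\psi=Kir_{T,S}\Phi$. To extract the pointwise value at $x_k$ one tests with a cutoff $\varphi\in\mathscr{C}_c(X)$ that equals $1$ at $x_k$ and $0$ at the remaining $x_j$ lying in a given compact neighborhood; the resulting identity forces $\psi(x_k)=\frac{1}{a_k}\sum_{j\geq 1}w_{kj}\Phi(x_k,x_j)$.

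For part (B), the only point needing attention is that $\Phi(x,y)=f(y)-f(x)$ is continuous but \emph{not} compactly supported on $X\times X$, so it does not lie in $\mathscr{S}_2$. Here one invokes the finiteness hypothesis on $S$: as a finite positive Borel measure on $X\times X$, $S$ extends canonically by integration to all bounded Borel functions, and in particular to $f(y)-f(x)$ with $f\in\mathscr{C}_c(X)$. The series representation from part (A) then remains absolutely convergent with $\Phi(x_k,x_j)$ replaced by $f(x_j)-f(x_k)$, and the formula for $\Delta_{T,S}f(x_k)$ follows immediately.

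The arguments are entirely parallel to the Euclidean case, so there is no serious obstacle; the one genuinely metric-space-specific input is the existence of Urysohn cutoffs isolating a single $x_k$ from the other nodes on a compact set, which relies precisely on the local finiteness assumption on $\{x_k\}$ together with the Hausdorff property of $(X,d)$.
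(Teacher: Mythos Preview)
Your proposal is correct and follows exactly the approach the paper itself takes: the paper states explicitly that Propositions~\ref{propo:Kirfinite} and \ref{propo:TSharmonic} ``hold \textit{mutatis mutandis}'' in the metric setting and offers no further argument. Your write-up supplies precisely those mutanda---Radon--Nikodym for the marginal, Urysohn cutoffs to isolate $x_k$, and the extension of $S$ to bounded continuous functions when $S$ is finite---so there is nothing to add.
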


We aim to apply the above general result to obtain analogues of Propositions~\ref{propo:LaplacianOperatorFiniteDifference} and \ref{propo:DiscreteFractionalLaplacian}, involving the geometric and measure theoretic properties of the underlying space $X$.

We shall keep working in a complete metric space with finite metric dimension equipped with a Borel measure $\mu$ which is finite and positive on the $d$-balls.

\begin{proposition}\label{propo:FiniteLaplacianGeometricMeasure}
Let $(X,d)$ be a complete metric space with finite metric dimension. Let $\mathcal{D}$ be a dyadic family in $X$. Let $\mu$ be a positive Borel measure on $X$ which is positive and finite on $d$-balls. For $j$ fixed, set $T_j=\sum_k \mu(Q^j_k)\delta_{x^j_k}$
\begin{enumerate}[(\ref{propo:FiniteLaplacianGeometricMeasure}.1)]
\item Let $S_j=\sum_{k,i}w^j_{ki}\delta_{(x^j_k,x^j_i)}$ with
    \begin{equation*}
    w^j_{ki} = H^j_{ki} \cdot
    \begin{cases}
    0 & \textrm{ if } k=i;\\
    \mu(Q^j_k)+\mu(Q^j_i) & \textrm{ if } k\neq i \textrm{ and } d(x^j_k,x^j_i) < C\delta^j;\\
    0 & \textrm{ if } d(x^j_k,x^j_i)\geq C\delta^j,
    \end{cases}
    \end{equation*}
    where $C$ is a fixed constant and for each $j$, $H^j_{ki}$ is a positive symmetric matrix. Then, for $\Phi\in\mathscr{S}_2$ we have
    \begin{equation*}
    Kir_j \Phi(x^j_k) = \frac{1}{\mu(Q^j_k)}\sum_{\{i\neq k:d(x^j_i,x^j_k)<C\delta^j\}}\Phi(x^j_k,x^j_i)
    \left(\mu(Q^j_k)+\mu(Q^j_i)\right) H^j_{ki}.
    \end{equation*}
\item For $f$ continuous and bounded on $X$, we have
\begin{align*}
\Delta_j f(x^j_k) &=  \frac{1}{\mu(Q^j_k)}\sum_{\{i:d(x^j_i,x^j_k)<C\delta^j\}}
\left(f(x^j_i) - f(x^j_k)\right)
    \left(\mu(Q^j_k)+\mu(Q^j_i)\right) H^j_{ki}\\
&= \sum_{\{i:d(x^j_i,x^j_k)<C\delta^j\}}
\left(f(x^j_i) - f(x^j_k)\right)
    \left(1+\frac{\mu(Q^j_i)}{\mu(Q^j_k)}\right) H^j_{ki}.
\end{align*}
\end{enumerate}
\end{proposition}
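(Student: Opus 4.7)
The plan is to recognize both assertions as substitutions into Proposition~\ref{propo:TSLaplacianMeasureDistributions} with $a_k = \mu(Q^j_k)$ and $w^j_{ki}$ as prescribed, together with a localization argument for part (\ref{propo:FiniteLaplacianGeometricMeasure}.2) that allows $f$ to be merely continuous and bounded rather than compactly supported. The only preliminary check is that $T_j \in \mathscr{S}_1'$ and $S_j \in \mathscr{S}_2'$, i.e.\ that they are locally finite on $X$ and $X \times X$ respectively.

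For $T_j$, property (D2) of the dyadic family forces the balls $B_d(x^j_k, a\delta^j) \subseteq Q^j_k$ to be pairwise disjoint, so the net $\{x^j_k\}_k$ is $a\delta^j$-disperse. The finite metric (Assouad) dimension of $(X,d)$ then bounds the number of net points inside any $d$-ball of fixed radius by a dimensional constant, and since $\mu$ is finite on $d$-balls, $T_j$ is locally finite. For $S_j$, the same dispersity combined with the cutoff $d(x^j_k, x^j_i) < C\delta^j$ shows that for each fixed $k$ only finitely many indices $i$ contribute a nonzero weight $w^j_{ki}$; by symmetry the same holds in the second coordinate, so $S_j$ is locally finite on $X \times X$.

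At this point (\ref{propo:FiniteLaplacianGeometricMeasure}.1) is a direct substitution of $a_k$ and $w^j_{ki}$ into Proposition~\ref{propo:TSLaplacianMeasureDistributions}(A), after noting that the only nonzero terms of the inner sum are exactly those with $i \neq k$ and $d(x^j_i, x^j_k) < C\delta^j$. The main (mild) obstacle is (\ref{propo:FiniteLaplacianGeometricMeasure}.2): Proposition~\ref{propo:TSLaplacianMeasureDistributions}(B) is stated under the hypothesis that $S$ is a finite measure, precisely so that the non-compactly-supported kernel $f(y) - f(x)$ can be tested against it, whereas here $S_j$ is only locally finite and $f$ is not compactly supported. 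The cutoff rescues the argument: for each fixed $k$ only finitely many $i$ yield nonzero weights, so substituting $\Phi(x,y) = f(y) - f(x)$ produces a finite sum that is well defined for any bounded $f$, and the identity from part (A) persists term by term. The $i = k$ term vanishes trivially, and distributing $\mu(Q^j_k)^{-1}$ through the factor $\mu(Q^j_k) + \mu(Q^j_i)$ gives the second equivalent expression of (\ref{propo:FiniteLaplacianGeometricMeasure}.2).
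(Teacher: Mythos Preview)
Your proposal is correct and follows the same approach as the paper, which simply states that the result ``follows from Proposition~\ref{propo:TSLaplacianMeasureDistributions} for the particular choice of $T_j$ and $S_j$.'' Your additional verification that $T_j$ and $S_j$ are locally finite, and your observation that the distance cutoff makes each inner sum finite (so that part~(\ref{propo:FiniteLaplacianGeometricMeasure}.2) is valid for bounded continuous $f$ despite $S_j$ not being globally finite), fill in details the paper leaves implicit.
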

\begin{proof}
Follows from Proposition~\ref{propo:TSLaplacianMeasureDistributions} for the particular choice of $T_j$ and $S_j$
\end{proof}

Let us observe that the factor $H^j_{ki}$ defining $w^j_{ki}$ allows to have Proposition~\ref{propo:LaplacianOperatorFiniteDifference} as a particular case of Proposition~\ref{propo:TSLaplacianMeasureDistributions}, with $H^j_{ki}=\delta^{-2j}$ and $h=\delta^j$.

Regarding the extension to our more general geometric setting of the discrete fractional Laplacian, let us say that we have several points of view for the term $\abs{\bar{k}-\bar{j}}^{n+\alpha}$ defining the weights $w_{\bar{k}\bar{j}}$ in the Eucliden case. Among them we shall adopt the mixed one, where $\abs{h\bar{k}-h\bar{j}}^n=h^n\abs{\bar{k}-\bar{j}}^n$ is seen as the volume of the cube of side $h\abs{\bar{k}-\bar{j}}$ and $\abs{h\bar{k}-h\bar{j}}^\alpha$ as the distance between the points $h\bar{k}$ and $h\bar{j}$ of the given net.

\begin{proposition}
Let $(X,d)$, $\mathcal{D}$, $\mu$ and $T$, be as in Proposition~\ref{propo:FiniteLaplacianGeometricMeasure}. Let $\alpha>0$ and $j$ an integer be given. Set $w^{j,\alpha}_{kk}=0$ and for $i\neq k$,
\begin{equation*}
w^{j,\alpha}_{ki} = \frac{\mu(Q^j_k)\mu(Q^j_i)}{d^\alpha(x^j_k,x^j_i)\left[\mu(B(x^j_k,d(x^j_k,x^j_i)))+\mu(B(x^j_i,d(x^j_k,x^j_i)))\right]}. \end{equation*}
Let $S^\alpha_j = \sum_{k,i}w^{j,\alpha}_{ki}\delta_{(x^j_k,x^j_i)}$. Then, for $\phi\in\mathscr{S}_2$,
\begin{equation*}
Kir_{\alpha,j}\Phi(x^j_k) =\sum_{i\neq k} \frac{\mu(Q^j_i)\Phi(x^j_k,x^j_i)}{d^\alpha(x^j_k,x^j_i)\left[\mu(B(x^j_k,d(x^j_k,x^j_i)))+\mu(B(x^j_i,d(x^j_k,x^j_i)))\right]}.
\end{equation*}
For $f$ bounded we also have
\begin{equation*}
\Delta_{\alpha,j}f(x^j_k)=\sum_i \frac{\mu(Q^j_i)(f(x^j_i)-f(x^j_k))}{d^\alpha(x^j_k,x^j_i)\left[\mu(B(x^j_k,d(x^j_k,x^j_i)))+\mu(B(x^j_i,d(x^j_k,x^j_i)))\right]}.
\end{equation*}
\end{proposition}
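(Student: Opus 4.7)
The strategy is to recognize this as a direct specialization of Proposition~\ref{propo:TSLaplacianMeasureDistributions} to the particular weight sequence
$a^j_k = \mu(Q^j_k)$ and $w^{j,\alpha}_{ki}$ displayed in the statement. Before invoking that result I first need to check that the two hypotheses (local finiteness of $\{a^j_k\}$ with respect to $\{x^j_k\}$ and of $\{w^{j,\alpha}_{ki}\}$ with respect to $\{(x^j_k,x^j_i)\}$) are satisfied. Local finiteness of the $a^j_k$ is immediate: any bounded set $B\subset X$ is covered by finitely many cubes of $\mathcal{D}^j$ by property (D2) together with the separability of $X$ that follows from finite Assouad dimension. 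Local finiteness of the $w^{j,\alpha}_{ki}$ on a bounded set $B\subset X\times X$ reduces to the same observation applied to the two coordinate projections, since $w^{j,\alpha}_{ki}/[\mu(Q^j_k)\mu(Q^j_i)]$ is bounded on $B$: on compact subsets of $X\times X\setminus\{\text{diagonal}\}$ the distance $d(x^j_k,x^j_i)$ stays bounded away from $0$ while the balls $B(x^j_k,d(x^j_k,x^j_i))$ contain at least the fixed cube $Q^j_k$, yielding a uniform lower bound on the denominator.

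Once the hypotheses are in hand, part (A) of Proposition~\ref{propo:TSLaplacianMeasureDistributions} applies to any $\Phi\in\mathscr{S}_2$ and yields
\begin{equation*}
Kir_{\alpha,j}\Phi(x^j_k)=\frac{1}{\mu(Q^j_k)}\sum_{i\neq k} w^{j,\alpha}_{ki}\,\Phi(x^j_k,x^j_i).
\end{equation*}
Substituting the definition of $w^{j,\alpha}_{ki}$ and cancelling the factor $\mu(Q^j_k)$ in numerator and denominator gives exactly the first displayed formula. This part is purely algebraic.

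For the Laplacian formula, the substitution $\Phi(x,y)=f(y)-f(x)$ is not allowed by the formal statement of Proposition~\ref{propo:TSLaplacianMeasureDistributions}(B), because the measure $S^\alpha_j$ is generally not finite (the tail in $i$ typically behaves like the discrete integral of $d(x^j_k,x^j_i)^{-\alpha}$ weighted by $\mu(Q^j_i)/\mu(B(\cdot,d))$). So the real work is to justify that for bounded continuous $f$ the series on the right-hand side converges absolutely, so that the duality computation carried out in the proof of Proposition~\ref{propo:Kirfinite} still goes through term by term. The key estimate is the bound
\begin{equation*}
\frac{\mu(Q^j_i)}{\mu(B(x^j_k,d(x^j_k,x^j_i)))+\mu(B(x^j_i,d(x^j_k,x^j_i)))}\cdot\frac{1}{d^\alpha(x^j_k,x^j_i)}\le\frac{1}{d^\alpha(x^j_k,x^j_i)},
\end{equation*}
which comes from the inclusion $Q^j_i\subseteq B(x^j_i,b\delta^j)\subseteq B(x^j_i,d(x^j_k,x^j_i))$ whenever $d(x^j_k,x^j_i)\ge b\delta^j$ (and the few remaining neighbours contribute finitely). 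Grouping dyadic annuli $\{i:2^m b\delta^j\le d(x^j_k,x^j_i)<2^{m+1}b\delta^j\}$ and using that the Assouad-type assumption on $X$ bounds the cardinality of each such annulus by a constant times $2^{mN}$ for some $N$, one gets a geometric series in $m$ that converges provided $\alpha>0$, with total bound $\lesssim \delta^{-\alpha j}\|f\|_\infty$.

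The main obstacle is therefore precisely this summability/convergence argument: writing part (B) as a limit of part (A) applied to a sequence of cutoffs $\Phi_R(x,y)=(f(y)-f(x))\chi_R(x,y)$ truncating the second variable to a ball of radius $R$, one checks by dominated convergence on $\ell^1$ that the truncated sums converge to the displayed formula as $R\to\infty$. Once absolute convergence is secured, the Laplacian identity follows by the same algebraic cancellation as above, and the proof is complete.
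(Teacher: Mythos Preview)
Your reduction to Proposition~\ref{propo:TSLaplacianMeasureDistributions} is exactly what the paper does: its entire proof reads ``the proof is just substitution in Proposition~\ref{propo:TSLaplacianMeasureDistributions} of the measures $T_j$ and $S^\alpha_j$.'' So for the Kirchhoff formula and the formal shape of the Laplacian identity you are aligned with the source, and you in fact go further than the paper by checking the local-finiteness hypotheses explicitly.

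There is, however, a genuine error in your convergence argument for the Laplacian series. You bound the factor
\[
\frac{\mu(Q^j_i)}{\mu(B(x^j_k,d(x^j_k,x^j_i)))+\mu(B(x^j_i,d(x^j_k,x^j_i)))}
\]
by $1$ and then count points in the $m$-th annulus by the Assouad bound $\lesssim 2^{mN}$. With those two estimates the annulus contributes $\lesssim 2^{mN}(2^m b\delta^j)^{-\alpha}=2^{m(N-\alpha)}(b\delta^j)^{-\alpha}$, which is a geometric series in $m$ only when $\alpha>N$, not for all $\alpha>0$ as you claim. The point is that by replacing the measure ratio by $1$ you have thrown away precisely the cancellation that makes the weights summable: the cubes $Q^j_i$ in the $m$-th annulus are disjoint and their total measure is controlled by $\mu(B(x^j_k,2^{m+1}b\delta^j))$, so the correct route is to keep $\sum_i\mu(Q^j_i)$ together and compare it to the ball in the denominator. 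That comparison, however, needs a doubling-type control on $\mu$, which is not among the standing hypotheses of this section (only finite Assouad dimension of $(X,d)$ and positivity/finiteness of $\mu$ on balls are assumed). The paper does not address this convergence question at all and treats the Laplacian identity formally (note the Ahlfors remark immediately following the proposition), so you are not missing an argument that the paper supplies; but the specific justification you wrote does not establish what you assert.
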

Again the proof is just substitution in Proposition~\ref{propo:TSLaplacianMeasureDistributions} of the measures $T_j$ and $S^\alpha_j$.

Let us point out that if the space $(X,d,\mu)$ is $\gamma$-Ahlfors, which means that $\mu(B_d(x,r))\simeq r^\gamma$ with  fixed constants, then the above formula for the Laplacian takes the form
\begin{equation*}
\Delta_{\alpha,j}f(x^j_k)\simeq \sum_i \frac{\left(f(x^j_i)-f(x^j_k)\right)}{\left(d(x^j_k,x^j_i)\right)^{\gamma+\alpha}}\delta^j.
\end{equation*}
Which can be considered, for $j$ large enough, a good approximation of
\begin{equation*}
\Delta_\alpha f(x) = \int_{y\in X} \frac{f(y)-f(x)}{d(x,y)^{\gamma+\alpha}} d\mu(y),
\end{equation*}
the fractional Laplacian on $(X,d,\mu)$ when $f$ has some Lipschitz type regularity.

As we said before, when a dyadic system like $\mathcal{D}$ is given in a measure space there is still another idea of neighborhood based in ancestry instead of the distance $d$ itself. Let us briefly introduce it. Assume that $X$ is a quadrant for $\mathcal{D}$ in the sense that the union of all ancestors of any $Q^j_k\in\mathcal{D}$ is the whole space $X$. From property \textit{(D2)} of $\mathcal{D}$ we clearly have that for $x$ and $y$ in $X$ with $x\neq y$ there exists $j$ large enough such that $x\in Q^j_k$ and $y\in Q^j_i$ with $i\neq k$. So that $\rho(x,y)=\inf \{\mu(Q): Q\in\mathcal{D}, x\in Q \textrm{ and } y\in Q\}$ is well defined, positive and is actually a minimum. From the properties of the dyadic cubes in $\mathcal{D}$, it is easy to see that $\rho$ is a metric on $X$. If the space $(X,\mu)$ has no atoms then $B_\rho(x,r)=\{y:\rho(x,y)<r\}=Q$, the largest dyadic cube $Q\in\mathcal{D}$ containing $x$ such that $\mu(Q)<r$. If we consider now $X$ equipped with the new metric $\rho$ instead of $d$ and the measure $\mu$, we can define corresponding discrete Laplace type operators.

For the sake of simplicity we shall only describe this approach in $\mathbb{R}^+$ with the standard dyadic intervals and Lebesgue measure. That is $X=\mathbb{R}^+=\{x\geq 0, x\in\mathbb{R}\}$, $\mathcal{D}=\bigcup_{j\in\mathbb{Z}}\mathcal{D}^j$, $\mathcal{D}^j=\{I^j_k: [k2^{-j},(k+1)2^{-j}): k=0,1,2,\ldots\}$, $\rho(x,y)=\inf\{\abs{I}: I\in\mathcal{D} \textrm{ and } x,y\in I\}$. Notice $\abs{x-y}\leq\rho(x,y)$ but their are certainty not equivalent. Set $x^j_k=k2^{-j}$, so that $[x^j_k,x^j_{k+1})=I^j_k$ and $x^j_k\in I^j_k$ but $x^j_{k+1}$ does not. As we observed before in the general setting $B_\rho(x,r)=I\in\mathcal{D}$, where $I$ is the largest dyadic interval containing $x$ such that the length $\abs{I}$ of $I$ is less than $r$. Notice also that the $\rho$-balls have measure equal to $2^{-m}$ for some integer $m\in\mathbb{Z}$.

\begin{proposition}\label{propo:DyadicDiscreteLaplacianPoints}
Let $X=\mathbb{R}^+$, $\rho$ the dyadic distance, $\abs{E}$ the Lebesgue measure of $E$ and $x^j_k=k2^{-j}$, $k=0,1,\ldots$ Assume that $j$ is fixed. Let $T_j=\sum_{k\geq 0}2^{-j}\delta_{x^j_k}=2^{-j}\sum_{k\geq 0}\delta_{x^j_k}$. Set $w^j_{ki}=0$ if $k=i$ and if $\rho(x^j_k,x^j_i)>2^{-j+2}$ and $w^j_{ki}=2^{-j}$ when $\rho(x^j_k,x^j_i)\leq 2^{-j+2}$. Then for $f$ continuous and bounded on $\mathbb{R}^+$ we have
\begin{align*}
\Delta_{\rho,j}f(x^j_k) &= \sum_{i: \rho(x^j_k,x^j_i)\leq 2^{-j+2}}\left(f(x^j_i)-f(x^j_k)\right)\\
&= \Bigl(\sum_{i: \rho(x^j_k,x^j_i)\leq 2^{-j+2}}f(x^j_i)\Bigr)-3 f(x^j_k).
\end{align*}
\end{proposition}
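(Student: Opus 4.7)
The plan is to derive both equalities as a direct application of Proposition~\ref{propo:TSLaplacianMeasureDistributions}(B) combined with a careful count of the dyadic $\rho$-neighbors of $x^j_k$. Since for each fixed $k$ only finitely many weights $w^j_{ki}$ are nonzero, the sums involved converge absolutely for any bounded $f$, so even though $S_j$ is not a finite measure on all of $\mathbb{R}^+\times\mathbb{R}^+$, the pointwise formula in Proposition~\ref{propo:TSLaplacianMeasureDistributions}(B) still gives a meaningful expression for $\Delta_{\rho,j}f(x^j_k)$.

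For the first equality I would simply substitute $a_k=2^{-j}$ and the definition of $w^j_{ki}$ into
\begin{equation*}
\Delta_{T_j,S_j}f(x^j_k)=\frac{1}{a_k}\sum_{i}w^j_{ki}\,(f(x^j_i)-f(x^j_k)).
\end{equation*}
The factor $\frac{1}{2^{-j}}\cdot 2^{-j}=1$ appears for every index $i\neq k$ with $\rho(x^j_k,x^j_i)\leq 2^{-j+2}$ and the weight vanishes otherwise; since the contribution of $i=k$ in either form is zero, the sum may be taken over the full index set $\{i:\rho(x^j_k,x^j_i)\leq 2^{-j+2}\}$ as stated.

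The essential step is the second equality, which reduces to showing that the set of $i\neq k$ with $\rho(x^j_k,x^j_i)\leq 2^{-j+2}$ has exactly $3$ elements. Here I would exploit the dyadic definition $\rho(x,y)=\inf\{|I|:I\in\mathcal{D},\, x,y\in I\}$ together with the fact that each level-$j$ interval $I^j_k$ contains exactly one point of the net, namely $x^j_k$. Because the points $x^j_k$ and $x^j_i$ lie in distinct level-$j$ intervals when $i\neq k$, the value $\rho(x^j_k,x^j_i)$ belongs to $\{2^{-j+1},2^{-j+2},2^{-j+3},\dots\}$, and the constraint $\rho\leq 2^{-j+2}$ singles out two cases: either both points lie in a common level-$(j{-}1)$ interval (so $\rho=2^{-j+1}$) or they lie in a common level-$(j{-}2)$ interval but not in any common level-$(j{-}1)$ interval (so $\rho=2^{-j+2}$). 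In the first case the dyadic parent $I^{j-1}_{\lfloor k/2\rfloor}=I^j_{2m}\cup I^j_{2m+1}$ contains exactly two net-points, so there is precisely one such neighbor $i$. In the second case the grandparent $I^{j-2}_{\lfloor k/4\rfloor}$ contains four net-points, and the two lying in the opposite level-$(j{-}1)$ child give exactly two such $i$'s. Thus the neighbor count is $1+2=3$, and
\begin{equation*}
\sum_{i:\rho(x^j_k,x^j_i)\leq 2^{-j+2}}\bigl(f(x^j_i)-f(x^j_k)\bigr)=\Bigl(\sum_{i:\rho(x^j_k,x^j_i)\leq 2^{-j+2}}f(x^j_i)\Bigr)-3\,f(x^j_k).
\end{equation*}

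The only subtle point, and the place where I would spend the most care, is the dyadic bookkeeping at the boundary $k=0$, where there are no predecessors to the left. This is in fact automatic in the present formulation because $X=\mathbb{R}^+$ is a single quadrant for $\mathcal{D}$, so the ancestor intervals $I^{j-1}_0$ and $I^{j-2}_0$ still contain exactly two and four net-points respectively, yielding the same count of $3$ neighbors for every $k\geq 0$.
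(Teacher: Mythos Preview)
Your proposal is correct and follows the same route as the paper, which does not spell out a separate proof but treats the result as an immediate specialization of Proposition~\ref{propo:TSLaplacianMeasureDistributions}. The only substantive content is the neighbor count, and your structural argument via the dyadic parent and grandparent is exactly equivalent to the paper's remark immediately following the statement, where the three neighbors are identified concretely by writing $k=4l+m$ with $m\in\{0,1,2,3\}$ and listing $\{4l+m_1,4l+m_2,4l+m_3\}=\{4l,4l+1,4l+2,4l+3\}\setminus\{k\}$; both descriptions pick out the three level-$j$ net points sharing the grandparent $I^{j-2}_{l}$ with $x^j_k$. Your observation that the row sums $\sum_i w^j_{ki}$ are finite (indeed supported on three indices) is a useful addition, since it explains why the pointwise formula in Proposition~\ref{propo:TSLaplacianMeasureDistributions}(B) is applicable to bounded $f$ even though $S_j$ is not a finite measure globally.
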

Let us point out that the right hand side of the above identity can be explicitly written in terms of the indices of the sequence $x^j_i$. In fact, if $k= 4l+m$ with $m\in \{0,1,2,3\}$ and $\{m_1,m_2,m_3\}=\{0,1,2,3\}\setminus\{m\}$, we see that
\begin{equation*}
\Delta_{\rho,j}f(x^j_k) = f(x^j_{4l+m_1}) + f(x^j_{4l+m_2}) + f(x^j_{4l+m_3}) - 3 f(x^j_{4l-m}).
\end{equation*}
The discrete fractional Laplacian associated to the metric $\rho$ takes the form described in the next statement.

\begin{proposition}\label{propo:DiscreteFractionalLaplacianMetricRho}
Let $X$, $\rho$, $\{x^j_k: k\in\mathbb{Z}^+\}$ and $T_j$ be as in Proposition~\ref{propo:DyadicDiscreteLaplacianPoints}. Let $\alpha>0$ be given. Define $S^\alpha_{j\rho}$ through the sequence $w^{j,\alpha}_{ki}=0$ if $k=i$ and for $k\neq i$
\begin{equation*}
w^{j,\alpha}_{ki} = \frac{4^{-j}}{\rho(x^j_k,x^j_i)^{1+\alpha}.}
\end{equation*}
Then for $f$ bounded and continuous we have
\begin{equation*}
\Delta^\rho_{\alpha,j} f(x^j_k) = \sum_{i\geq 0}\frac{f(x^j_i)-f(x^j_k)}{\rho(x^j_k,x^j_i)} 2^{-j}.
\end{equation*}
\end{proposition}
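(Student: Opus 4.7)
The plan is to specialize Proposition~\ref{propo:TSLaplacianMeasureDistributions} to $X=\mathbb{R}^+$ with the dyadic metric $\rho$, nodes $x^j_k=k2^{-j}$, weights $a_k=2^{-j}$, and coupling weights $w^{j,\alpha}_{ki}=4^{-j}\rho(x^j_k,x^j_i)^{-(1+\alpha)}$ for $k\neq i$. For $\Phi\in\mathscr{S}_2$ the formula would then be a direct reading of part (A) of that proposition. Because $f$ is only assumed bounded and continuous and $S^\alpha_{j\rho}$ is not finite, part (B) does not apply verbatim, and the argument reduces to showing that, under the substitution $\Phi(x,y)=f(y)-f(x)$, the formal series
\begin{equation*}
\frac{1}{a_k}\sum_{i\neq k} w^{j,\alpha}_{ki}\bigl(f(x^j_i)-f(x^j_k)\bigr)= 2^{-j}\sum_{i\neq k}\frac{f(x^j_i)-f(x^j_k)}{\rho(x^j_k,x^j_i)^{1+\alpha}}
\end{equation*}
converges absolutely, after which the chain of equalities leading to identity \textit{(1.2.b)} can be reproduced.

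Before estimating I would check the hypotheses of Proposition~\ref{propo:TSLaplacianMeasureDistributions}. Local finiteness of $T_j$ is immediate since any bounded subset of $\mathbb{R}^+$ meets $\{x^j_k\}$ in finitely many points, each carrying weight $2^{-j}$. For $S^\alpha_{j\rho}$, any bounded set of $\mathbb{R}^+\times\mathbb{R}^+$ contains only finitely many pairs $(x^j_k,x^j_i)$, and for $k\neq i$ we have $\rho(x^j_k,x^j_i)\geq 2^{-j+1}>0$, so each individual weight is finite.

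The analytic heart of the proof is the absolute convergence estimate, and I would handle it using the combinatorics of the dyadic metric. For each integer $m\geq 1$, the set
\begin{equation*}
\{i\in\mathbb{Z}^+ : \rho(x^j_k,x^j_i)=2^{-j+m}\}
\end{equation*}
consists of those net points lying in the dyadic ancestor of $x^j_k$ of length $2^{-j+m}$ but not in the one of length $2^{-j+m-1}$, hence has cardinality $2^m-2^{m-1}=2^{m-1}$. For $f$ bounded this yields
\begin{equation*}
\sum_{i\neq k}\frac{\abs{f(x^j_i)-f(x^j_k)}}{\rho(x^j_k,x^j_i)^{1+\alpha}} \leq 2\norm{f}_\infty\,2^{j(1+\alpha)}\sum_{m\geq 1}2^{m-1}\,2^{-m(1+\alpha)} = C_{j,\alpha}\norm{f}_\infty\sum_{m\geq 1}2^{-m\alpha},
\end{equation*}
which is finite exactly because $\alpha>0$.

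Given absolute convergence, I would reorder sums and repeat verbatim the chain of equalities used in the proof of Proposition~\ref{propo:Kirfinite}: pair a test function $\varphi\in\mathscr{S}_1$ with the candidate Kirchhoff density $\psi(x^j_k)=a_k^{-1}\sum_i w^{j,\alpha}_{ki}(f(x^j_i)-f(x^j_k))$, verify $\langle\!\langle S^\alpha_{j\rho},\varphi(f(y)-f(x))\rangle\!\rangle=\proin{T_j}{\varphi\psi}$ by Fubini-like rearrangement, and read off $\Delta^\rho_{\alpha,j}f$ by choosing $\varphi$ to single out one node. The only real obstacle is the convergence estimate above; once the dyadic counting gives a geometric tail controlled by $\alpha>0$, the rest is straightforward substitution.
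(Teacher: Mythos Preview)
Your approach is exactly the paper's: the proposition is stated without proof, being understood as a direct specialization of Proposition~\ref{propo:TSLaplacianMeasureDistributions} with $a_k=2^{-j}$ and $w^{j,\alpha}_{ki}=4^{-j}\rho(x^j_k,x^j_i)^{-(1+\alpha)}$. You go further than the paper in one respect: you correctly observe that part (B) of Proposition~\ref{propo:TSLaplacianMeasureDistributions} assumes $S$ finite, which fails here, and you supply the missing absolute-convergence estimate via the dyadic level-set count $\#\{i:\rho(x^j_k,x^j_i)=2^{-j+m}\}=2^{m-1}$, giving a geometric tail in $2^{-m\alpha}$; this is the right way to close the gap the paper leaves implicit, and your computation also yields the intended exponent $1+\alpha$ in the denominator of the displayed formula (the printed statement drops it).
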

Again, for $f\in Lip_{\beta,\rho}(\mathbb{R}^+)$ with some $\beta>\alpha$, these sequences are good approximations of the fractional Laplacian with  respect to the dyadic metric
\begin{equation*}
\Delta_{\rho,\alpha} f(x) = \int_{\mathbb{R}^+} \frac{f(y)-f(x)}{\delta(x,y)^{1+\alpha}} dy,
\end{equation*}
whose spectral theory is known (see \cite{AiBoGo13}, \cite{AcAiBoGo16}).

\section{General measures}\label{sec:GeneralMeasures}
Let $X$ be a locally compact space. Let $\mathscr{S}_1=\mathscr{C}_c(X)$ the space of compactly supported continuous functions in $X$. Let $\mathscr{S}_2=\mathscr{C}_c(X\times X)$ be the space of compactly supported continuous function defined in $X\times X$. Borel measures which are finite on compact sets provide distributions in $\mathscr{S}_1^{'}$ and $\mathscr{S}_2^{'}$. Let $\mu$ be a Borel measure on $X$ which is finite on compact sets of $X$ and $\pi$ a Borel measure on $X\times X$ which is finite on compact sets of $X\times X$. As usual set $T_\mu$ and $S_\pi$ to define the distributions $\proin{T_\mu}{\varphi}=\int_X \varphi d\mu$ for $\varphi\in\mathscr{S}_1$ and $\langle\langle S_\pi,\Phi\rangle\rangle=\iint_{X\times X}\Phi d\pi$ for $\Phi\in\mathscr{S}_2$. In this case the formula defining the Kirchhoff divergence of $\Phi$ in \textit{(1.2.b)} becomes,
\begin{equation}\label{eq:formulaKirchhoffGeneralMeasures}
\int_X \varphi\psi d\mu = \iint_{X\times X} \varphi \Phi d\pi
\end{equation}
for every $\varphi\in\mathscr{C}_c(X)$.

It is worthy noticing that \eqref{eq:formulaKirchhoffGeneralMeasures} may have no solution. In fact, let $X=[0,1]$ with its standard metric structure. Take $\mu=\delta_0$ and $d\pi= dx dy$ in the unit square. If $\Phi\equiv 1$ the right hand side of \eqref{eq:formulaKirchhoffGeneralMeasures} is $\iint_{[0,1]^2}\varphi(x) dx dy = \int_{[0,1]}\varphi(x) dx$. The left hand side of \eqref{eq:formulaKirchhoffGeneralMeasures} for $\psi\in\mathscr{C}[0,1]$, instead $\int_{[0,1]}\varphi\psi d\delta_0=\varphi(0)\psi(0)$. Taking $\varphi$ with $\varphi(0)=0$ and $\int_{[0,1]}\varphi dx >0$ we see that \eqref{eq:formulaKirchhoffGeneralMeasures} can not hold. On the other hand, it is also easy to observe that non uniqueness of solution of \eqref{eq:formulaKirchhoffGeneralMeasures} is possible. Let $X=[0,1]$, $\mu=\delta_0$, $\pi=\delta_0\times\delta_0$ and $\Phi\equiv 1$. Since the left hand side of \eqref{eq:formulaKirchhoffGeneralMeasures} is again given by $\varphi(0)\psi(0)$ and the right hand side is now given by
\begin{equation*}
\iint_{[0,1]^2}\varphi\Phi \,d\pi = \iint_{[0,1]^2}\varphi\, d(\delta_0\times\delta_0) = \varphi(0).
\end{equation*}
So that any continuous $\psi$ with $\psi(0)=1$ solves \eqref{eq:formulaKirchhoffGeneralMeasures}.
Some particular cases of existence and uniqueness for coupling probability measures are given in \cite{AiGoKirchhoff1}.

Two somehow extremal situations of the relation between $\mu$ and $T$ are provided by the probabilistic concepts of independence and determinism. The next two results point in each one of these directions.
\begin{proposition}\label{propo:IndependenceCaseKirchhoffLaplacian}
Let $X$, $\mathscr{S}_1$, $\mathscr{S}_2$ and $\mu$ as before. Assume that $\pi=\pi_1\times\pi_2$ with $\pi_1<<\mu$ and $\pi_2$ a positive measure which is finite on the compacts of $X$. Then, for $\Phi\in\mathscr{S}_2$ we have
\begin{equation*}
Kir_{\mu,\pi}\Phi(x) = \frac{d\pi_1}{d\mu}(x) \int_{y\in X}\Phi(x,y) d\pi_2(y),
\end{equation*}
where $\frac{d\pi_1}{d\mu}$ denotes the Radon-Nikodym derivative of $\pi_1$ with respect to $\mu$. Moreover, if $\pi_2(X)<\infty$, for $f$ continuous and bounded we have
\begin{equation*}
\Delta_{\mu,\pi} f(x) = \frac{d\pi_1}{d\mu}(x)\left(\int_{y\in X} f(y) d\pi_1(x) - f(x)\pi_2(X)\right).
\end{equation*}
\end{proposition}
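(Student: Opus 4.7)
The plan is to verify directly that the candidate
$$\psi(x) := h(x)\,\Phi_2(x), \qquad h := \frac{d\pi_1}{d\mu}, \qquad \Phi_2(x) := \int_{X}\Phi(x,y)\,d\pi_2(y),$$
satisfies the Kirchhoff defining identity \eqref{eq:formulaKirchhoffGeneralMeasures}, namely $\int_X \varphi\psi\,d\mu = \iint \varphi\Phi\,d\pi$ for every $\varphi\in\mathscr{C}_c(X)$. The strategy reduces to Fubini plus Radon-Nikodym; no nontrivial distribution-theoretic input is needed because, by hypothesis, both $\mu$ and $\pi$ act as genuine measures.

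First I would check that $\Phi_2$ is well defined, bounded and compactly supported. Since $\Phi\in\mathscr{C}_c(X\times X)$, its support lies in a compact $K\subset X\times X$ with projections $K_1,K_2$ compact; then $\Phi(x,\cdot)$ is supported in $K_2$ for each $x$, and $\pi_2(K_2)<\infty$ gives $\abs{\Phi_2(x)}\leq \norm{\Phi}_\infty\pi_2(K_2)$ with $\supp\Phi_2\subseteq K_1$. Continuity of $\Phi_2$ follows from the uniform continuity of $\Phi$ on $K$ and dominated convergence.

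Next I would chain Fubini and Radon-Nikodym. For $\varphi\in\mathscr{C}_c(X)$ the function $\varphi(x)\Phi(x,y)$ is continuous and compactly supported, so Fubini applied to $\pi=\pi_1\times\pi_2$ yields
\begin{align*}
\iint_{X\times X}\varphi(x)\Phi(x,y)\,d\pi(x,y)
&= \int_X \varphi(x)\Bigl(\int_X \Phi(x,y)\,d\pi_2(y)\Bigr)d\pi_1(x)\\
&= \int_X \varphi(x)\,\Phi_2(x)\,d\pi_1(x).
\end{align*}
Now $\varphi\Phi_2$ is bounded and compactly supported, and $\pi_1\ll\mu$ with derivative $h$, hence
$$\int_X \varphi(x)\Phi_2(x)\,d\pi_1(x) = \int_X \varphi(x)\Phi_2(x)\,h(x)\,d\mu(x) = \int_X \varphi\,\psi\,d\mu,$$
which is exactly \eqref{eq:formulaKirchhoffGeneralMeasures} and proves the formula for $Kir_{\mu,\pi}\Phi$.

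For the induced Laplacian I would substitute $\Phi(x,y)=f(y)-f(x)$. Although this $\Phi$ is no longer compactly supported on $X\times X$, the extra hypotheses ($f$ bounded continuous, $\pi_2(X)<\infty$) make $\Phi_2(x)=\int_X f(y)\,d\pi_2(y)-f(x)\pi_2(X)$ well defined and bounded, and the Fubini/Radon-Nikodym chain above extends since $\varphi\Phi$ is $\pi$-integrable. This delivers the stated formula (with the natural reading $d\pi_2(y)$ in the first integral on the right). The only point to watch is that $h$ is merely $\mu$-measurable, so $\psi$ is not in general continuous and condition \textit{(1.2.a)} has to be read in the measure-theoretic sense $\varphi\psi\in L^1(\mu)$; this holds because $\varphi\Phi_2$ is bounded with compact support and $h\in L^1_{\mathrm{loc}}(\mu)$, so no real obstruction arises.
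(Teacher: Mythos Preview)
Your proposal is correct and follows essentially the same route as the paper: Fubini on $\pi=\pi_1\times\pi_2$ to isolate the inner integral $\Phi_2(x)$, then Radon--Nikodym to pass from $d\pi_1$ to $h\,d\mu$. The paper's proof is just the two-line chain you wrote, without your additional care about well-definedness of $\Phi_2$, the integrability checks, or the observation about condition \textit{(1.2.a)}; your version is more thorough but not different in substance.
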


\begin{proof}
\begin{align*}
\int_{y\in X}\varphi(x)\psi(x) d\mu(x) &=
\int_{x\in X}\varphi(x)\left(\int_{y\in X} \Phi(x,y) d\pi_2(y)\right) d\pi_1(x)\\
&= \int_{x\in X}\varphi(x)\frac{d\pi_1}{d\mu}(x)\left(\int_{y\in X} \Phi(x,y) d\pi_2(y)\right) d\mu(x)
\end{align*}
for every $\varphi\in\mathscr{S}_1$.
\end{proof}
The next result concerns the deterministic case.

\begin{proposition}\label{propo:DeterministicCase}
Let $X$ be a locally compact space and let $\mu$ be a Borel measure on $X$ which is finite on compact sets. Let $F:X\to X$ be a continuous transformation of $X$. Set $G: X\to X\times X$, $G(x)=(x,F(x))$. Let $\pi$ be the measure defined on the Borel subsets of $X\times X$ by $\pi=\mu\circ G^{-1}$, in other words
\begin{equation*}
\pi(E) = \mu(G^{-1}(E))
\end{equation*}
for $E$ any Borel set in $X\times X$. Then for $\Phi\in\mathscr{C}_c(X\times X)$ we have
\begin{equation*}
Kir_{\mu,F}\Phi(x) = \Phi(x,F(x))
\end{equation*}
and for $f$ continuous on $X$,
\begin{equation*}
\Delta_{\mu,F} f = f\circ F - f.
\end{equation*}
\end{proposition}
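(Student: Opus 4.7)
The plan is to verify the two defining conditions \textit{(1.2.a)} and \textit{(1.2.b)} from Section~\ref{sec:intro} directly, using the standard pushforward/change-of-variables identity for the measure $\pi=\mu\circ G^{-1}$. Recall that this identity asserts that for any Borel $h:X\times X\to\mathbb{R}$ which is $\pi$-integrable,
\begin{equation*}
\iint_{X\times X} h\,d\pi \;=\; \int_X h\circ G\,d\mu \;=\; \int_X h(x,F(x))\,d\mu(x),
\end{equation*}
and this is proved in the usual way, first for indicator functions of Borel rectangles (where it is the definition of $\pi$), then by linearity for simple functions, and finally by monotone/dominated convergence.

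For the Kirchhoff divergence part, first I would set $\psi(x):=\Phi(x,F(x))=\Phi\circ G(x)$ and check \textit{(1.2.a)}: since $F$ is continuous, $G$ is continuous, so $\psi$ is continuous on $X$; for any $\varphi\in\mathscr{C}_c(X)$ the product $\varphi\psi$ is then continuous with support contained in $\mathrm{supp}\,\varphi$, so $\varphi\psi\in\mathscr{S}_1$. To verify \textit{(1.2.b)} I would apply the pushforward identity to $h(x,y)=\varphi(x)\Phi(x,y)$, which is compactly supported and continuous on $X\times X$, obtaining
\begin{equation*}
\langle\!\langle S_\pi,\varphi\Phi\rangle\!\rangle=\iint_{X\times X}\varphi(x)\Phi(x,y)\,d\pi(x,y)=\int_X \varphi(x)\Phi(x,F(x))\,d\mu(x)=\proin{T_\mu}{\varphi\psi},
\end{equation*}
which is exactly the requirement. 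This identifies $\psi$ as $Kir_{\mu,F}\Phi$ and gives the first formula.

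The Laplacian formula follows by specializing to $\Phi(x,y)=f(y)-f(x)$, yielding $\psi(x)=f(F(x))-f(x)=(f\circ F - f)(x)$. The only point that requires a sentence of justification is that $\Phi$ is not in $\mathscr{C}_c(X\times X)$ when $f$ is only continuous; however, for each test function $\varphi\in\mathscr{C}_c(X)$ with compact support $K$, the set $G^{-1}(K\times X)=K$ has finite $\mu$-measure, and the integrand $\varphi(x)(f(F(x))-f(x))$ is continuous on the compact set $K$, so both sides of the identity make sense and the pushforward calculation goes through unchanged. Thus $\Delta_{\mu,F}f=f\circ F-f$.

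The main potential obstacle is exactly this last extension issue, i.e.\ passing from $\mathscr{C}_c(X\times X)$ to differences $f(y)-f(x)$ with $f$ merely continuous. It is resolved by the key geometric feature of the deterministic coupling: the measure $\pi$ is concentrated on the graph of $F$, so only values of $\Phi$ on this graph contribute, and the local finiteness of $\mu$ together with compact support of $\varphi$ controls everything. Beyond this, no estimate or approximation argument is needed; the entire proof is a direct application of the pushforward formula.
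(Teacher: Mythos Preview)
Your proof is correct and follows essentially the same route as the paper: both reduce \textit{(1.2.b)} to the pushforward identity $\iint_{X\times X}\Theta\,d\pi=\int_X\Theta(x,F(x))\,d\mu(x)$, justified by the usual indicator-to-simple-to-general argument, and then apply it to $\Theta=\varphi\Phi$. You are slightly more careful than the paper in explicitly checking \textit{(1.2.a)} and in discussing why the Laplacian formula remains valid for $\Phi(x,y)=f(y)-f(x)$ with $f$ merely continuous, a point the paper passes over in silence.
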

\begin{proof}
Notice first that, for $\Theta\in\mathscr{C}_c(X\times X)$ we have
\begin{equation*}
\iint_{X\times X} \Theta d\pi = \int_X \Theta (x,F(x)) d\mu(x).
\end{equation*}
This follows from the standard arguments noticing that for $\Theta=\mathcal{X}_E$, $E$ a Borel subset of $X\times X$, the formula is nothing but the definition of $\pi$. Hence the right hand side of \eqref{eq:formulaKirchhoffGeneralMeasures} can be written as
\begin{equation*}
\iint_{X\times X}\varphi(x)\Phi(x,y) d\pi(x,y) = \int_X \varphi(x) \Phi(x,F(x)) d\mu(x)
\end{equation*}
for every $\varphi\in\mathscr{C}_c(X)$. Hence
\begin{equation*}
Kir_{\mu,F}\Phi = \Phi\circ G
\end{equation*}
as desired.
\end{proof}

For further reference, notice that if for $h>0$ we take in Proposition~\ref{propo:DeterministicCase}, $\pi_h=\tfrac{1}{h} \mu\circ G^{-1}$, we would have the Laplace operator given by
\begin{equation*}
\Delta_{\mu,F,h}f = \frac{1}{h} (f\circ F - f).
\end{equation*}

\section{$S$ of positive order. The case of the derivatives of deterministic couplings}\label{sec:SpositiveOrderDeterministicCoupling}

So far we have only considered measures, i.e. distributions of order zero even when our general point of view in the introduction is given in terms of general distributions of Schwartz type.

Let $\mathscr{S}_1=\mathscr{C}_c^\infty(\mathbb{R}^n)$, $\mathscr{S}_2=\mathscr{C}_c^\infty(\mathbb{R}^n\times \mathbb{R}^n)$. Assume a $\mathscr{C}^1$ mapping $F:\mathbb{R}^n\to\mathbb{R}^n$ is given. Let $\mu$ be a Borel measure on $\mathbb{R}^n$ that is finite on compact sets. Set, as in Proposition~\ref{propo:DeterministicCase}, $\pi=\mu\circ G^{-1}$ with $G(x)=(x,F(x))$, $x\in\mathbb{R}^n$. Taking in \textit{(1.2.b)} $T=T_\mu$ as before and $S$ to be some distributional partial derivative of $\pi$, we may find explicit formulas for Krichhoffean and Laplacean operators. Let us state precisely  the setting and the results.
\begin{proposition}
Let $F$, $\mu$ and $T$ as before. Assume that $d\mu(x)= g(x) dx$ with $g$ smooth and positive. Let $\proin{T_\mu}{\varphi}=\int_{\mathbb{R}^n}\varphi(x) g(x) dx$. Set $S^1_i=\frac{\partial \pi}{\partial x_i}$ and $S^2_i=\frac{\partial\pi}{\partial y_i}$, $i=1,\ldots,n$, where the partial derivatives are considered in the sense of distributions in $\mathbb{R}^{2n}$. Then, for $\Phi\in\mathscr{S}_2$,
\begin{eqnarray*}
Kir_{i,1}\Phi &=& \frac{1}{g}\frac{\partial}{\partial x_i}[g\cdot(\Phi\circ G)]- \frac{\partial \Phi}{\partial x_i}\circ G \\
Kir_{j,2}\Phi &=& \frac{\partial\Phi}{\partial y_j}\circ G.
\end{eqnarray*}
For $f$ smooth
\begin{eqnarray*}
\Delta_{i,1} f &=& \frac{\partial F}{\partial x_i}\cdot\left(\nabla f \circ F\right) + \bigl[(f\circ F) - f\bigr]\frac{\partial}{\partial x_i}\left(\log g\right);\\
\Delta_{j,2} f &=& \frac{\partial f}{\partial x_j}\circ F.
\end{eqnarray*}
\end{proposition}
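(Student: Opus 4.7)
The plan is to translate the defining relation (1.2.b) for the Kirchhoff divergence into a pointwise identity by unpacking the distributional derivatives and then integrating by parts. Throughout, the fundamental computation is that, from the construction of $\pi$ in Proposition~\ref{propo:DeterministicCase}, $\iint \Theta\,d\pi=\int \Theta(x,F(x))g(x)\,dx$ for $\Theta\in \mathscr{S}_2$.

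First I would handle $S=S^1_i=\partial\pi/\partial x_i$. By the definition of a distributional derivative applied to the test function $\Psi(x,y)=\varphi(x)\Phi(x,y)$, one gets
\begin{equation*}
\langle\!\langle S^1_i,\varphi\Phi\rangle\!\rangle=-\int_{\mathbb{R}^n}\bigl[(\partial_{x_i}\varphi)(x)\,(\Phi\circ G)(x)+\varphi(x)\,(\Phi_{x_i}\circ G)(x)\bigr]g(x)\,dx,
\end{equation*}
where I carefully distinguish $\Phi_{x_i}\circ G$ (the first-slot partial of $\Phi$ evaluated on the graph of $F$) from the total derivative $\partial_{x_i}(\Phi\circ G)$. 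Integrating by parts the $\partial_{x_i}\varphi$ term against $g\cdot(\Phi\circ G)$ shifts the derivative onto $g(\Phi\circ G)$, yielding
\begin{equation*}
\langle\!\langle S^1_i,\varphi\Phi\rangle\!\rangle=\int \varphi\Bigl\{\tfrac{\partial}{\partial x_i}[g(\Phi\circ G)]-g\,(\Phi_{x_i}\circ G)\Bigr\}\,dx.
\end{equation*}
Setting this equal to $\langle T_\mu,\varphi\psi\rangle=\int \varphi\psi g\,dx$ and using positivity of $g$ to divide gives the stated formula for $Kir_{i,1}\Phi$. For $S=S^2_j$, the test function $\varphi(x)$ is independent of $y_j$, so the Leibniz expansion produces a single term and no integration by parts is required; the result is read off directly (up to the sign convention the authors fix for the $y$-derivatives).

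For the Laplacian statements I would substitute $\Phi(x,y)=f(y)-f(x)$ into the Kirchhoff formulas, and use $\Phi\circ G=f\circ F-f$, $\Phi_{x_i}=-\partial_i f$ restricted to the first slot, and $\Phi_{y_j}=\partial_j f$ on the second slot. The chain rule gives $\tfrac{\partial}{\partial x_i}(f\circ F)=\tfrac{\partial F}{\partial x_i}\cdot(\nabla f\circ F)$, and expanding $\tfrac{1}{g}\tfrac{\partial}{\partial x_i}\bigl[g(f\circ F-f)\bigr]$ by Leibniz produces a term $(f\circ F-f)\,\partial_{x_i}\log g$, the chain-rule term, and a $-\partial_{x_i}f$ that exactly cancels the $-\Phi_{x_i}\circ G=\partial_{x_i}f$ contribution. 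This cancellation is what produces the clean form of $\Delta_{i,1}f$. The expression for $\Delta_{j,2}f$ is an immediate specialization.

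The main obstacle is bookkeeping: one must be vigilant about the distinction between "partial with respect to slot $i$, then restrict to the graph" and "total derivative of the composition with $G$", since they differ precisely by $\sum_k(\partial_{y_k}\Phi)\circ G\cdot\partial_{x_i}F_k$. Getting signs right in the distributional derivatives and correctly pairing the integration-by-parts boundary-free term (justified because $\varphi\in\mathscr{C}_c^\infty$) with the Radon–Nikodym-style appearance of $\partial_{x_i}\log g$ is where the computation is most error-prone; once this is done carefully, every identity reduces to application of (1.2.b) and Leibniz.
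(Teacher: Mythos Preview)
Your proposal is correct and follows essentially the same route as the paper: unwind the distributional derivative of $\pi$ against $\varphi\Phi$, use $\iint\Theta\,d\pi=\int\Theta(x,F(x))g(x)\,dx$, integrate by parts the $\partial_{x_i}\varphi$ term, and then specialize to $\Phi(x,y)=f(y)-f(x)$ to see the cancellation of the $\partial_{x_i}f$ contributions. Your explicit warning about distinguishing $\Phi_{x_i}\circ G$ from $\partial_{x_i}(\Phi\circ G)$ is exactly the point on which the computation hinges, and your remark about the sign convention in the $y$-derivative case is apt, since the paper itself is loose there.
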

\begin{proof}
We have to check \textit{(1.2.b)} which in the current situation reads
\begin{equation*}
\int_{\mathbb{R}^n}\varphi(x)\psi(x) g(x) dx = -\iint_{\mathbb{R}^{2n}}\frac{\partial }{\partial x_i}(\varphi\Phi)(x,y) d\pi(x,y).
\end{equation*}
Let us write out the right hand side above in terms of $\pi$. We have that
\begin{align*}
-\iint_{\mathbb{R}^{2n}}&\frac{\partial}{\partial x_i}(\varphi\Phi)(x,y) d\pi(x,y)\\
&= -\int_{\mathbb{R}^{n}}\frac{\partial}{\partial x_i}(\varphi\Phi)(x,F(x))g(x) d(x)\\
&= -\int_{\mathbb{R}^{n}}\frac{\partial\varphi}{\partial x_i}(x)\Phi(x,F(x))g(x) d(x)
- \int_{\mathbb{R}^{n}}\varphi(x)\frac{\partial \Phi}{\partial x_i}(x,F(x))g(x) d(x)\\
&= \int_{\mathbb{R}^{n}}\frac{\partial}{\partial x_i}[g\cdot(\Phi\circ G)](x)\varphi(x) d(x)
- \int_{\mathbb{R}^{n}}g(x)\frac{\partial \Phi}{\partial x_i}(x,F(x))\varphi(x) d(x)
\end{align*}
for every $\varphi\in\mathscr{S}_1$. Hence
\begin{equation*}
\psi = \frac{1}{g}\frac{\partial}{\partial x_i} (g\cdot\Phi\circ G)-\frac{\partial \Phi}{\partial x_i}\circ G
\end{equation*}
as desired. For the second formula take $\Phi(x,y)=f(y)-f(x)$ in the above. Then
\begin{align*}
\Delta_{i,1}f(x) &= \frac{1}{g(x)}\frac{\partial}{\partial x_i}\left[g(x)\left(f(F(x))-f(x)\right)\right]
- \frac{\partial (f(y)-f(x))}{\partial x_i}(x,F(x))\\
&= \nabla f (F(x))\cdot\frac{\partial F}{\partial x_i}(x) - \frac{\partial f}{\partial x_i}(x) +
\frac{\partial \log g(x)}{\partial x_i}(f(F(x))-f(x)) +\frac{\partial f}{\partial x_i}(x),
\end{align*}
which is the desired formula. For the derivatives with respect to the $y$ variables the calculations are even easier. For $\varphi\in\mathscr{S}_1$,
\begin{align*}
\int_{\mathbb{R}^n}\varphi\Phi g dx &= \iint_{\mathbb{R}^{2n}}\frac{\partial}{\partial y_j}(\varphi\Phi) d\pi\\
&= -\iint_{\mathbb{R}^{2n}}\varphi(x)\frac{\partial\Phi}{\partial y_j}(x,y) d\pi(x,y)\\
&= -\int_{\mathbb{R}^{n}}\varphi(x)\frac{\partial\Phi}{\partial y_j}(x,F(x))g(x) dx,
\end{align*}
which proves the desired formula for $Kir_{j,2}\Phi$.
\end{proof}

\section{The classical Laplacian in $\mathbb{R}^n$ and formula \textit{(1.2.b)}}\label{sec:ClassicalLaplacianRn}

In this brief section we search for a couple of distributions $T$ and $S$ in $\mathbb{R}^n$ and $\mathbb{R}^{2n}$ respectively, such that the classical Laplacian $\Delta f= \sum_{i=1}^n \frac{\partial^2 f}{\partial x_i^2}$, can be seen as $Kir_{T,S}(f(y)-f(x))$, in the sense of \textit{(1.2.b)}. It is important at this point to emphasize that we are not trying to define the Laplacian. Instead, given the classical Laplacian, we are providing two distributions $T$ and $S$ such that $Kir_{T,S}(f(y)-f(x))=\Delta f$.

\begin{proposition}
Let $\mathscr{S}_1=\mathscr{C}_c^\infty(\mathbb{R}^n)$ and $\mathscr{S}_2=\mathscr{C}_c^\infty(\mathbb{R}^{2n})$, the classical test function spaces in $\mathbb{R}^n$ and $\mathbb{R}^{2n}$. Let $T$ be the distribution in $\mathbb{R}^n$ generated by the function identically equal to one. In other words, $\proin{T}{\varphi}=\int_{\mathbb{R}^n}\varphi(x) dx$, $\varphi\in\mathscr{S}_1$. Let $\pi$ be the measure defined in $\mathbb{R}^{2n}$ by $\pi(E)=\abs{\{x\in\mathbb{R}^n: (x,x)\in E\}}_n$, where $\abs{\cdot}_n$ denotes the $n$-dimensional Lebesgue measure. Since $\pi$ is a locally finite positive measure in $\mathbb{R}^{2n}$, it defines a distribution in $\mathbb{R}^{2n}$. Set $S=\Delta_y\pi=\sum_{j=1}^n\frac{\partial^2\pi}{\partial y_j^2}$, where the derivatives are considered in the sense of distributions. Then
\begin{equation*}
\Delta f(x) = Kir_{T,S}(f(y)-f(x)).
\end{equation*}
\end{proposition}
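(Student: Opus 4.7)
The plan is to verify condition \textit{(1.2.b)} from Section~\ref{sec:intro} directly: it suffices to show that for every $\varphi\in\mathscr{S}_1=\mathscr{C}_c^\infty(\mathbb{R}^n)$,
\begin{equation*}
\proin{T}{\varphi\cdot\Delta f}=\int_{\mathbb{R}^n}\varphi(x)\Delta f(x)\,dx=\langle\!\langle\Delta_y\pi,\,\varphi(x)(f(y)-f(x))\rangle\!\rangle,
\end{equation*}
which is precisely the statement that $\psi(x)=\Delta f(x)$ is the Kirchhoff divergence of $\Phi(x,y)=f(y)-f(x)$ with respect to $T$ and $S$.

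The computation unfolds in three transparent steps. First, by the definition of the distributional derivative on $\mathbb{R}^{2n}$, transferring $\Delta_y$ from $\pi$ to the test function gives
\begin{equation*}
\langle\!\langle\Delta_y\pi,\,\varphi(x)(f(y)-f(x))\rangle\!\rangle=\langle\!\langle\pi,\,\Delta_y[\varphi(x)(f(y)-f(x))]\rangle\!\rangle.
\end{equation*}
Second, since $\Delta_y$ acts only on the $y$-variable, it annihilates both the factor $\varphi(x)$ and the summand $f(x)$, leaving $\Delta_y[\varphi(x)(f(y)-f(x))]=\varphi(x)\,\Delta f(y)$. Third, from the definition $\pi(E)=\abs{\{x:(x,x)\in E\}}_n$ one has the pushforward identity $\iint\Theta\,d\pi=\int\Theta(x,x)\,dx$, whence
\begin{equation*}
\langle\!\langle\pi,\,\varphi(x)\Delta f(y)\rangle\!\rangle=\int_{\mathbb{R}^n}\varphi(x)\Delta f(x)\,dx,
\end{equation*}
which matches the left-hand side.

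The one technical point worth monitoring is that $\varphi(x)(f(y)-f(x))$ is generally not compactly supported in $\mathbb{R}^{2n}$: its support sits in $\supp(\varphi)\times\mathbb{R}^n$, which is unbounded in $y$. To apply $\Delta_y\pi$ strictly as a distribution on $\mathscr{C}_c^\infty(\mathbb{R}^{2n})$, I would first test against the truncation $\varphi(x)(f(y)\chi_R(y)-f(x))$ for a smooth cutoff $\chi_R$ equal to one on $B(0,R)$, and then let $R\to\infty$. Since $\pi$ is concentrated on the diagonal, only the behaviour of the integrand on $\{x=y\}\cap(\supp(\varphi)\times\mathbb{R}^n)$ matters, and this is a compact subset of $\mathbb{R}^{2n}$; once $R$ is large enough that $\chi_R\equiv 1$ on the projection of that set onto the $y$-axis, every error term involving derivatives of $\chi_R$ vanishes and the three-step identity holds without correction. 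This is precisely where the choice of derivatives in $y$ (rather than $x$) for $S$ is crucial: the compactly supported factor $\varphi(x)$ is never differentiated, so no boundary terms involving $f$ at infinity ever appear.
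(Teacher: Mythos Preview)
Your argument is correct and follows essentially the same route as the paper: transfer $\Delta_y$ from $\pi$ to the test function, use that $\varphi$ depends only on $x$, and then apply the diagonal-pushforward identity $\iint\Theta\,d\pi=\int\Theta(x,x)\,dx$. The only structural difference is that the paper first establishes $Kir_{T,S}\Phi(x)=(\Delta_y\Phi)(x,x)$ for general $\Phi\in\mathscr{C}_c^\infty(\mathbb{R}^{2n})$ and then specializes to $\Phi(x,y)=f(y)-f(x)$, whereas you go straight to that choice; your explicit cutoff argument handling the lack of compact support in $y$ is a point the paper leaves implicit.
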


\begin{proof}
Let us first find $Kir_{T,S}\Phi$ for $\Phi\in\mathscr{S}_2$. To check \textit{(1.2.b)} in this case, let us start from the right hand side with $\varphi\in\mathscr{S}_1$ and $\Phi\in\mathscr{S}_2$,
\begin{align*}
\langle\langle S,\varphi\Phi\rangle\rangle &= \langle\langle \Delta_y\pi,\varphi\Phi\rangle\rangle\\
&= (-1)^2 \langle\langle \pi,\Delta_y(\varphi\Phi)\rangle\rangle\\
&= \langle\langle \pi,\varphi\Delta_y\Phi\rangle\rangle\\
&= \iint_{\mathbb{R}^{2n}}\varphi(x)(\Delta_y\Phi)(x,y) d\pi(x,y)\\
&= \int_{\mathbb{R}^{n}}\varphi(x)(\Delta_y\Phi)(x,x) dx.
\end{align*}
Since the left hand side of \textit{(1.2.b)} reads
\begin{equation*}
\proin{T}{\varphi\psi} = \int_{\mathbb{R}^n}\varphi(x) Kir_{T,S}\Phi(x) dx
\end{equation*}
and the equation has to be true for every $\varphi\in\mathscr{S}_1$, we get
\begin{equation*}
Kir_{T,S}\Phi(x) = (\Delta_y\Phi)(x,x).
\end{equation*}
For $f\in\mathscr{S}_1$ we get
\begin{equation*}
\Delta_{T,S} f(x) = Kir_{T,S}(f(y)-f(x)) = \Delta f(x),
\end{equation*}
as desired.
\end{proof}

\section{On fractional Kirchhoff Divergences in the Euclidean space}\label{sec:FractionalKirchhoffDivergencesEuclideanSpace}
The observation regarding the Kirchhoff divergence and the Laplacian in the previous section, shows that the differential character of the operator induced by $S$, assuming $T=1$, is related to the singularity of $S$ on the diagonal of $\mathbb{R}^n\times\mathbb{R}^n$. In this section we explore this fact, searching for the distributions $S$ that produce, through \textit{(1.2.b)}, Kirchhoff fractional type operators. The distributions $S_s$ in $\mathbb{R}^{2n}$ are defined in terms of the ``affinity'' $\abs{x-y}^{-(n+2s)}$ for $0<s<1$. The singularity of the kernel on the diagonal of $\mathbb{R}^n\times\mathbb{R}^n$ increases as $s$ tends to one. Actually $s=\tfrac{1}{2}$ divides the character of the singularity and hence the actual definition of the distribution $S_s$. We shall consider these two cases separately.

\subsection*{First case: $0<s<\tfrac{1}{2}$.}
Let us start by proving the convergence of the integral defining the distribution $S$.
\begin{lemma}\label{lem:VergenceFractDistributionCase1}
For $\Phi\in\mathscr{C}_c^\infty(\mathbb{R}^{2n})$ the function defined in $\mathbb{R}^{2n}$ by $\frac{\Phi(x,y)-\Phi(x,x)}{\abs{x-y}^{n+2s}}$ is in $L^1(\mathbb{R}^{2n})$. The linear functional $S:\mathscr{C}_c^\infty(\mathbb{R}^{2n})\to \mathbb{R}$ given by
\begin{equation*}
\langle\!\langle S,\Phi \rangle\!\rangle = \iint_{\mathbb{R}^{2n}}\frac{\Phi(x,y)-\Phi(x,x)}{\abs{x-y}^{n+2s}} dx dy
\end{equation*}
defines a distribution in $\mathscr{D}^{'}(\mathbb{R}^{2n})$.
\end{lemma}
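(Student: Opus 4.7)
The heart of the argument is to extract the cancellation that $\Phi(x,y)-\Phi(x,x)$ vanishes on the diagonal. My plan is to split the integral into a small-scale ``near-diagonal'' piece and a ``far-from-diagonal'' piece, and control each by a different mechanism; then the continuity of $S$ on $\mathscr{D}(\mathbb{R}^{2n})$ will follow by inspecting that the same estimates depend only on a few seminorms of $\Phi$.

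Fix a compact set $K\subset\mathbb{R}^{2n}$ containing $\supp\Phi$ and choose $R>0$ with $K\subset \overline{B(0,R)}\times\overline{B(0,R)}$. Split
\begin{equation*}
\iint_{\mathbb{R}^{2n}} \frac{\Phi(x,y)-\Phi(x,x)}{\abs{x-y}^{n+2s}}\,dx\,dy
= I_{\mathrm{near}} + I_{\mathrm{far}},
\end{equation*}
where $I_{\mathrm{near}}$ is the integral over $\{\abs{y-x}\le 1\}$ and $I_{\mathrm{far}}$ over $\{\abs{y-x}>1\}$. For the near-diagonal piece, since $\Phi$ is smooth with compact support the mean value theorem gives $\abs{\Phi(x,y)-\Phi(x,x)}\le \norm{\nabla_y\Phi}_\infty\,\abs{y-x}$. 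Moreover, the integrand vanishes unless either $(x,y)$ or $(x,x)$ lies in $\supp\Phi$, which forces $\abs{x}\le R$ (and then $\abs{y}\le R+1$). Hence
\begin{equation*}
\abs{I_{\mathrm{near}}}\le \norm{\nabla_y\Phi}_\infty \int_{\abs{x}\le R} dx \int_{\abs{z}\le 1} \frac{dz}{\abs{z}^{n+2s-1}},
\end{equation*}
and the inner integral converges precisely when $n+2s-1<n$, that is $s<\tfrac{1}{2}$; this is exactly where the hypothesis is used.

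For the far piece I would handle the two terms of the numerator separately. The term with $\Phi(x,y)$ is bounded by $\norm{\Phi}_\infty\cdot\mathbf{1}_{\supp\Phi}(x,y)$ times $\abs{x-y}^{-(n+2s)}\le 1$, whose double integral is bounded by $\norm{\Phi}_\infty\abs{\supp\Phi}$. The term with $\Phi(x,x)$ has $x$ confined to the compact set $\{x:(x,x)\in\supp\Phi\}\subset\overline{B(0,R)}$, so after integrating in $y$ one gets
\begin{equation*}
\int_{\abs{x}\le R}\abs{\Phi(x,x)}\,dx\int_{\abs{z}>1}\frac{dz}{\abs{z}^{n+2s}},
\end{equation*}
and the inner integral converges because $s>0$. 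This proves that the integrand is in $L^1(\mathbb{R}^{2n})$.

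For the distributional continuity, suppose $\Phi_k\to 0$ in $\mathscr{D}(\mathbb{R}^{2n})$, so the $\Phi_k$ have support in a common compact $K$ and $\norm{\Phi_k}_\infty,\norm{\nabla \Phi_k}_\infty\to 0$. Repeating the two estimates with $\Phi_k$ in place of $\Phi$ shows $\abs{\langle\!\langle S,\Phi_k\rangle\!\rangle}\le C_K\bigl(\norm{\nabla\Phi_k}_\infty+\norm{\Phi_k}_\infty\bigr)\to 0$, which gives the required continuity on each $\mathscr{D}_K(\mathbb{R}^{2n})$ and hence on $\mathscr{D}(\mathbb{R}^{2n})$. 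The only genuine obstacle is the near-diagonal estimate; once one sees that the first-order Taylor remainder $\Phi(x,y)-\Phi(x,x)$ kills one power of $\abs{y-x}$ and that this is \emph{just enough} to beat $\abs{y-x}^{-(n+2s)}$ in dimension $n$ for $s<\tfrac12$, the rest is routine.
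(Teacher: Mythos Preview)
Your proof is correct and follows essentially the same route as the paper: a near/far split at scale $\abs{x-y}=1$, the mean value inequality $\abs{\Phi(x,y)-\Phi(x,x)}\le\norm{\nabla_y\Phi}_\infty\abs{x-y}$ for the near piece (this is where $s<\tfrac12$ enters), direct size bounds for the far piece, and then continuity by feeding a null sequence $\Phi_k$ with common compact support into the same estimates. The only cosmetic difference is that the paper bounds the far numerator in one stroke by $2\norm{\Phi}_\infty$ rather than splitting it into the $\Phi(x,y)$ and $\Phi(x,x)$ contributions as you do.
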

\begin{proof}
Let $K$ be a compact set in $\mathbb{R}^{n}$ such that $\supp \Phi\subset K\times \mathbb{R}^{n}$. Then, with $\omega_{n-1}$ the surface area of the unit sphere of $\mathbb{R}^{n}$,
\begin{align*}
\iint_{\mathbb{R}^{2n}}&\frac{\abs{\Phi(x,y)-\Phi(x,x)}}{\abs{x-y}^{n+2s}} dx dy\\
& = \int_K\int_{\mathbb{R}^{n}} \frac{\abs{\Phi(x,y)-\Phi(x,x)}}{\abs{x-y}^{n+2s}} dx dy \\
&\leq \int_K\left\{\int_{\abs{x-y}<1} \frac{\abs{\Phi(x,y)-\Phi(x,x)}}{\abs{x-y}^{n+2s}} dy + 2\norm{\Phi}_\infty\int_{\abs{x-y}\geq 1}\frac{dy}{\abs{x-y}^{n+2s}} \right\} dx\\
&\leq \int_K\left\{\int_{\abs{x-y}<1} \frac{\norm{\nabla_y\Phi}_\infty \abs{x-y}}{\abs{x-y}^{n+2s}} dy + \frac{\omega_{n-1}\norm{\Phi}_\infty}{s}\right\} dx\\
&=\omega_{n-1}\abs{K}\left(\frac{\norm{\nabla_y\Phi}_\infty}{1-2s} + \frac{\norm{\Phi}_\infty}{s}\right).
\end{align*}
In order to prove the continuity with the topology of $\mathscr{C}_c^\infty(\mathbb{R}^{2n})$ of the linear functional
$\langle\!\langle S,\Phi \rangle\!\rangle = \iint_{\mathbb{R}^{2n}}\frac{\Phi(x,y)-\Phi(x,x)}{\abs{x-y}^{n+2s}} dx dy
$, take a sequence $\Phi_k$ that tends to zero in $\mathscr{C}_c^\infty(\mathbb{R}^{2n})$. This means that there exists a compact set $\mathbb{K}$ in $\mathbb{R}^{2n}$ containing the supports of all the $\Phi_k'$s, and $\Phi_k$ and all its derivatives converge uniformly to zero in $\mathbb{R}^{2n}$. Let now $K$ be the projection of $\mathbb{K}$ in the first variables $x=(x_1,\ldots,x_n)$. Hence
\begin{align*}
\abs{\langle\!\langle S,\Phi_k\rangle\!\rangle} & \leq \int_{K}\int_{\mathbb{R}^{n}}\frac{\abs{\Phi_k(x,y)-\Phi_k(x,x)}}{\abs{x-y}^{n+2s}} dx dy \\
&\leq \omega_{n-1}\abs{K}\left(\frac{\norm{\nabla_y\Phi_k}_\infty}{1-2s} + \frac{\norm{\Phi_k}_\infty}{s}\right)
\end{align*}
which tends to zero when $k\to \infty$.
\end{proof}

\begin{lemma}\label{lem:VergenceFractBounded}
	For $\Phi\in \mathscr{C}_c^\infty(\mathbb{R}^{2n})$ the function
	\begin{equation*}
	 \psi(x)=\int_{\mathbb{R}^n}\frac{\Phi(x,y)-\Phi(x,x)}{\abs{x-y}^{n+2s}} dy
	\end{equation*}
	belongs to $\mathscr{C}_c^\infty(\mathbb{R}^n)$ and is bounded by $\omega_{n-1}\left(\frac{\norm{\nabla_y\Phi}_\infty}{1-2s} + \frac{\norm{\Phi}_\infty}{s}\right)$.
\end{lemma}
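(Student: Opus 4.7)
My plan is to prove the three claims — the explicit pointwise bound, compact support, and $\mathscr{C}^\infty$-smoothness — in that order, treating the first as a pointwise analogue of the estimate in Lemma~\ref{lem:VergenceFractDistributionCase1}.

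For the bound, I would fix $x\in\mathbb{R}^n$ and split the $y$-integral into $\set{\abs{x-y}<1}$ and $\set{\abs{x-y}\geq 1}$. On the near region the mean value theorem in the second argument of $\Phi$ gives $\abs{\Phi(x,y)-\Phi(x,x)}\leq\norm{\nabla_y\Phi}_\infty\abs{x-y}$, and polar coordinates centered at $x$ yield $\omega_{n-1}\norm{\nabla_y\Phi}_\infty\int_0^1 r^{-2s}\,dr=\omega_{n-1}\norm{\nabla_y\Phi}_\infty/(1-2s)$, using $2s<1$. On the far region one estimates the numerator by $2\norm{\Phi}_\infty$ and integrates $r^{-1-2s}$ over $[1,\infty)$ to obtain $\omega_{n-1}\norm{\Phi}_\infty/s$. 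Adding these gives the stated bound, uniformly in $x$. For compact support, let $K$ be the (compact) projection of $\supp\Phi$ onto the first factor of $\mathbb{R}^{2n}$; if $x\notin K$ then $\Phi(x,\cdot)\equiv 0$, so $\psi(x)=0$ and $\supp\psi\subset K$.

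The step I expect to be the principal obstacle is the $\mathscr{C}^\infty$-regularity, since the integrand is singular on the diagonal. After the translation $z=y-x$,
\[
\psi(x)=\int_{\mathbb{R}^n}\frac{\Psi(x,z)}{\abs{z}^{n+2s}}\,dz,\qquad \Psi(x,z):=\Phi(x,x+z)-\Phi(x,x),
\]
so the $x$-dependence is concentrated in the smooth function $\Psi$. The key observation is that $\Psi(x,0)\equiv 0$ in $x$, hence $\partial_x^{\alpha}\Psi(\cdot,0)\equiv 0$ for every multi-index $\alpha$. By Taylor's theorem in $z$, together with the uniform boundedness of all partial derivatives of $\Phi\in\mathscr{C}_c^\infty(\mathbb{R}^{2n})$, for every $\alpha$ there is a constant $C_\alpha$ with $\abs{\partial_x^{\alpha}\Psi(x,z)}\leq C_\alpha\min(\abs{z},1)$ uniformly in $x$. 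This yields an $x$-independent dominant that is integrable against $\abs{z}^{-n-2s}$: near zero it contributes $\abs{z}^{1-n-2s}$, integrable since $2s<1$, while for $\abs{z}\geq 1$ it contributes $\abs{z}^{-n-2s}$, integrable at infinity. Dominated convergence then justifies differentiation under the integral sign to all orders, so $\psi\in\mathscr{C}^\infty(\mathbb{R}^n)$; combined with the compact-support step, $\psi\in\mathscr{C}_c^\infty(\mathbb{R}^n)$.
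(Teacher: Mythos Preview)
Your proposal is correct and follows essentially the same approach as the paper: the pointwise bound is obtained by the near/far splitting exactly as in Lemma~\ref{lem:VergenceFractDistributionCase1}, compact support comes from the projection of $\supp\Phi$, and smoothness follows from the translation $z=y-x$ that moves the singularity to the origin. The paper's proof is far terser on the regularity step---it simply records the translated formula and declares that regularity follows---whereas you spell out the key mechanism (that $\Psi(x,0)\equiv 0$ forces $\partial_x^\alpha\Psi(\cdot,0)\equiv 0$, yielding an integrable $x$-uniform majorant and hence differentiation under the integral sign), which is a welcome clarification rather than a different route.
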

\begin{proof}
It is clear that if $x$ does not belong to the projection, in the variable $x$, of the support of $\Phi$, we have that $\psi(x)=0$. On the other hand, as we showed in the proof of Lemma~\ref{lem:VergenceFractDistributionCase1},
$\abs{\psi(x)}\leq\omega_{n-1}\left(\frac{\norm{\nabla_y\Phi}_\infty}{1-2s} + \frac{\norm{\Phi}_\infty}{s}\right)$.
The regularity of $\psi$ follows from the fact that $\psi(x)=\int_{\mathbb{R}^n}\frac{\Phi(x,x-y)-\Phi(x,0)}{\abs{y}^{n+2s}}dx$.
\end{proof}

\begin{proposition}\label{propo:KirDivergenceFracCase1}
With $S$ as in Lemma~\ref{lem:VergenceFractDistributionCase1} in $\mathscr{D}^{'}(\mathbb{R}^{2n})$	 and $T$ the distribution induced in $\mathbb{R}^n$ by the function identically equal to one, we have
\begin{equation}\label{eq:DivFractRn}
Kir_{T,S}\Phi(x) = \psi(x) = \int_{\mathbb{R}^n}\frac{\Phi(x,y)-\Phi(x,x)}{\abs{x-y}^{n+2s}} dy
\end{equation}
for every $\Phi\in \mathscr{C}_c^\infty(\mathbb{R}^{2n})$.
\end{proposition}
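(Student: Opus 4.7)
The plan is to verify the two defining conditions \textit{(1.2.a)} and \textit{(1.2.b)} of the Kirchhoff divergence directly, leaning entirely on the two preceding lemmas for all analytic work. The claim is essentially algebraic once integrability and regularity are in hand.

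First, for condition \textit{(1.2.a)}: Lemma~\ref{lem:VergenceFractBounded} gives that $\psi\in \mathscr{C}_c^\infty(\mathbb{R}^n)=\mathscr{S}_1$. Hence for any $\varphi\in\mathscr{S}_1$ the product $\varphi\psi$ also lies in $\mathscr{S}_1$, and \textit{(1.2.a)} is automatic.

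Next, for condition \textit{(1.2.b)}: with $T$ the distribution given by integration against $1$, one computes
\begin{equation*}
\proin{T}{\varphi\psi}=\int_{\mathbb{R}^n}\varphi(x)\psi(x)\,dx=\int_{\mathbb{R}^n}\varphi(x)\int_{\mathbb{R}^n}\frac{\Phi(x,y)-\Phi(x,x)}{\abs{x-y}^{n+2s}}\,dy\,dx.
\end{equation*}
I would now apply Fubini's theorem to interchange the order of integration. The justification is exactly Lemma~\ref{lem:VergenceFractDistributionCase1}: the function $(x,y)\mapsto [\Phi(x,y)-\Phi(x,x)]/\abs{x-y}^{n+2s}$ is in $L^1(\mathbb{R}^{2n})$, so multiplying by the bounded compactly supported factor $\varphi(x)$ keeps it in $L^1(\mathbb{R}^{2n})$. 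After exchanging, pull $\varphi(x)$ into the numerator to obtain
\begin{equation*}
\proin{T}{\varphi\psi}=\iint_{\mathbb{R}^{2n}}\frac{\varphi(x)\Phi(x,y)-\varphi(x)\Phi(x,x)}{\abs{x-y}^{n+2s}}\,dx\,dy.
\end{equation*}

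The key observation to close the argument is that $\varphi(x)$ depends only on the first variable, so it agrees with itself on the diagonal: $(\varphi\Phi)(x,x)=\varphi(x)\Phi(x,x)$, while of course $(\varphi\Phi)(x,y)=\varphi(x)\Phi(x,y)$. Therefore the last display is exactly the definition of $\langle\!\langle S,\varphi\Phi\rangle\!\rangle$ from Lemma~\ref{lem:VergenceFractDistributionCase1}, establishing \textit{(1.2.b)} and hence the identity \eqref{eq:DivFractRn}. There is no real obstacle here: Lemma~\ref{lem:VergenceFractDistributionCase1} supplies integrability (hence Fubini), Lemma~\ref{lem:VergenceFractBounded} supplies smoothness and compact support of $\psi$, and the diagonal evaluation $(\varphi\Phi)(x,x)=\varphi(x)\Phi(x,x)$ does the rest.
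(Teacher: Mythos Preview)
Your proof is correct and follows essentially the same approach as the paper: both use the diagonal identity $(\varphi\Phi)(x,x)=\varphi(x)\Phi(x,x)$ together with Fubini (justified by the $L^1$ bound of Lemma~\ref{lem:VergenceFractDistributionCase1}) to match $\proin{T}{\varphi\psi}$ with $\langle\!\langle S,\varphi\Phi\rangle\!\rangle$. The only cosmetic difference is that the paper starts from $\langle\!\langle S,\varphi\Phi\rangle\!\rangle$ and works toward the iterated integral, while you start from $\proin{T}{\varphi\psi}$ and work the other way; you are also slightly more explicit about verifying \textit{(1.2.a)} via Lemma~\ref{lem:VergenceFractBounded}.
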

\begin{proof}
In the current situation equation \textit{(1.2.b)} takes the form
\begin{align*}
\int_{\mathbb{R}^n}\varphi(x) Kir_{T,S}\Phi(x) dx &= \langle T,\varphi Kir_{T,S}\Phi\rangle\\
&= \langle\!\langle S,\varphi\Phi\rangle\!\rangle\\
&= \iint_{\mathbb{R}^{2n}}\frac{\varphi(x)\Phi(x,y)-\varphi(x)\Phi(x,x)}{\abs{x-y}^{n+2s}} dx dy\\
&= \int_{\mathbb{R}^{n}}\varphi(x)\left(\int_{\mathbb{R}^{n}}\frac{\Phi(x,y)-\Phi(x,x)}{\abs{x-y}^{n+2s}} dy\right) dx
\end{align*}
for every $\varphi\in \mathscr{C}_c^\infty(\mathbb{R}^{n})$.
\end{proof}

Notice that the boundedness of $\psi$ in Lemma~\ref{lem:VergenceFractBounded} only requires the boundedness of $\Phi$ and its gradient in the second variable $y=(y_1,\ldots,y_n)$. So that if $f$ and its gradient are bounded, we can take $\Phi(x,y)=f(y)-f(x)$ in formula \eqref{eq:DivFractRn} in order to obtain the fractional Laplacian $(-\Delta)^s$, $0<s<\tfrac{1}{2}$, as a $Kir\, grad$ operator with $grad \,f(x,y) = f(y)-f(x)$ and $Kir$ given by \eqref{eq:DivFractRn}. Precisely,
\begin{equation*}
(-\Delta)^s f(x) = \int_{\mathbb{R}^n}\frac{f(y)-f(x)}{\abs{x-y}^{n+2s}} dy.
\end{equation*}

\subsection*{Second case: $\tfrac{1}{2}\leq s <1$}  In this case the integral defining $S$ has to be taken in the principal value sense, because for $s\geq\tfrac{1}{2}$ the function $\frac{\Phi(x,y)-\Phi(x,x)}{\abs{x-y}^{n+2s}}$ is generally not integrable in $\mathbb{R}^{2n}$.

\begin{lemma}\label{lemma:VergenceFracDistributionCase2}
For every $\Phi\in\mathscr{C}_c^\infty(\mathbb{R}^{2n})$ the limit $\lim_{\varepsilon\to 0}\iint_{B_\varepsilon^c}\frac{\Phi(x,y)-\Phi(x,x)}{\abs{x-y}^{n+2s}} dx dy$ exists, where $B_\varepsilon^c$ is the complement in $\mathbb{R}^{2n}$ of the diagonal $\varepsilon$-band $B_\varepsilon=\{(x,y)\in\mathbb{R}^n\times\mathbb{R}^n: \abs{x-y}<\varepsilon\}$. Moreover, the limit defines a distribution in $\mathscr{D}^{'}(\mathbb{R}^{2n})$.
\end{lemma}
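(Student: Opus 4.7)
The plan is to establish existence of the principal value by the standard cancellation argument for a kernel of critical order, then to read off a seminorm estimate that gives continuity on $\mathscr{D}(\mathbb{R}^{2n})$. Fix $\Phi\in\mathscr{C}_c^\infty(\mathbb{R}^{2n})$ with $\supp\Phi\subseteq K\times K'$ for compact sets $K,K'\subset\mathbb{R}^n$. The region $B_\varepsilon^c$ is closed and the integrand is continuous there, so Fubini applies; after the change of variables $z=y-x$,
\begin{equation*}
\iint_{B_\varepsilon^c}\frac{\Phi(x,y)-\Phi(x,x)}{|x-y|^{n+2s}}\,dx\,dy=\int_K\int_{|z|\geq\varepsilon}\frac{\Phi(x,x+z)-\Phi(x,x)}{|z|^{n+2s}}\,dz\,dx.
\end{equation*}
Split the inner integral at $|z|=1$. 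The tail $|z|\geq 1$ is uniformly absolutely integrable with bound $2\|\Phi\|_\infty\omega_{n-1}/(2s)$, independent of $\varepsilon$.

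For the near-diagonal piece I expand to second order,
\begin{equation*}
\Phi(x,x+z)-\Phi(x,x)=\nabla_y\Phi(x,x)\cdot z+R(x,z),\qquad |R(x,z)|\leq\tfrac{1}{2}\|D_y^2\Phi\|_\infty|z|^2.
\end{equation*}
The crucial observation is that the linear term contributes $\nabla_y\Phi(x,x)\cdot\int_{\varepsilon\leq|z|\leq 1}z\,|z|^{-n-2s}\,dz$, which vanishes identically for every $\varepsilon>0$ because the annulus is symmetric under $z\mapsto -z$ while the integrand is odd. The remainder $R(x,z)/|z|^{n+2s}$ is pointwise dominated by $\tfrac{1}{2}\|D_y^2\Phi\|_\infty|z|^{2-n-2s}$ and
\begin{equation*}
\int_{|z|\leq 1}|z|^{2-n-2s}\,dz=\omega_{n-1}\int_0^1 r^{1-2s}\,dr=\frac{\omega_{n-1}}{2-2s}<\infty
\end{equation*}
since $s<1$. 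Hence, after the exact cancellation of the linear term, the truncated integrand is absolutely dominated by an $L^1$ function on $K\times\{|z|\leq 1\}$ uniformly in $\varepsilon$, and dominated convergence delivers the limit.

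Linearity of $S$ is immediate, and the same estimates give
\begin{equation*}
\left|\langle\!\langle S,\Phi\rangle\!\rangle\right|\leq\omega_{n-1}|K|\left(\frac{\|D_y^2\Phi\|_\infty}{2(2-2s)}+\frac{\|\Phi\|_\infty}{s}\right),
\end{equation*}
a seminorm involving only supremum norms of $\Phi$ and its second $y$-derivatives on a fixed compact set. If $\Phi_k\to 0$ in $\mathscr{C}_c^\infty(\mathbb{R}^{2n})$, all supports lie in a common compact $\mathbb{K}$ and $\Phi_k,D_y\Phi_k,D_y^2\Phi_k$ converge uniformly to zero, so $\langle\!\langle S,\Phi_k\rangle\!\rangle\to 0$, exactly as in Lemma~\ref{lem:VergenceFractDistributionCase1}. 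The main obstacle, and the reason a naive repetition of the case $s<\tfrac{1}{2}$ fails, is that the linear contribution $\nabla_y\Phi(x,x)\cdot z/|z|^{n+2s}$ is \emph{not} absolutely integrable near the diagonal when $s\geq\tfrac{1}{2}$ (the $s=\tfrac{1}{2}$ singularity is logarithmic); only the symmetry of the truncation $\{\varepsilon\leq|z|\leq 1\}$ under $z\mapsto -z$ yields its exact cancellation and makes the principal value meaningful.
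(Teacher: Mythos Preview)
Your proof is correct and follows essentially the same approach as the paper: both split at $|x-y|=1$, use the symmetry of the annulus to kill the first-order Taylor term $\nabla_y\Phi(x,x)\cdot(y-x)/|x-y|^{n+2s}$ exactly, and control the second-order remainder via the integrability of $|x-y|^{2-n-2s}$ for $s<1$. The only cosmetic difference is that you conclude via dominated convergence while the paper verifies the Cauchy property directly, obtaining the explicit rate $\varepsilon^{2(1-s)}-\delta^{2(1-s)}$.
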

\begin{proof}
Let $\varepsilon >0$ fixed. Let us denote by $\nabla_y$ the gradient of functions defined on $\mathbb{R}^n\times\mathbb{R}^n$ with respect to $y$, the second group of variables. Notice first that $\iint_{B_\varepsilon^c}\frac{\abs{\Phi(x,y)-\Phi(x,x)}}{\abs{x-y}^{n+2s}} dx dy$ is finite. In fact, with $K$ such that $\supp\Phi \subset K\times\mathbb{R}^n$, $K$ compact,
\begin{align*}
\iint_{B_\varepsilon^c}\frac{\abs{\Phi(x,y)-\Phi(x,x)}}{\abs{x-y}^{n+2s}} dx dy & = \int_{x\in K}\left(\int_{\abs{x-y}\geq \varepsilon}\frac{\abs{\Phi(x,y)-\Phi(x,x)}}{\abs{x-y}^{n+2s}}  dy\right)dx\\
& \leq c_{n,s}\norm{\Phi}_{\infty}\abs{K}\varepsilon^{-2s}.
\end{align*}
Since, from symmetry, we have $\int_{\varepsilon\leq\abs{x-y}<1}\frac{\nabla_y\Phi(x,x)\cdot (x-y)}{\abs{x-y}^{n+2s}}dy=0$, we write
\begin{align*}
\iint_{B_\varepsilon^c}& \frac{\Phi(x,y)-\Phi(x,x)}{\abs{x-y}^{n+2s}} dx dy\\
&=\int_{x\in K}\left(\int_{\varepsilon\leq\abs{x-y}<1}\frac{\Phi(x,y)-\Phi(x,x)-\nabla_y\Phi(x,x)\cdot (y-x)}{\abs{x-y}^{n+2s}}dy\right.\\
& \phantom{\int_{x\in\Pi_1(\supp\Phi)}\left(\right.}\left.+\int_{\abs{x-y}\geq 1}\frac{\Phi(x,y)-\Phi(x,x)}{\abs{x-y}^{n+2s}}dy\right)dx.
\end{align*}
So that, for $0<\delta<\varepsilon<1$ we get
\begin{align*}
&\abs{\iint_{B_\delta^c}-\iint_{B_\epsilon^c}}\\
& =
\abs{\int_{x\in K}\left(\int_{\varepsilon>\abs{x-y}\geq \delta}\frac{\Phi(x,y)-\Phi(x,x)-\nabla_y\Phi(x,x)\cdot (y-x)}{\abs{x-y}^{n+2s}}dy\right)dx}\\
& \leq \int_{x\in K}\int_{\varepsilon>\abs{x-y}\geq \delta}\sup_{\abs{\alpha}=2}\norm{\partial^\alpha_y\Phi}_\infty
\frac{\abs{x-y}^2}{\abs{x-y}^{n+2s}}dydx\\
& = c_{n,s}\sup_{\abs{\alpha}=2}\norm{\partial^\alpha_y\Phi}_\infty (\varepsilon^{2(1-s)}-\delta^{2(1-s)}),
\end{align*}
which tends to zero for $\varepsilon\to 0$. Here $\abs{\alpha}=\sum_{i=1}^n\alpha_i$ is the length of the multiindex $\alpha$.

Let us prove that $\langle\!\langle S,\Phi \rangle\!\rangle = \lim_{\varepsilon\to 0}\iint_{B_\varepsilon^c}\frac{\Phi(x,y)-\Phi(x,x)}{\abs{x-y}^{n+2s}} dx dy$ defines a distribution in $\mathscr{D}^{'}(\mathbb{R}^{2n})$. Let $\{\Phi_k: k\in\mathbb{N}\}$ be a sequence in $\mathscr{C}_c^\infty(\mathbb{R}^{2n})$ such that $\Phi_k\to 0$ in $\mathscr{C}_c^\infty(\mathbb{R}^{2n})$. Let $\mathbb{K}$ be a compact set $\mathbb{R}^{2n}$ such that $\supp\Phi_k\subseteq \mathbb{K}$, for every $k\in\mathbb{N}$. Moreover, $\partial^\alpha\Phi_k\rightrightarrows 0$, uniformly for every $\alpha\in\mathbb{N}^{2n}_0$. With $K$ compact in $\mathbb{R}^n$ such that $K\times\mathbb{R}^n \supseteq \mathbb{K}$, we have
\begin{align*}
&\abs{\langle\!\langle S,\Phi_k\rangle\!\rangle}\\
& = \lim_{\varepsilon\to 0}\left|\int_{x\in K}\left(\int_{\varepsilon\leq\abs{x-y}<1}\frac{\Phi_k(x,y)-\Phi_k(x,x)-\nabla_y\Phi_k(x,x)\cdot (y-x)}{\abs{x-y}^{n+2s}}dy\right.\right.\\
&\phantom{\lim_{\varepsilon\to 0}\left|\int_{x\in K}\left(\right.\right.}\left.\left.+\int_{\abs{x-y}\geq 1}\frac{\Phi_k(x,y)-\Phi_k(x,x)}{\abs{x-y}^{n+2s}}dy\right)dx\right| \\
&\leq \limsup_{\varepsilon\to 0}\int_{x\in K}\int_{\varepsilon\leq\abs{x-y}<1}\frac{\abs{\Phi_k(x,y)-\Phi_k(x,x)-\nabla_y\Phi_k(x,x)\cdot (y-x)}}{\abs{x-y}^{n+2s}}dy dx \\
&\phantom{\lim_{\varepsilon\to 0}\left|\int_{x\in K}\left(\right.\right.} + c_{n,s}\norm{\Phi_k}_\infty\abs{K}\\
&\leq c_{n,s}\abs{K}\left(\sup_{\abs{\alpha}=2}\norm{\partial^\alpha\Phi_k}_\infty+\norm{\Phi_k}_\infty\right),
\end{align*}
which tends to zero as $k\to\infty.$
\end{proof}

\begin{lemma}\label{lem:LimitFractionalEpsilonWellDefined}
For $\Phi\in\mathscr{C}_c^\infty(\mathbb{R}^{2n})$, the function 
\begin{equation*}
\psi(x)=\lim_{\varepsilon\to 0}\int_{\abs{x-y}\geq \varepsilon}\frac{\Phi(x,y)-\Phi(x,x)}{\abs{x-y}^{n+2s}} dy
\end{equation*}
is well defined as a continuous and compactly supported function on $\mathbb{R}^n$.
\end{lemma}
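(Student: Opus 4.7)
My plan is to convert the principal value to an absolutely convergent integral by the same symmetry trick used in Lemma~\ref{lemma:VergenceFracDistributionCase2}, and then read off existence, compact support, and continuity from that representation.

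First I would change variables $z=y-x$ to rewrite the integrand as
\begin{equation*}
\frac{\Phi(x,x+z)-\Phi(x,x)}{\abs{z}^{n+2s}},
\end{equation*}
whose $z$-domain of integration is symmetric about the origin. Then, exactly as in the proof of Lemma~\ref{lemma:VergenceFracDistributionCase2}, the identity
\begin{equation*}
\int_{\varepsilon\leq\abs{z}<1}\frac{\nabla_y\Phi(x,x)\cdot z}{\abs{z}^{n+2s}}\,dz=0
\end{equation*}
(odd integrand on a symmetric annulus) lets me subtract this term for free. Splitting the $z$-domain into $\abs{z}<1$ and $\abs{z}\geq 1$ and passing $\varepsilon\to 0$, I obtain the representation
\begin{equation*}
\psi(x)=\int_{\abs{z}<1}\frac{\Phi(x,x+z)-\Phi(x,x)-\nabla_y\Phi(x,x)\cdot z}{\abs{z}^{n+2s}}\,dz+\int_{\abs{z}\geq 1}\frac{\Phi(x,x+z)-\Phi(x,x)}{\abs{z}^{n+2s}}\,dz.
\end{equation*}

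Next I would verify that both integrals are absolutely convergent with integrable majorants independent of $x$. For the first integral, a second order Taylor expansion in $z$ of the numerator gives a bound $C\,\sup_{\abs{\alpha}=2}\norm{\partial^{\alpha}_y\Phi}_\infty\abs{z}^{2}$, so the integrand is dominated by a constant multiple of $\abs{z}^{2-n-2s}$, which is integrable on $\abs{z}<1$ since $2-2s>0$. For the second integral, the numerator is bounded by $2\norm{\Phi}_\infty$ and $\abs{z}^{-n-2s}$ is integrable on $\abs{z}\geq 1$. In particular $\psi$ is well defined pointwise, and the limit in the statement exists.

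The compact support is immediate: if $K\subset\mathbb{R}^n$ is a compact set with $\supp\Phi\subseteq K\times\mathbb{R}^n$, then for $x\notin K$ we have $\Phi(x,\cdot)\equiv 0$, so every approximating integral vanishes and $\psi(x)=0$. Finally, continuity of $\psi$ follows from dominated convergence applied to the two absolutely convergent integrals above: the integrands are continuous in $x$ for each fixed $z$ (smoothness of $\Phi$), and the $x$-independent dominants $C\abs{z}^{2-n-2s}\mathbf{1}_{\abs{z}<1}+C\abs{z}^{-n-2s}\mathbf{1}_{\abs{z}\geq 1}$ are integrable on $\mathbb{R}^n$. The only delicate step is handling the middle-term $\nabla_y\Phi(x,x)\cdot z$, and that is exactly what the symmetric-annulus cancellation already used in Lemma~\ref{lemma:VergenceFracDistributionCase2} resolves; once that reformulation is in place, the rest is routine.
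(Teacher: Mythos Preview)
Your proof is correct and follows essentially the same route as the paper: both hinge on the symmetric-annulus cancellation of the first-order term $\nabla_y\Phi(x,x)\cdot z$ (exactly as in Lemma~\ref{lemma:VergenceFracDistributionCase2}) together with the second-order Taylor bound to make the local part absolutely integrable. The only cosmetic difference is that the paper shows the truncations $\psi_\varepsilon$ form a Cauchy sequence in the uniform norm (so continuity comes from uniform convergence), whereas you pass to the absolutely convergent representation of $\psi$ and invoke dominated convergence; these are equivalent packagings of the same estimate.
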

\begin{proof}
Set $\psi_\varepsilon(x)=\int_{\abs{x-y}\geq \varepsilon}\frac{\Phi(x,y)-\Phi(x,x)}{\abs{x-y}^{n+2s}} dy$. Let us prove that $\psi_\varepsilon$ is a Cauchy sequence in the uniform norm on the compact $K$ if $K\times\mathbb{R}^n\supseteq\supp\Phi$. In fact, since
\begin{align*}
\psi_\varepsilon(x) & =
\int_{\abs{x-y}\geq \varepsilon}\frac{\Phi(x,y)-\Phi(x,x)}{\abs{x-y}^{n+2s}} dy\\ 
& = \int_{1>\abs{x-y}\geq \varepsilon} + \int_{\abs{x-y}\geq 1}\\
& = \int_{1>\abs{x-y}\geq\varepsilon}\frac{\Phi(x,y)-\Phi(x,x)-\nabla_y\Phi(x,x)\cdot (y-x)}{\abs{x-y}^{n+2s}}dy\\
& \phantom{\int_{1>\abs{x-y}\geq\varepsilon}}
+\int_{\abs{x-y}\geq 1}\frac{\Phi(x,y)-\Phi(x,x)}{\abs{x-y}^{n+2s}}dy,
\end{align*}
for $0<\delta<\varepsilon<1$, se have
\begin{align*}
\abs{\psi_\delta(x) - \psi_\varepsilon (x)} &=
\abs{\int_{\abs{x-y}\geq \delta}-\int_{\abs{x-y}\geq \varepsilon}\frac{\Phi(x,y)-\Phi(x,x)}{\abs{x-y}^{n+2s}}dy}\\
&\leq \int_{\delta\leq\abs{x-y}<\varepsilon}\frac{\abs{\Phi(x,y)-\Phi(x,x)-\nabla_y\Phi(x,x)\cdot (y-x)}}{\abs{x-y}^{n+2s}} dy\\
&\leq \int_{\delta\leq\abs{x-y}<\varepsilon}\sup_{\abs{\alpha}=2}\norm{\partial^\alpha_y\Phi}_\infty\frac{\abs{x-y}^2}{\abs{x-y}^{n+2s}} dy\\
&\leq c_{n,s}\sup_{\abs{\alpha}=2}\norm{\partial^\alpha_y\Phi}_\infty (\varepsilon^{2(1-s)}-\delta^{2(1-s)})
\end{align*}
for every $x\in K$. Hence $\psi_\varepsilon$ it converges to a continuous function supported in $K$ as $\varepsilon\to 0$.
\end{proof}

\begin{proposition}\label{propo:KirFracRnIdenticallyOne}
For $T$, the distribution in $\mathbb{R}^n$ induced by the function identically equal to one, and $S$ as in Lemma~\ref{lemma:VergenceFracDistributionCase2}, we have that for $\Phi\in\mathscr{C}_c^\infty(\mathbb{R}^{2n})$
\begin{equation*}
Kir\,\Phi (x) = \lim_{\varepsilon\to 0^+}\int_{\abs{x-y}\geq \varepsilon}\frac{\Phi(x,y)-\Phi(x,x)}{\abs{x-y}^{n+2s}}dy
\end{equation*}
solves equation \textit{(1.2.b)}.
\end{proposition}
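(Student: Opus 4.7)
The plan is to verify equation \textit{(1.2.b)} of Section~\ref{sec:intro} directly, by combining the principal-value definition of $S$ from Lemma~\ref{lemma:VergenceFracDistributionCase2} with the uniform convergence of the truncations given by Lemma~\ref{lem:LimitFractionalEpsilonWellDefined}. First I would note that for every $\varphi\in\mathscr{C}_c^\infty(\mathbb{R}^n)$ the product $(\varphi\Phi)(x,y):=\varphi(x)\Phi(x,y)$ belongs to $\mathscr{C}_c^\infty(\mathbb{R}^{2n})$ and satisfies $(\varphi\Phi)(x,x)=\varphi(x)\Phi(x,x)$, so that Lemma~\ref{lemma:VergenceFracDistributionCase2} applies and produces
\[
\langle\!\langle S,\varphi\Phi\rangle\!\rangle = \lim_{\varepsilon\to 0^+}\iint_{\abs{x-y}\geq \varepsilon}\frac{\varphi(x)\bigl[\Phi(x,y)-\Phi(x,x)\bigr]}{\abs{x-y}^{n+2s}}\,dx\,dy.
\]

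The next step is to exchange the order of integration for each fixed $\varepsilon>0$. Absolute integrability on the off-diagonal set $\{\abs{x-y}\geq\varepsilon\}$ follows from the bound derived in the proof of Lemma~\ref{lemma:VergenceFracDistributionCase2}: if $K$ is a compact set with $\supp\Phi\subseteq K\times\mathbb{R}^n$, the integrand is dominated by a constant multiple of $\norm{\varphi}_\infty\norm{\Phi}_\infty\abs{K}\varepsilon^{-2s}$. Fubini's theorem then rewrites the truncated integral as $\int_{\mathbb{R}^n}\varphi(x)\psi_\varepsilon(x)\,dx$, where
\[
\psi_\varepsilon(x)=\int_{\abs{x-y}\geq\varepsilon}\frac{\Phi(x,y)-\Phi(x,x)}{\abs{x-y}^{n+2s}}\,dy.
\]
To conclude, I would pass to the limit inside the $x$-integration: Lemma~\ref{lem:LimitFractionalEpsilonWellDefined} gives $\psi_\varepsilon\to\psi$ uniformly on $\mathbb{R}^n$ (both are supported in $K$, since for $x\notin K$ the integrand vanishes identically in $y$), and $\varphi$ is bounded with compact support, so
\[
\lim_{\varepsilon\to 0^+}\int_{\mathbb{R}^n}\varphi(x)\psi_\varepsilon(x)\,dx = \int_{\mathbb{R}^n}\varphi(x)\psi(x)\,dx = \proin{T}{\varphi\psi}.
\]
Combining this with the previous display yields $\proin{T}{\varphi\psi}=\langle\!\langle S,\varphi\Phi\rangle\!\rangle$ for every $\varphi\in\mathscr{S}_1$, which is exactly \textit{(1.2.b)}.

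I do not anticipate a serious obstacle. The two delicate ingredients — the $L^1$ control that justifies Fubini for each fixed $\varepsilon$, and the uniform convergence $\psi_\varepsilon\to\psi$ as $\varepsilon\to 0^+$ — have already been isolated in Lemmas~\ref{lemma:VergenceFracDistributionCase2} and \ref{lem:LimitFractionalEpsilonWellDefined}. The one step that deserves a moment of care is the interchange of $\lim_{\varepsilon\to 0^+}$ with the outer $\varphi$-weighted integration, but the uniform convergence of $\psi_\varepsilon$ on the compact support of $\varphi$ settles this immediately; no additional cancellation (such as the symmetry trick with $\nabla_y\Phi(x,x)\cdot(y-x)$ used inside the lemma) needs to be reproduced here, since those adjustments already live inside the definition of $\langle\!\langle S,\cdot\rangle\!\rangle$.
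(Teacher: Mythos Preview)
Your proposal is correct and follows essentially the same route as the paper: apply the definition of $S$ to $\varphi\Phi$, use Fubini on each $\varepsilon$-truncation to factor out $\varphi(x)$, and then pass to the limit inside the $x$-integral by invoking Lemma~\ref{lem:LimitFractionalEpsilonWellDefined}. The only cosmetic difference is that the paper phrases the last interchange via dominated convergence, whereas you invoke the uniform convergence on $K$ directly; both are immediate from that lemma.
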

\begin{proof}
With $\varphi\in\mathscr{C}_c^\infty(\mathbb{R}^{n})$ and $\Phi\in\mathscr{C}_c^\infty(\mathbb{R}^{2n})$, equation \textit{(1.2.b)} takes the form
\begin{align*}
\int_{\mathbb{R}^n}\varphi(x)Kir\,\Phi(x)dx &= \langle\!\langle S,\varphi\Phi\rangle\!\rangle \\
& = \lim_{\varepsilon\to 0^+}\iint_{B^c_\varepsilon}\frac{\varphi(x)\Phi(x,y)-\varphi(x)\Phi(x,x)}{\abs{x-y}^{n+2s}}dxdy\\
&=\lim_{\varepsilon\to 0^+}\int_{\mathbb{R}^n}\varphi(x)\left(\int_{\abs{x-y}\geq\varepsilon}\frac{\Phi(x,y)-\Phi(x,x)}{\abs{x-y}^{n+2s}}dy\right)dx\\
&=\int_{\mathbb{R}^n}\varphi(x)\left(\lim_{\varepsilon\to 0^+}\int_{\abs{x-y}\geq\varepsilon}\frac{\Phi(x,y)-\Phi(x,x)}{\abs{x-y}^{n+2s}}dy\right)dx
\end{align*}
the last equation follows from Lebesgue dominated convergence theorem and Lemma~\ref{lem:LimitFractionalEpsilonWellDefined}.
\end{proof}

\section{Fractional Kirchhoff divergence on Ahlfors spaces}\label{sec:FractionalKirchhoffdivergenceAhlforsSpaces}

The case $0<s<\tfrac{1}{2}$ in the previous section admits an extension to Ahlfors regular metric spaces. Let us fix the basic notation. Let $(X,d)$ be a metric space and $\mu$ be a Borel measure on $X$ such that there exist constants $0<c_1\leq c_2<\infty$, $\gamma>0$, for which the inequalities
\begin{equation*}
c_1 r^\gamma\leq \mu(B(x,r))\leq c_2 r^\gamma
\end{equation*}
hold for $r>0$ and less than the diameter of $X$. The Hausdorff dimension with respect to $d$ of every ball in $X$ is $\gamma$. Replacing now the smooth test functions by compactly supported Lipschitz functions with respect to $d$, we have analogous for Lemmas~\ref{lem:VergenceFractDistributionCase1}, \ref{lem:VergenceFractBounded} and Proposition~\ref{propo:KirDivergenceFracCase1}, summarized in the next statement.
\begin{proposition}\label{propo:FractionalCase1LipschitzFunctions}
Let $0<s<\tfrac{1}{2}$, $(X,d,\mu)$ as before, $\mathscr{S}_1=Lip_0(X)$, the Lipschitz functions with bounded support in $X$, $\mathscr{S}_2= Lip_0(X\times X)$ the Lipschitz functions with bounded support in $X\times X$. Then, for $\Phi\in\mathscr{S}_2$,
\begin{enumerate}[(a)]
	\item the function $\frac{\Phi(x,y)-\Phi(x,x)}{d(x,y)^{\gamma +2s}}$ belongs to $L^1(X\times X, d\mu\times d\mu)$;
	\item the linear functional $S:\mathscr{S}_2\to\mathbb{R}$ given by 
	\begin{equation*}
	\langle\!\langle S,\Phi\rangle\!\rangle =\iint_{X\times X}\frac{\Phi(x,y)-\Phi(x,x)}{d(x,y)^{\gamma +2s}}d\mu(x) d\mu(y)
	\end{equation*}
	defines a distribution in $\mathscr{S}_2^{'}$;
	\item the function $\psi(x)=\int_X \frac{\Phi(x,y)-\Phi(x,x)}{d(x,y)^{\gamma+2s}} d\mu(y)$ belongs to $\mathscr{S}_1$;
	\item for $T=1$ and $S$ as in (b) we have
	\begin{equation*}
	Kir_{T,S} \Phi(x) = \int_X \frac{\Phi(x,y)-\Phi(x,x)}{d(x,y)^{\gamma+2s}} d\mu(y).
	\end{equation*}
\end{enumerate}
\end{proposition}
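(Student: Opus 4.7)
The plan is to transfer, step by step, the arguments of Lemmas~\ref{lem:VergenceFractDistributionCase1} and \ref{lem:VergenceFractBounded} and Proposition~\ref{propo:KirDivergenceFracCase1} from the Euclidean setting to the Ahlfors regular one, substituting the polar decomposition used there by dyadic annular integration against $\mu$. Throughout, $L$ denotes the Lipschitz constant of $\Phi$ and $K$ a compact subset of $X$ such that $\supp\Phi\subseteq K\times X$; the role of $\omega_{n-1}\abs{K}$ in the Euclidean proofs is now played by $c_2\mu(K)$.

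For \textit{(a)}, fix $x\in K$ and split $X=\set{y:d(x,y)<1}\cup\set{y:d(x,y)\geq 1}$. On the near region I would invoke the Lipschitz bound $\abs{\Phi(x,y)-\Phi(x,x)}\leq L\,d(x,y)$; on the far region, the crude estimate $\abs{\Phi(x,y)-\Phi(x,x)}\leq 2\norm{\Phi}_\infty$. Each region is then decomposed into dyadic annuli $\set{y:2^{-k-1}\leq d(x,y)<2^{-k}}$ or $\set{y:2^k\leq d(x,y)<2^{k+1}}$, and the Ahlfors upper bound $\mu(B(x,r))\leq c_2 r^\gamma$ is applied. The resulting series are geometric with ratios $2^{-(1-2s)}$ and $2^{-2s}$ respectively, and they converge precisely because $0<s<\tfrac12$. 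Integrating the resulting $x$-uniform bound over the compact $K$, of finite $\mu$-measure, yields \textit{(a)}. Part \textit{(b)} follows from the same estimate applied to a null sequence $\Phi_k\to 0$ in $\mathscr{S}_2$: the common bounded support together with uniform convergence to zero of $\Phi_k$ and of its Lipschitz constants drives $\abs{\langle\!\langle S,\Phi_k\rangle\!\rangle}$ to zero.

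Part \textit{(c)} is the main obstacle, since an Ahlfors space offers no translation to move the diagonal singularity of the kernel off of $x$, as is done in the proof of Lemma~\ref{lem:VergenceFractBounded}. Vanishing of $\psi$ outside $K$ is immediate, because for such $x$ one has $\Phi(x,y)=\Phi(x,x)=0$. For the continuity and Lipschitz-type regularity of $\psi$, setting $F(x,y)=\Phi(x,y)-\Phi(x,x)$ and $r=d(x_1,x_2)$, I write
\begin{equation*}
\psi(x_1)-\psi(x_2)=\int_X\frac{F(x_1,y)-F(x_2,y)}{d(x_1,y)^{\gamma+2s}}\,d\mu(y)+\int_X F(x_2,y)\biggl[\frac{1}{d(x_1,y)^{\gamma+2s}}-\frac{1}{d(x_2,y)^{\gamma+2s}}\biggr]\,d\mu(y),
\end{equation*}
split the $y$-integration into the near region $\set{y:\min(d(x_1,y),d(x_2,y))<2r}$ and its complement, and estimate each piece by the same annular summation used in \textit{(a)}. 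On the near region the diagonal cancellation $\abs{F(x_i,y)}\leq L\,d(x_i,y)$ absorbs the kernel; on the far region the mean value estimate $\abs{d(x_1,y)^{-(\gamma+2s)}-d(x_2,y)^{-(\gamma+2s)}}\lesssim r\,d(x_i,y)^{-(\gamma+2s+1)}$ combined with Lipschitz regularity of $\Phi$ in both entries reduces the problem to the same convergent sums already used. Finally, \textit{(d)} is an application of Fubini to the now absolutely convergent double integral, exactly as in Proposition~\ref{propo:KirDivergenceFracCase1}, and it identifies $Kir_{T,S}\Phi$ with $\psi$.
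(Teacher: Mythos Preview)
Your plan is exactly the paper's own: the proof there reads in full ``The proof follows the same lines of those in the first case in \S\ref{sec:FractionalKirchhoffDivergencesEuclideanSpace}.'' Your replacement of the polar integration by dyadic annular summation against $\mu$ is the natural Ahlfors substitute, and parts \textit{(a)}, \textit{(b)}, \textit{(d)} go through as you describe.

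One point deserves a flag in \textit{(c)}. Your decomposition of $\psi(x_1)-\psi(x_2)$ is correct, but if you run the estimates to the end you obtain a bound of order $r^{1-2s}$ with $r=d(x_1,x_2)$, not $r$. For example, on the far region your bound $\abs{F(x_1,y)-F(x_2,y)}\lesssim Lr$ together with $\int_{d(x_1,y)>2r} d(x_1,y)^{-\gamma-2s}\,d\mu(y)\sim r^{-2s}$ yields only $Lr^{1-2s}$; the near-region pieces give the same. This is in fact sharp: already on $X=\mathbb{R}$ with $\Phi(x,y)=f(y)$ for $f\in Lip_0$ one has $\psi=(-\Delta)^s f$, which is generically no better than $C^{1-2s}$. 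So your argument proves $\psi\in C^{1-2s}$ with bounded support rather than $\psi\in Lip_0(X)$, and the statement of \textit{(c)} appears to be slightly optimistic. This does not damage \textit{(d)}: the Fubini identity $\int_X\varphi\psi\,d\mu=\langle\!\langle S,\varphi\Phi\rangle\!\rangle$ requires only that $\psi$ be bounded with bounded support, which you have.
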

The proof follows the same lines of those in the first case in \S~\ref{sec:FractionalKirchhoffDivergencesEuclideanSpace}. A particular case which is interesting as a link between discrete and continuous cases is provided the dyadic settings introduced in \S~\ref{sec:DiracDeltasMetricMeasureSpaces}. Again, the situation could be introduced for very general families but the one dimensional case with the standard dyadic intervals provides all the ideas with a lower notational cost. We shall take in Proposition~\ref{propo:FractionalCase1LipschitzFunctions} $X=\mathbb{R}^+$, the set of nonnegative real numbers. Let $\mathcal{D}$ be the family of all dyadic intervals of $\mathbb{R}^+$, $\mathcal{D}=\bigcup_{j\in\mathbb{Z}}\mathcal{D}^j$, $\mathcal{D}^j=\{I^j_k: k=0,1,2,\ldots\}$, $I^j_k=[k2^{-j},(k+1)2^{-j})$. As in Section~\ref{sec:DiracDeltasMetricMeasureSpaces}, let $\rho(x,y)=\inf\{\abs{I}: I\in\mathcal{D} \textrm{ and } x,y\in I\}$. As it is easy to see $(\mathbb{R}^+,\rho,\abs{\cdot})$, with $\abs{\cdot}$ Lebesgue measure, is an Ahlfors space of dimension one. In fact, since $B_\rho(x,r)$ is the largest dyadic interval in $\mathbb{R}^+$ containing $x$ with length less than $r$, we have that for $j\in\mathbb{Z}$ such that $2^{j-1}<r\leq 2^j$, we have also that $2^{j-1}\leq \abs{B_\rho(x,r)}<2^j$. Hence $\tfrac{r}{2}\leq \abs{B_\rho(x,r)}\leq 2r$. Hence Proposition~\ref{propo:FractionalCase1LipschitzFunctions} can be applied in this space $(\mathbb{R}^+,\rho,\abs{\cdot})$. It is worthy noticing that the indicator functions of dyadic intervals are Lipschitz functions with respect to $\rho$ (see \cite{AiGoPetermichl}). Moreover, in \cite{AiBoGo13} it is shown that if $\Delta_s$ is the  $s$ Laplacian in this setting, i.e. if
\begin{equation*}
\Delta_s f(x) = \int_{\mathbb{R}^+}\frac{f(y) - f(x)}{\rho(x,y)^{1+2s}} dx dy,
\end{equation*}
then a complete system of eigenfunctions of $\Delta_s$ for $L^2(\mathbb{R}^+)$ is given by the Haar system. In other words with $c_s=\tfrac{2^{2s}}{2^{2s}-1}$ we have
\begin{equation*}
\Delta_s h = c_s \abs{\supp h}^{-2s} h,
\end{equation*}
for every $h\in\mathscr{H}=\{h^j_k(x)= 2^{j/2}h^0_0(2^j x-k): j\in\mathbb{Z}, k\geq 0\}$ with $h^0_0(x)=1$ in $[0,\tfrac{1}{2})$ and $h^0_0(x)=-1$ for $x\in [\tfrac{1}{2},1)$. This fact together with Proposition~\ref{propo:FractionalCase1LipschitzFunctions} give a formula for the Kirchhoff divergence in the dyadic setting which we state in the next result.

\begin{corollary}\label{coro:KirchhoffsFractionalHaarBasis}
Let $Kir_s \Phi$ be the Kirchhoff divergence operator provided by Proposition~\ref{propo:FractionalCase1LipschitzFunctions}
on the $1$-Ahlfors space $(\mathbb{R}^+,\rho,\abs{\cdot})$. Let $\mathscr{H}$ be the Haar basis of $L^2(\mathbb{R}^+)$. Then
\begin{equation*}
Kir_s \Phi(x) = c_s \sum_{h\in\mathscr{H}}\sum_{\widetilde{h}\in\mathscr{H}}\abs{\supp \widetilde{h}}^{-2s}\langle\!\langle \Phi, h\otimes\widetilde{h}\rangle\!\rangle h(x)\widetilde{h}(x);
\end{equation*}
where $(h\otimes\widetilde{h})(x,y)= h(x)\widetilde{h}(y)$ and
\begin{equation*}
\langle\!\langle \Phi, h\otimes\widetilde{h}\rangle\!\rangle = \iint_{\mathbb{R}^+\times\mathbb{R}^+}\Phi(y_1,y_2)h(y_1)\widetilde{h}(y_2) dy_1 dy_2
\end{equation*}
and $\Phi$ belongs to the linear span of the orthonormal basis of $L^2(\mathbb{R}^+\times\mathbb{R}^+)$ given by the tensor product $\mathscr{H}\otimes\mathscr{H}=\{h(x)\widetilde{h}(y): h, \widetilde{h}\in\mathscr{H}\}$.
\end{corollary}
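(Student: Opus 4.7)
The plan is to reduce the problem, by linearity, to evaluating $Kir_s$ on a single tensor product $h \otimes \widetilde{h}$ with $h,\widetilde{h}\in\mathscr{H}$, and then to combine Proposition~\ref{propo:FractionalCase1LipschitzFunctions}(d) with the spectral identity $\Delta_s \widetilde{h} = c_s |\supp\widetilde{h}|^{-2s}\widetilde{h}$ recalled just before the corollary.

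First I would expand $\Phi$. Since $\Phi$ belongs to the linear span of $\mathscr{H}\otimes\mathscr{H}$, it can be written as a \emph{finite} sum $\Phi = \sum_{h,\widetilde{h}} \alpha_{h,\widetilde{h}}\, h\otimes\widetilde{h}$, and the orthonormality of $\mathscr{H}\otimes\mathscr{H}$ in $L^2(\mathbb{R}^+\times\mathbb{R}^+)$ forces $\alpha_{h,\widetilde{h}} = \langle\!\langle \Phi, h\otimes\widetilde{h}\rangle\!\rangle$. The pairing $\langle\!\langle S, \varphi\Phi\rangle\!\rangle$ that implicitly defines $Kir_s\Phi$ via condition \textit{(1.2.b)} is linear in $\Phi$, so it suffices to evaluate $Kir_s(h\otimes\widetilde{h})$ for each fixed pair and then sum with the coefficients $\alpha_{h,\widetilde{h}}$.

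Second, before invoking the single-tensor computation, I would verify that $h\otimes\widetilde{h}$ actually belongs to $\mathscr{S}_2=Lip_0(\mathbb{R}^+\times\mathbb{R}^+)$ with respect to $\rho$, since Proposition~\ref{propo:FractionalCase1LipschitzFunctions} was stated for admissible tests in that class. This is the step that most needs care: the Haar functions are \emph{not} Euclidean-Lipschitz, but the reference \cite{AiGoPetermichl} quoted in the text establishes that indicator functions of dyadic intervals are Lipschitz with respect to $\rho$. Hence every Haar $h$, being a finite linear combination of such indicators with bounded support, lies in $Lip_0(\mathbb{R}^+,\rho)$, and a tensor product of two such functions lies in $Lip_0(\mathbb{R}^+\times\mathbb{R}^+)$ with respect to the product metric. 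With that in hand Proposition~\ref{propo:FractionalCase1LipschitzFunctions}(d) applies directly.

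Third, for a single tensor product the computation is
$$Kir_s(h\otimes\widetilde{h})(x) = \int_{\mathbb{R}^+} \frac{h(x)\widetilde{h}(y)-h(x)\widetilde{h}(x)}{\rho(x,y)^{1+2s}}\,dy = h(x)\int_{\mathbb{R}^+}\frac{\widetilde{h}(y)-\widetilde{h}(x)}{\rho(x,y)^{1+2s}}\,dy = h(x)\,\Delta_s\widetilde{h}(x),$$
because $h(x)$ factors out of the $y$-integral. Invoking $\Delta_s\widetilde{h} = c_s|\supp\widetilde{h}|^{-2s}\widetilde{h}$ gives $Kir_s(h\otimes\widetilde{h})(x) = c_s|\supp\widetilde{h}|^{-2s}\,h(x)\widetilde{h}(x)$, and summing over the (finite) expansion of $\Phi$ produces the announced formula. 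The main obstacle I expect is the admissibility check of step two: once one is satisfied that $\mathscr{H}\otimes\mathscr{H}\subset \mathscr{S}_2$ in the dyadic sense and that the finiteness of the expansion makes all interchanges of sum and integral trivial, the remainder is a transparent combination of linearity and the known spectral action of $\Delta_s$ on the Haar basis.
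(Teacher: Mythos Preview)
Your proposal is correct and follows essentially the same approach as the paper: expand $\Phi$ as a finite sum in the Haar tensor basis, factor $h(x)$ out of the $y$-integral to recognize $\Delta_s\widetilde{h}$, and apply the spectral identity $\Delta_s\widetilde{h}=c_s|\supp\widetilde{h}|^{-2s}\widetilde{h}$. Your admissibility check in step two is a bit more explicit than the paper's version, which simply takes for granted (via the remark citing \cite{AiGoPetermichl}) that Haar functions lie in $Lip_0(\mathbb{R}^+,\rho)$.
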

\begin{proof}
Since $\Phi(x,y)=\sum_{h,\widetilde{h}\in\mathscr{H}} \langle\!\langle \Phi, h\otimes\widetilde{h}\rangle\!\rangle h(x)\widetilde{h}(y)$ and the sum is  finite, then
\begin{align*}
Kir_s \Phi (x) &= \int_{\mathbb{R}^+}\frac{\Phi(x,y)-\Phi(x,x)}{\rho(x,y)^{1+2s}} dy\\
&= \sum_{h, \widetilde{h}\in\mathscr{H}} \langle\!\langle \Phi, h\otimes\widetilde{h}\rangle\!\rangle h(x) \int_{y\in\mathbb{R}^+}\frac{\widetilde{h}(y)-\widetilde{h}(x)}{\rho(x,y)^{1+2s}}dy\\
&= \sum_{h, \widetilde{h}\in\mathscr{H}} \langle\!\langle \Phi, h\otimes\widetilde{h}\rangle\!\rangle h(x) \Delta_s \widetilde{h}(x)\\
&= c_s\sum_{h, \widetilde{h}\in\mathscr{H}} \langle\!\langle \Phi, h\otimes\widetilde{h}\rangle\!\rangle \abs{\supp\widetilde{h}}^{-2s}h(x) \widetilde{h}(x).
\end{align*}
Notice that as in Section~\ref{sec:ClassicalLaplacianRn} the above formula is a spectral version of $\Delta_{s,y}\Phi(x,x)$ and the underlying distribution $S$ in $\mathbb{R}^+\times \mathbb{R}^+$ is again $\Delta_{s,y}\mu$ where $\mu$ is the length in the diagonal.
\end{proof}

Let us finally observe that the results in \S~\ref{sec:FractionalKirchhoffDivergencesEuclideanSpace} and \S~\ref{sec:FractionalKirchhoffdivergenceAhlforsSpaces} can be extended to the more general kernels that have been considered as natural settings for some evolution equations of nonlinear variational type. See \cite{CaChaVa11} and \cite{CaSi14}, where the regularity theory of solutions of the diffusion associated to the Euler-Lagrange equation is considered. The generality of this type of kernels which do not need to be of convolution type, fits naturally in the general framework that we are considering. On the other hand, at least for the basic aspects of the theory, they have natural extensions to Ahlfors type metric spaces.

Let $X=\mathbb{R}^n$, $\mathscr{S}_1=\mathscr{C}_c(\mathbb{R}^n)$,  $\mathscr{S}_2=\mathscr{C}_c(\mathbb{R}^n\times \mathbb{R}^n)$. As in Section~\ref{sec:FractionalKirchhoffDivergencesEuclideanSpace} we shall consider $T=1$ and the master kernel $\mathcal{K}$ will define the distribution $S$ in $\mathscr{S}_2^{'}$.

 In \cite{CaChaVa11} the authors consider a symmetric kernel $\mathcal{K}$ defined on $\mathbb{R}^n\times\mathbb{R}^n$ such that for some $0<\sigma<1$ and for some positive constants $c_1\leq c_2$, satisfies the inequalities
\begin{equation}\label{eq:estimateskernelmasterequationtype}
c_1\mathcal{X}_{\{(x,y): \abs{x-y}<1\}}(x,y)\frac{1}{\abs{x-y}^{n+\sigma}}\leq \mathcal{K}(x,y)\leq\frac{c_2}{\abs{x-y}^{n+\sigma}}.
\end{equation}
With these estimates for the kernel $\mathcal{K}$ the arguments in the first case ($0<s<\tfrac{1}{2}$) in Section~\ref{sec:FractionalKirchhoffDivergencesEuclideanSpace} can be adapted to find a distribution $S_\sigma\in\mathscr{S}_2^{'}=\mathscr{C}_c^\infty(\mathbb{R}^{2n})$ such that $Kir_{T,S_\sigma}(f(y)-f(x))=\Delta_{T,S_\sigma}$ coincides with the operator
$\int_{\mathbb{R}^n}[f(y)-f(x)]\mathcal{K}(x,y) dy$
which is the Euler-Lagrange equation with quadratic energy $\iint_{\mathbb{R}^n\times\mathbb{R}^n}[f(x)-f(y)]^2\mathcal{K}(x,y) dx dy$.


The above situation extends naturally to Ahlfors regular metric spaces. With the notation of Section~\ref{sec:FractionalKirchhoffdivergenceAhlforsSpaces}, let $(X,d,\mu)$ be a $\gamma$-Ahlfors space. In this setting the upper bound in \eqref{eq:estimateskernelmasterequationtype} takes the form
\begin{equation*}\label{eq:AhlforsUpperBoundMasterEquation}
\mathcal{K}(x,y) \leq\frac{C}{d(x,y)^{\gamma+\sigma}},\quad x\neq y.
\end{equation*}
With similar arguments to those in Lemma~\ref{lem:VergenceFractDistributionCase1} and Proposition~\ref{propo:KirDivergenceFracCase1} we obtain the following result.
\begin{proposition}
Let $0<\sigma<1$, $(X,d,\mu)$ $\gamma$-Ahlfors, $\mathscr{S}_i$, $i=1,2$,  the Lipschitz functions with bounded support in $X$ and $X\times X$ respectively. Let $\mathcal{K}:X\times X\to\mathbb{R}$ be a nonnegative measurable kernel satisfying \eqref{eq:estimateskernelmasterequationtype}. Then, with $\proin{T}{\varphi}=\int_X\varphi d\mu$ and $\langle\!\langle S_\sigma,\Phi\rangle\!\rangle = \iint_{X\times X}[\Phi(x,y)-\Phi(x,x)] \mathcal{K}(x,y) d\mu(x)d\mu(y)$
we have
\begin{equation*}
Kir_{T,S_\sigma}\Phi(x) = \int [\Phi(x,y)-\Phi(x,x)]\mathcal{K}(x,y) d\mu(y).
\end{equation*}
\end{proposition}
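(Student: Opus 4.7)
The plan is to mirror the structure used in Section~\ref{sec:FractionalKirchhoffDivergencesEuclideanSpace} (first case) and in Proposition~\ref{propo:FractionalCase1LipschitzFunctions}, replacing the Euclidean dimension $n$ by the Ahlfors exponent $\gamma$ and using only the upper bound on $\mathcal{K}$ together with the Lipschitz regularity of $\Phi$. Three facts need to be established: (i) the integral defining $\langle\!\langle S_\sigma,\Phi \rangle\!\rangle$ is absolutely convergent and $S_\sigma$ is continuous on $\mathscr{S}_2$; (ii) the pointwise formula $\psi(x)=\int_X[\Phi(x,y)-\Phi(x,x)]\mathcal{K}(x,y)d\mu(y)$ defines a function in $\mathscr{S}_1$; (iii) the pair $(T,\psi)$ satisfies \textit{(1.2.b)} against every $\varphi\in\mathscr{S}_1$ through a Fubini argument.

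The technical heart is (i). Fix $\Phi\in\mathscr{S}_2$ and let $K\subset X$ be compact with $\supp\Phi\subseteq K\times X$. Using the Lipschitz constant $L$ of $\Phi$ in the second variable and the uniform bound $\norm{\Phi}_\infty$, split the $y$-integral at $d(x,y)=1$:
\begin{equation*}
\abs{\Phi(x,y)-\Phi(x,x)}\mathcal{K}(x,y) \leq \begin{cases} C L\,d(x,y)^{1-\gamma-\sigma} & \text{if } d(x,y)<1,\\ 2C\norm{\Phi}_\infty d(x,y)^{-\gamma-\sigma} & \text{if } d(x,y)\geq 1.\end{cases}
\end{equation*}
An annular decomposition using the $\gamma$-Ahlfors condition $\mu(B(x,r))\simeq r^\gamma$ gives
$\int_{d(x,y)<1}d(x,y)^{1-\gamma-\sigma}d\mu(y)\lesssim\sum_{k\geq 0}2^{-k(1-\sigma)}<\infty$ (since $\sigma<1$), and similarly $\int_{d(x,y)\geq 1}d(x,y)^{-\gamma-\sigma}d\mu(y)\lesssim\sum_{k\geq 0}2^{-k\sigma}<\infty$ (since $\sigma>0$). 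Integrating in $x\in K$ yields a bound by $\mu(K)\bigl(c_\sigma L+c'_\sigma\norm{\Phi}_\infty\bigr)$, hence absolute integrability and, by replacing $\Phi$ with a test sequence $\Phi_k\to 0$ in $\mathscr{S}_2$ sharing a common compact support, the continuity of $S_\sigma$.

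For (ii), the same bounds applied pointwise show $\abs{\psi(x)}\leq c_\sigma L+c'_\sigma\norm{\Phi}_\infty$ and that $\psi$ vanishes off the first projection of $\supp\Phi$. The Lipschitz regularity of $\psi$ follows from the Lipschitz character of $\Phi$ in both variables together with the same annular estimates: for $x,x'$ in a compact set one writes $\psi(x)-\psi(x')$ as an integral of $[\Phi(x,y)-\Phi(x,x)]-[\Phi(x',y)-\Phi(x',x')]$ against $\mathcal{K}(x,y)$ plus an error from swapping $\mathcal{K}(x,y)$ with $\mathcal{K}(x',y)$, but since we only use the upper bound on $\mathcal{K}$ and the Lipschitz estimate already controls the numerator by $2L\,d(x,x')$ on the near zone and by the uniform bound on the far zone, the same convergent series give $\abs{\psi(x)-\psi(x')}\leq C\,d(x,x')$. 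Thus $\psi\in\mathscr{S}_1$ and so $\varphi\psi\in\mathscr{S}_1$ for any $\varphi\in\mathscr{S}_1$, proving \textit{(1.2.a)}.

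Step (iii) is then a direct application of Fubini-Tonelli. Given $\varphi\in\mathscr{S}_1$, the function $(x,y)\mapsto\varphi(x)[\Phi(x,y)-\Phi(x,x)]\mathcal{K}(x,y)$ is absolutely integrable over $X\times X$ by the estimate of (i) combined with $\norm{\varphi}_\infty$, so
\begin{equation*}
\langle\!\langle S_\sigma,\varphi\Phi\rangle\!\rangle=\iint_{X\times X}\varphi(x)[\Phi(x,y)-\Phi(x,x)]\mathcal{K}(x,y)d\mu(x)d\mu(y)=\int_X\varphi(x)\psi(x)d\mu(x)=\proin{T}{\varphi\psi},
\end{equation*}
which is exactly \textit{(1.2.b)}. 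The main obstacle I anticipate is the careful splitting of the $y$-integral and the correct bookkeeping of the two convergence conditions $\sigma<1$ and $\sigma>0$; both are used, and both are supplied by the hypothesis $0<\sigma<1$. Everything else is a transcription of the Euclidean proof with $n$ replaced by $\gamma$ and Lebesgue measure replaced by the Ahlfors measure $\mu$.
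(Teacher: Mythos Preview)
Your approach is exactly what the paper intends: it explicitly says the result follows ``with similar arguments to those in Lemma~\ref{lem:VergenceFractDistributionCase1} and Proposition~\ref{propo:KirDivergenceFracCase1}'', and your steps (i) and (iii) are precisely those two arguments transported to the Ahlfors setting with the general kernel $\mathcal{K}$. The annular decomposition and the Fubini computation are correct as written.

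One caution about your step (ii). The paper's reference covers only the integrability lemma and the Fubini identity; it does not invoke an analogue of Lemma~\ref{lem:VergenceFractBounded} here, and your argument for the Lipschitz regularity of $\psi$ does not go through as stated. When you write $\psi(x)-\psi(x')$ you must compare integrals against two \emph{different} kernels $\mathcal{K}(x,\cdot)$ and $\mathcal{K}(x',\cdot)$, and the hypothesis \eqref{eq:estimateskernelmasterequationtype} gives no control on $\mathcal{K}(x,y)-\mathcal{K}(x',y)$. Indeed, taking $\mathcal{K}(x,y)=a(x)\,d(x,y)^{-\gamma-\sigma}$ with $a$ merely bounded and measurable shows that $\psi$ need not be continuous, let alone Lipschitz. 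So the sentence in which you absorb the ``error from swapping $\mathcal{K}(x,y)$ with $\mathcal{K}(x',y)$'' into the upper bound is not justified. This does not affect the formula in the proposition, which is established by (i) and (iii) alone; it only means that condition \textit{(1.2.a)} should be read in the weaker sense that $\psi$ is bounded with bounded support (so that $\proin{T}{\varphi\psi}=\int_X\varphi\psi\,d\mu$ makes sense), which is all your estimates in (i) actually deliver and all the paper uses.
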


\section{Some examples of \textit{(1.2.b)} with $T$ of positive order}\label{sec:SomeExamples12bTpositiveOrder}

So far we have considered examples of solutions of \textit{(1.2.b)} where the distributions $T$ are given by measures. In this section we aim to provide some examples with $T$ of positive order in the sense of distributions.

A simple Schwartz distribution in $\mathbb{R}$ which is neither a function nor a measure because it needs some positive regularity, aside of continuity, of the test functions is the principal value of $\tfrac{1}{x}$. The relevance of this distribution is that it is the kernel of the Hilbert transform. The paradigmatic singular integral operator.

Usually the distribution $p.v.\tfrac{1}{x}$ in $\mathscr{D}^{'}(\mathbb{R})$ is defined by
\begin{equation*}
\proin{p.v. \frac{1}{x}}{\varphi} = \lim_{\varepsilon\to 0}\int_{\abs{x}>\varepsilon}\frac{\varphi(x)}{x} dx,
\end{equation*}
for $\varphi\in\mathscr{C}^\infty_c(\mathbb{R})$. Since $p.v.\tfrac{1}{x}$ extends to $\mathscr{S}(\mathbb{R})$ the class of Schwartz of test functions it has a well defined Fourier transform which is a constant times the sign function on the frequency domain. The convolution of $p.v.\tfrac{1}{x}$ with a test function $\eta\in\mathscr{C}_c^\infty(\mathbb{R})$ is the Hilbert transform $H\eta (x)=\lim_{\varepsilon\to 0}\int_{\abs{x-y}>\varepsilon}\frac{\eta(y)}{x-y} dy$.

The very definition of $p.v.\tfrac{1}{x}$ allows to see this distribution as a limit, in the sense of distributions, of a sequence of functions. In fact, for each $\varepsilon>0$, $h_\varepsilon(x)=\tfrac{1}{x}\mathcal{X}_{\{\abs{x}>\varepsilon\}}(x)$ belongs to $L^1_{loc}(\mathbb{R})$ and hence to $\mathscr{D}^{'}$. Moreover $h_\varepsilon \overset{\mathscr{D}^{'}(\mathbb{R})}{\longrightarrow} p.v.\tfrac{1}{x}$. In our current situation the fact that each function $h_\varepsilon$ has a vanishing interval, namely $[-\varepsilon,\varepsilon]$, is not good for our division problem. The next elementary lemma gives a better adapted way of defining $p.v.\tfrac{1}{x}$.

\begin{lemma}
Set $h^\varepsilon = h_\varepsilon +\tfrac{\sigma}{\varepsilon}\mathcal{X}_{[-\varepsilon,\varepsilon]}$, where $\sigma$ is the sign function. Then $h^\varepsilon\in L^1_{loc}(\mathbb{R})$ and $h^\varepsilon\to p.v.\tfrac{1}{x}$ in $\mathscr{D}^{'}(\mathbb{R})$ as $\varepsilon\to 0$.
\end{lemma}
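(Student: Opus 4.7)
The plan is to verify the two claims directly. For local integrability, I would observe that $h_\varepsilon$ is bounded on any compact set $[-R,R]$ by $1/\varepsilon$ (it vanishes for $|x|\leq\varepsilon$ and equals $1/x$ otherwise), hence $h_\varepsilon\in L^1_{\mathrm{loc}}(\mathbb{R})$; the second summand $\frac{\sigma}{\varepsilon}\mathcal{X}_{[-\varepsilon,\varepsilon]}$ is bounded and compactly supported, so also in $L^1_{\mathrm{loc}}$. Summing, $h^\varepsilon\in L^1_{\mathrm{loc}}(\mathbb{R})$.

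For the distributional convergence, I would fix $\varphi\in\mathscr{C}_c^\infty(\mathbb{R})$ and split
\begin{equation*}
\proin{h^\varepsilon}{\varphi}=\int_{\abs{x}>\varepsilon}\frac{\varphi(x)}{x}\,dx+\frac{1}{\varepsilon}\int_{-\varepsilon}^{\varepsilon}\sigma(x)\varphi(x)\,dx.
\end{equation*}
The first term converges to $\proin{p.v.\tfrac{1}{x}}{\varphi}$ as $\varepsilon\to 0$ by the very definition of the principal value. The task is then to show that the second (``corrective'') term vanishes in the limit.

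For the second term, I would use the oddness of $\sigma$ to rewrite
\begin{equation*}
\frac{1}{\varepsilon}\int_{-\varepsilon}^{\varepsilon}\sigma(x)\varphi(x)\,dx=\frac{1}{\varepsilon}\int_{0}^{\varepsilon}\bigl(\varphi(x)-\varphi(-x)\bigr)\,dx,
\end{equation*}
and then invoke the smoothness of $\varphi$: since $\varphi(x)-\varphi(-x)=2\varphi'(0)x+O(x^2)$ uniformly on $[-\varepsilon,\varepsilon]$ (for $\varepsilon$ small, with the implicit constant controlled by $\norm{\varphi''}_\infty$), integrating from $0$ to $\varepsilon$ gives a quantity of order $\varepsilon$, which tends to $0$. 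Combined with the principal-value limit from the first term, this shows $\proin{h^\varepsilon}{\varphi}\to\proin{p.v.\tfrac{1}{x}}{\varphi}$ for every $\varphi\in\mathscr{C}_c^\infty(\mathbb{R})$, which is exactly convergence in $\mathscr{D}^{'}(\mathbb{R})$.

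There is no serious obstacle here; the only point that needs a little care is the exploitation of the odd symmetry of $\sigma$ to gain an order-one vanishing from what would otherwise be an $O(1)$ integral. Alternatively, one may note that $\tfrac{\sigma}{\varepsilon}\mathcal{X}_{[-\varepsilon,\varepsilon]}$ is, up to normalization, an approximate identity of the distributional derivative of $\mathcal{X}_{[-\varepsilon,\varepsilon]}$ scaled so that its pairing with any function smooth at the origin tends to $0$; this viewpoint makes the cancellation with the missing ``hole'' $[-\varepsilon,\varepsilon]$ in $h_\varepsilon$ transparent.
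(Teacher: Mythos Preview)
Your proof is correct and follows essentially the same approach as the paper: reduce to showing that the corrective term $\tfrac{\sigma}{\varepsilon}\mathcal{X}_{[-\varepsilon,\varepsilon]}$ tends to zero in $\mathscr{D}'(\mathbb{R})$, and exploit the odd symmetry of $\sigma$ to get cancellation. The only minor difference is that the paper writes the corrective term as $\tfrac{1}{\varepsilon}\int_0^\varepsilon\varphi-\tfrac{1}{\varepsilon}\int_{-\varepsilon}^0\varphi$ and observes that each average converges to $\varphi(0)$ by mere continuity, whereas you Taylor-expand $\varphi(x)-\varphi(-x)$ to obtain a quantitative $O(\varepsilon)$ bound; your argument is slightly more than what is needed, but entirely valid.
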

\begin{proof}
It is enough to show that $\tfrac{\sigma}{\varepsilon}\mathcal{X}_{[-\varepsilon,\varepsilon]}\overset{\mathscr{D}^{'}(\mathbb{R})}{\longrightarrow} 0$. Let $\varphi\in\mathscr{C}_c^\infty(\mathbb{R})$, then
\begin{equation*}
\int_{\mathbb{R}}\frac{\sigma(x)}{\varepsilon}\mathcal{X}_{[-\varepsilon,\varepsilon]}(x) \varphi(x) dx
= \frac{1}{\varepsilon}\int_0^\varepsilon \varphi(x) dx - \frac{1}{\varepsilon}\int_{-\varepsilon}^0 \varphi(x) dx
\end{equation*}
which converges to $\varphi(0)-\varphi(0)=0$ for $\varepsilon\to 0$, as desired.
\end{proof}

For positive $\varepsilon$, with the above notation, set, for $\Theta\in\mathscr{C}_c^\infty(\mathbb{R}^2)$
\begin{equation*}
\langle\!\langle S_\varepsilon,\Theta \rangle\!\rangle = \iint_{\mathbb{R}^2}\Phi(x,y)h_\varepsilon(x-y) dx dy =
\iint_{\abs{x-y}>\varepsilon}\Phi(x,y)\frac{1}{x-y} dx dy.
\end{equation*}
Since $h_\varepsilon(x-y)$ is bounded in $\mathbb{R}^2$, $S\varepsilon$ is well defined as a distribution in $\mathscr{D}^{'}(\mathbb{R}^2)$. So that equation \textit{(1.2.b)} with $T_\varepsilon$ the distribution in $\mathbb{R}$ induced by the locally integrable and non-vanishing function $h^\varepsilon$, and $S_\varepsilon$ defined above has the solution
\begin{equation}\label{eq:KirchhoffDivergenceHilbertTransformEpsilon}
Kir_\varepsilon \Phi (x) = \frac{1}{h^\varepsilon(x)}\int_{\{y:\abs{x-y}>\varepsilon\}}\Phi(x,y)\frac{1}{x-y} dy
\end{equation}
for every $\varepsilon>0$. Actually the above division of distributions is possible for $\varepsilon>0$ and the limit for $\varepsilon$ tending to zero is well defined.

\begin{proposition}
For a two variable function $\Theta(x,y)$ set $H_y\Theta(x,z)$ to denote the Hilbert transform of $\theta$ for fixed $x$, as a function of $y$, evaluated at $z$. Then
\begin{enumerate}[(i)]
\item $Kir_\varepsilon \Phi(x)\to x H_y\Phi(x,x)$, for $\varepsilon\to 0$ for every $x\in\mathbb{R}$;
\item for $T=p.v.\tfrac{1}{x}$ and $\langle\!\langle S,\Theta \rangle\!\rangle = \int_{\mathbb{R}} H_y\Theta (x,x) dx$ we have that $Kir_{T,S}\Phi(x)=x H_y\Phi(x,x)$.
\end{enumerate}
\end{proposition}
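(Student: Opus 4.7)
The strategy for (i) is a direct pointwise limit from the explicit formula \eqref{eq:KirchhoffDivergenceHilbertTransformEpsilon}, while (ii) reduces to checking the defining identity \textit{(1.2.b)} by pushing the Hilbert transform in $y$ through the $x$-dependent factor $\varphi(x)$.

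For part (i), I would fix $x\neq 0$ and observe that for $\varepsilon<\abs{x}$ the correction term $\tfrac{\sigma}{\varepsilon}\mathcal{X}_{[-\varepsilon,\varepsilon]}$ vanishes at $x$, so $h^\varepsilon(x)=1/x$ and consequently $1/h^\varepsilon(x)=x$. Simultaneously, for this fixed $x$ the function $y\mapsto\Phi(x,y)$ lies in $\mathscr{C}^\infty_c(\mathbb{R})$, and the truncated integral
\begin{equation*}
\int_{\{y:\abs{x-y}>\varepsilon\}}\frac{\Phi(x,y)}{x-y}\,dy
\end{equation*}
is precisely the $\varepsilon$-truncated Hilbert transform of this function evaluated at $x$; by the standard existence of the principal value on smooth compactly supported functions it converges to $H_y\Phi(x,x)$ as $\varepsilon\to 0^+$. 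Multiplying these two factors gives $Kir_\varepsilon\Phi(x)\to xH_y\Phi(x,x)$. At $x=0$ the formula degenerates since $h^\varepsilon(0)=0$, but this is harmless as $\{0\}$ is a null set for $T=p.v.\tfrac{1}{x}$ and the target $xH_y\Phi(x,x)$ also vanishes there.

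For part (ii), I plug $\psi(x)=xH_y\Phi(x,x)$ into \textit{(1.2.b)}. The left-hand side collapses because the numerator contains an $x$ that cancels the singularity of $1/x$:
\begin{equation*}
\proin{T}{\varphi\psi}=\lim_{\varepsilon\to 0}\int_{\abs{x}>\varepsilon}\frac{\varphi(x)\,x\,H_y\Phi(x,x)}{x}\,dx=\int_{\mathbb{R}}\varphi(x)H_y\Phi(x,x)\,dx.
\end{equation*}
The right-hand side is $\langle\!\langle S,\varphi\Phi\rangle\!\rangle=\int_{\mathbb{R}}H_y(\varphi\Phi)(x,x)\,dx$. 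Since $\varphi$ does not depend on $y$, it factors through the $y$-Hilbert transform: $H_y(\varphi\Phi)(x,z)=\varphi(x)H_y\Phi(x,z)$. Setting $z=x$ matches the left-hand side, verifying \textit{(1.2.b)} and hence identifying $\psi=Kir_{T,S}\Phi$.

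The main subtlety I anticipate is checking that $\varphi\psi\in\mathscr{S}_1$, so that the pairing $\proin{T}{\varphi\psi}$ is legitimate in the sense of \textit{(1.2.a)}. The map $x\mapsto H_y\Phi(x,x)$ is smooth in $x$ (by differentiation under the principal value, using the Schwartz-class behavior of $\Phi(x,\cdot)$ in $y$ uniformly in $x$ on compact sets), but it is generally not compactly supported; the cutoff $\varphi\in\mathscr{C}^\infty_c(\mathbb{R})$ however restores compact support, so $\varphi\psi\in\mathscr{C}^\infty_c(\mathbb{R})$. A minor companion verification is the continuity in $x$ of $H_y\Phi(x,x)$, which follows from standard estimates for the Hilbert transform applied to the parameter-dependent family $y\mapsto\Phi(x,y)$.
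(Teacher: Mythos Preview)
Your argument is correct and matches the paper's own proof almost line for line: both compute $1/h^\varepsilon(x)$ explicitly, pass to the pointwise limit of the truncated Hilbert transform for (i), and for (ii) cancel the factor $x$ against $1/x$ in the pairing with $T$ before using $H_y(\varphi\Phi)=\varphi\, H_y\Phi$. The only cosmetic difference is that the paper writes $1/h^\varepsilon(x)=\varepsilon\sigma(x)\mathcal{X}_{\{\abs{x}\le\varepsilon\}}(x)+x\mathcal{X}_{\{\abs{x}>\varepsilon\}}(x)$ in one piece, which makes the case $x=0$ literally yield $Kir_\varepsilon\Phi(0)=0$ rather than requiring your null-set remark; this sharpens your handling of $x=0$ into an honest pointwise statement, but the substance is the same.
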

\begin{proof}
\textit{(i).} From equation \eqref{eq:KirchhoffDivergenceHilbertTransformEpsilon} and the definition of $h^\varepsilon$, we have
\begin{equation*}
Kir_\varepsilon \Phi(x) = \left(\varepsilon\sigma(x) \mathcal{X}_{\{\abs{x}\leq\varepsilon\}}(x) + x\mathcal{X}_{\{\abs{x}>\varepsilon\}}(x) \right) \int_{\{\abs{x-y}>\varepsilon\}}\Phi(x,y)\frac{dy}{x-y}.
\end{equation*}
Since the sections of $\Phi$ belong to $\mathscr{C}_c^\infty(\mathbb{R})$, we may take the limit for $\varepsilon$ going to zero to obtain \textit{(i)}.

\noindent\textit{(ii).} We have to check \textit{(1.2.b)} with $T$ and $S$ given in the statement. For $\varphi\in\mathscr{C}_c^\infty(\mathbb{R})$, we have
\begin{align*}
\proin{T}{\varphi(x) xH_y\Phi(x,x)} &= \lim_{\varepsilon\to 0}\int_{\abs{x}>\varepsilon}\frac{1}{x}[\varphi(x) x H_y\Phi(x,x)]dx\\
&=  \lim_{\varepsilon\to 0}\int_{\abs{x}>\varepsilon}\varphi(x) H_y\Phi(x,x) dx\\
&= \int_{\mathbb{R}}\varphi(x) H_y\Phi(x,x) dx\\
&= \int_{\mathbb{R}}H_y(\varphi\Phi)(x,x) dx\\
&= \langle\!\langle S,\varphi\Phi \rangle\!\rangle.
\end{align*}
Hence
\begin{equation*}
Kir_{T,S}\Phi(x) = x H_y\Phi(x,x),
\end{equation*}
as desired.
\end{proof}

The corresponding Laplacian type operator is given by
\begin{equation*}
\Delta_{T,S}f(x) = x Hf(x)
\end{equation*}
with the standard agreement of $H\,1=0$.

\section{Some convergence results}\label{sec:SomeConvergenceResults}

The issue of convergence of a sequence $Kir_k\Phi$; $k=0,1,2,\ldots$ associated to sequences $T_k$ and $S_k$ of distributions, presents several points of view  and several questions which could be of interest. Some of these aspects are classical. Such is the case of approximation of ``continuous'' operators by discrete operators. In this direction the example introduced in Proposition~\ref{propo:LaplacianOperatorFiniteDifference} of Section~\ref{sec:DiracDeltasEuclideanSpaces} is paradigmatic. Finite differences (graph structures) approximating the classical Laplacian on $\mathbb{R}^n$. Also the discrete fractional Laplacian in Proposition~\ref{propo:DiscreteFractionalLaplacian} can be viewed as a discrete approximation of the classical Laplacian. Less known and perhaps more difficult, but certainly more interesting looks the problem of searching the conditions on sequences of graphs such that the corresponding Kirchhoff divergences, and the corresponding Laplacians, converge to some
operator worthy of being considered a divergence or a Laplacian.

Let us first observe that the convergence of $T_k$ and $S_k$ in the sense of distributions of $\mathscr{S}_1^{'}$ and $\mathscr{S}_2^{'}$ respectively is not enough to have the convergence of $Kir_k = Kir_{T_k,S_k}$ as $k$ tends to infinity. In fact, take for example in $\mathscr{S}(\mathbb{R})$ the function $\eta(x)=\tfrac{1}{\sqrt{\pi}}e^{-x^2}$, the Gaussian function and $\eta_k(x)=k\eta(kx)$. Set $T_k$ to denote the distribution in $\mathscr{S}^{'}(\mathbb{R})$ induced by the integrable function $\eta_k$. Let $S=S_k$ be the Schwartz distribution in $\mathscr{S}^{'}(\mathbb{R}^2)$ induced by the area measure $dx dy$ in $\mathbb{R}^2$, that is $\langle\!\langle S_k,\Theta \rangle\!\rangle=\iint_{\mathbb{R}^2}\Theta(x,y) dx dy$, for every $k=0,1,2,\ldots$. Then, for $\Phi\in\mathscr{S}(\mathbb{R}^2)$ we have
\begin{equation*}
Kir_k \Phi(x) = \frac{1}{\eta_k(x)}\int_{y\in\mathbb{R}}\Phi(x,y) dy = \frac{\sqrt{\pi}}{k} e^{k^2x^2}\int_{y\in\mathbb{R}}\Phi(x,y) dy.
\end{equation*}
Which tends to zero when $x=0$ and when $x$ does not belong to the first projection of the support of $\Phi$. And for $\Phi\geq 0$, tends to $+\infty$ when $\int\Phi(x,y)dy$ is positive. In terms of Proposition~\ref{propo:IndependenceCaseKirchhoffLaplacian}, what happens in this example is that even when for each $k$ we have that $dx$ is absolutely continuous with respect to $\eta_k(x) dx$, this is no longer true for the limit since $\eta_k\to\delta_0$ as $k\to\infty$ in the sense of $\mathscr{S}^{'}(\mathbb{R})$.

On the other hand some simultaneous concentration of the measures defining $T_k$ and $S_k$ could allow the existence of a limit. In fact, take $T_k$ as before and $\langle\!\langle S_k,\Theta \rangle\!\rangle=\iint_{\mathbb{R}^2}\Theta(x,y)\zeta_k(x) dx dy$, with $\zeta_k(x)=k\zeta(kx)$ and $\zeta$ a probability density, i.e. $\int_{\mathbb{R}}\zeta dx=1$.

Now $Kir_k\Phi(x) = \frac{\zeta(kx)}{\eta(kx)}\int_{y\in\mathbb{R}}\Phi(x,y) dy$, and the limit for $k$ going to infinity depends on the relative size of the tails of $\zeta$ with respect to the Gaussian tails. Notice that $Kir_k \Phi(0)=\sqrt{\pi}\eta(0)\int_{y\in\mathbb{R}}\Phi(0,y)dy$ and also that if $\zeta$ has compact support then
\begin{equation*}
\lim_{k\to\infty}Kir_k\Phi(x) =
\begin{cases}
\sqrt{\pi}\eta(0)\int_{y\in\mathbb{R}}\Phi(0,y)dy, & x=0;\\
0, & \textrm{for } x\neq 0.
\end{cases}
\end{equation*}
On the other hand if $\zeta$ has heavy tails, like Cauchy distributions, then for $x\neq 0$, $Kir_k\Phi(x)$ tends to infinity, when $x$ belongs to the first projection of the support of $\Phi$ and $\Phi\geq 0$ and has positive integral.

With the notation of Section~\ref{sec:GeneralMeasures}, we have more interesting convergence cases when $T_k$ approaches $\delta_0$ in $\mathbb{R}$ with some specific rate and the measure $\Pi_k$ in $\mathbb{R}^2$ concentrates, with decreasing mass, about the diagonal of $\mathbb{R}^2$. Let us write an elementary case of this observation in the next statement.
\begin{proposition}\label{propo:LimitSequenceKirchhofftoDeltaDistribution}
Let $P(x)=\tfrac{1}{\pi}\tfrac{1}{1+x^2}$, $P_k(x)=kP(kx)$, $k=1,2,3,\ldots$ and $\proin{T_k}{\varphi}=\int_{\mathbb{R}}\varphi(x) P_k(x) dx$. Let $\Pi_k$ be the measure defined on the Borel sets of $\mathbb{R}^2$ by $\Pi_k(A)=\iint_A\mathcal{X}_{[-\tfrac{1}{k},\tfrac{1}{k}]}(x-y) dx dy$. Set $\langle\!\langle S_k,\Theta \rangle\!\rangle=\iint_{\mathbb{R}^2}\Theta d\Pi_k$. Then,
\begin{equation*}
\lim_{k\to\infty} Kir_k\Phi(x) = 2\pi x^2\Phi(x,x).
\end{equation*}
\end{proposition}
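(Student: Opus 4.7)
The plan is to derive an explicit formula for $Kir_k\Phi$ from equation \textit{(1.2.b)} and then pass to the limit using continuity of $\Phi$ on the diagonal.

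First I would identify the candidate $\psi=Kir_k\Phi$. Since $T_k$ is induced by the strictly positive continuous density $P_k$ and $\Pi_k$ is the positive Borel measure with density $\mathcal{X}_{[-1/k,1/k]}(x-y)$ with respect to Lebesgue measure on $\mathbb{R}^2$, equation \textit{(1.2.b)} becomes
\begin{equation*}
\int_{\mathbb{R}}\varphi(x)\psi(x) P_k(x)\,dx = \iint_{\mathbb{R}^2}\varphi(x)\Phi(x,y)\mathcal{X}_{[-1/k,1/k]}(x-y)\,dx\,dy
\end{equation*}
for every $\varphi\in\mathscr{S}_1$. Applying Fubini on the right-hand side gives
\begin{equation*}
\int_{\mathbb{R}}\varphi(x)\Bigl(\int_{x-1/k}^{x+1/k}\Phi(x,y)\,dy\Bigr)dx,
\end{equation*}
so that the arbitrariness of $\varphi$ forces
\begin{equation*}
Kir_k\Phi(x) = \frac{1}{P_k(x)}\int_{x-1/k}^{x+1/k}\Phi(x,y)\,dy.
\end{equation*}
Since $1/P_k$ is continuous and strictly positive, condition \textit{(1.2.a)} is satisfied for $\mathscr{S}_1=\mathscr{C}_c(\mathbb{R})$.

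Next I would plug in the explicit form $P_k(x)=\frac{k}{\pi}\frac{1}{1+k^2x^2}$ to rewrite
\begin{equation*}
Kir_k\Phi(x) = \frac{\pi(1+k^2x^2)}{k}\int_{x-1/k}^{x+1/k}\Phi(x,y)\,dy = \frac{2\pi(1+k^2x^2)}{k^2}\cdot\frac{k}{2}\int_{x-1/k}^{x+1/k}\Phi(x,y)\,dy.
\end{equation*}
The first factor $\frac{2\pi(1+k^2x^2)}{k^2}=2\pi\bigl(\tfrac{1}{k^2}+x^2\bigr)$ converges to $2\pi x^2$ as $k\to\infty$. For the second factor, a change of variables $y=x+t/k$ yields
\begin{equation*}
\frac{k}{2}\int_{x-1/k}^{x+1/k}\Phi(x,y)\,dy = \frac{1}{2}\int_{-1}^{1}\Phi\Bigl(x,x+\tfrac{t}{k}\Bigr)dt,
\end{equation*}
which tends to $\Phi(x,x)$ as $k\to\infty$ by continuity of $\Phi$ (and bounded convergence, using that $\Phi$ is bounded on the compact set where the integrand is supported). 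Combining the two limits gives $\lim_{k\to\infty}Kir_k\Phi(x)=2\pi x^2\Phi(x,x)$, as claimed.

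There is no genuine obstacle here; the only point that deserves a second glance is the behavior at $x=0$, where both factors contribute a vanishing term and the limit is $0$, consistent with the limiting expression $2\pi x^2\Phi(x,x)$. The structural message of the argument is that the Cauchy-type concentration $P_k\rightharpoonup \delta_0$ and the diagonal concentration $\Pi_k$ are balanced by the quadratic weight $x^2$ coming from the ratio of the ``bandwidths'' $1/k$ (width of the diagonal strip) and $k^{-2}/(1+k^2x^2)$ (effective width of $P_k$ away from the origin).
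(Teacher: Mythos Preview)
Your proof is correct and follows essentially the same approach as the paper: derive the explicit formula $Kir_k\Phi(x)=\tfrac{1}{P_k(x)}\int_{x-1/k}^{x+1/k}\Phi(x,y)\,dy$, factor it as $2\pi(\tfrac{1}{k^2}+x^2)\cdot\tfrac{k}{2}\int_{x-1/k}^{x+1/k}\Phi(x,y)\,dy$, and pass to the limit. Your additional justification via the change of variables $y=x+t/k$ and bounded convergence is a welcome elaboration of the final step, which the paper leaves implicit.
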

\begin{proof}
Let us first write out $Kir_k\Phi$ for $k=1,2,3,\ldots$ Since $P_k(x)$ never vanishes we have that
\begin{align*}
Kir_k \Phi(x) &= \frac{1}{P_k(x)}\int_{y\in\mathbb{R}}\Phi(x,y)\mathcal{X}_{[-\tfrac{1}{k},\tfrac{1}{k}]}(x-y) dy\\
&= \frac{\pi}{k}(1+k^2x^2)\int_{x-\tfrac{1}{k}}^{x+\tfrac{1}{k}}\Phi(x,y) dy\\
&= 2\pi\left(\frac{1}{k^2}+x^2\right)\frac{k}{2}\int_{x-\tfrac{1}{k}}^{x+\tfrac{1}{k}}\Phi(x,y) dy.
\end{align*}
The result follows taking the limit for $k\to\infty$.
\end{proof}

Notice that $S_k\to 0=S$ and $T_k\to \delta_0 = T$ in the sense of distributions and $Kir_{T,S}\Phi=0$ for every $\Phi$. Hence the operator that applies $\Phi(x,y)$ into $\tfrac{2}{\pi}x^2\Phi(x,x)$, obtained as a limit of this singular situation can be seen as  a generalization of the Kirchhoff divergence in this case. The formal Laplacian, instead, vanishes since $f(y)-f(x)$ is zero on the diagonal. On the other hand the exact value of the limit in the above proposition depends on the relative rates of convergence of $T_k$ to $\delta_0$ and of $S_k$ to $0$. If instead of the Cauchy density, or Poisson kernel, in Proposition~\ref{propo:LimitSequenceKirchhofftoDeltaDistribution} we use the Gauss kernel $\eta_k(x)=\tfrac{1}{\sqrt{\pi}}e^{-k^2 x^2}$, then a faster convergence of $S_k$ to zero is needed if we want to have a nontrivial limit.

The above considerations suggest that we are dealing with a notion of derivative depending on the rates of convergence of $S_k$ and $T_k$. Let us go back to a (continuous) parameter $h$ tending to zero instead of $\tfrac{1}{k}$ for $k\to\infty$.

In our general setting stated in Section~\ref{sec:intro}, let $\mathcal{T}:(-\varepsilon,\varepsilon)\to\mathscr{S}_1^{'}$ and $\mathcal{S}:(-\varepsilon,\varepsilon)\to\mathscr{S}_2^{'}$ be two distributions valued functions defined for each $h$ with $\abs{h}<\varepsilon$. Let $\Sigma(\Phi,h)$ be the distribution in $\mathscr{S}_1^{'}$ associated to $\Phi\in\mathscr{S}_2$ and $\mathcal{S}(h)$ by Lemma~\ref{lem:FunctionalSigmaonDistributions}. That is $\proin{\Sigma(\Phi,h)}{\varphi}= \langle\!\langle\mathcal{S}(h),\varphi\Phi\rangle\!\rangle$. We say that $\mathcal{S}$ is differentiable with respect to $\mathcal{T}$ at $\Phi\in\mathscr{S}_2$ if for some $\varepsilon>0$ the quotients $\mathcal{Q}(\Phi,h)=\frac{\Sigma(\Phi,h)}{\mathcal{T}(h)}$ are well defined as objects of $\mathscr{S}_1^{'}$ for $\abs{h}<\varepsilon$ and $\mathcal{Q}(\Phi,h)\to\tfrac{d\mathcal{S}}{d\mathcal{T}}(\Phi)\in\mathscr{S}_1^{'}$
in the sense of $\mathscr{S}_1^{'}$, as $h$ tends to zero. Some examples of existence and identification of these objects are in order.

\begin{theorem}
The basic setting is that of Proposition~\ref{propo:LaplacianOperatorFiniteDifference}, $\mathscr{S}_1=\mathscr{C}_c(\mathbb{R}^n)$, $\mathscr{S}_2=\mathscr{C}_c(\mathbb{R}^n\times\mathbb{R}^n)$. Set $\mathcal{T}:(0,1)\to\mathscr{S}_1^{'}$ given by $\mathcal{T}(h)=h^2\sum_{\vec{j}\in\mathbb{Z}^n} h^n\delta_{h\vec{j}}$. Set $\mathcal{S}:(0,1)\to \mathscr{S}_2^{'}$ given by $\mathcal{S}(h)=\sum_{\vec{k}\in\mathbb{Z}^n}\sum_{\{\vec{j}: \abs{\vec{j}-\vec{k}}=1\}} h^n\delta_{h\vec{k}}\times\delta_{h\vec{j}}$. Then, for $\Phi\in\mathscr{C}^2(\mathbb{R}^n\times\mathbb{R}^n)$ vanishing on the diagonal we have
\begin{equation*}
\frac{d\mathcal{S}}{d\mathcal{T}}(\Phi) = \Delta_y\Phi(x,x).
\end{equation*}
Moreover,
\begin{equation*}
\frac{d\mathcal{S}}{d\mathcal{T}}(f(y)-f(x)) = \Delta f,
\end{equation*}
for $f$ in $\mathscr{C}^2(\mathbb{R}^n)$.
\end{theorem}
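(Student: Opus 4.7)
The plan is to compute the quotient $\mathcal{Q}(\Phi,h)=\Sigma(\Phi,h)/\mathcal{T}(h)$ in closed form using Proposition~\ref{propo:Kirfinite}(c), recognize it as the standard centered second difference in the $y$-variable evaluated on the diagonal, and then pass to the limit using Taylor's formula coupled with a Riemann sum.

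First, I would read off the weights of $\mathcal{T}(h)$ and $\mathcal{S}(h)$: the node weight is $a_{\vec{k}}=h^{n+2}$ and the pair weight is $w_{\vec{k}\vec{j}}=h^n$ if $|\vec{k}-\vec{j}|=1$ and zero otherwise. Proposition~\ref{propo:Kirfinite}(c) then identifies $\mathcal{Q}(\Phi,h)$ with the function whose value at the grid node $h\vec{k}$ is
\begin{equation*}
\mathcal{Q}(\Phi,h)(h\vec{k})=\frac{1}{h^{n+2}}\sum_{|\vec{j}-\vec{k}|=1}h^n\Phi(h\vec{k},h\vec{j})=\frac{1}{h^2}\sum_{m=1}^n\bigl[\Phi(h\vec{k},h\vec{k}+h\vec{e}_m)+\Phi(h\vec{k},h\vec{k}-h\vec{e}_m)\bigr].
\end{equation*}
Invoking the hypothesis $\Phi(x,x)=0$, I would insert the vanishing term $-2\Phi(h\vec{k},h\vec{k})$ in each summand to rewrite the expression as
\begin{equation*}
\mathcal{Q}(\Phi,h)(h\vec{k})=\sum_{m=1}^n\frac{\Phi(h\vec{k},h\vec{k}+h\vec{e}_m)-2\Phi(h\vec{k},h\vec{k})+\Phi(h\vec{k},h\vec{k}-h\vec{e}_m)}{h^2},
\end{equation*}
which is exactly the symmetric centered second difference in the variable $y$ of $\Phi$ taken at $(h\vec{k},h\vec{k})$. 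Since $\Phi\in\mathscr{C}^2$, a second-order Taylor expansion in $y$ gives $\mathcal{Q}(\Phi,h)(h\vec{k})=\Delta_y\Phi(h\vec{k},h\vec{k})+o(1)$, uniformly on compact subsets of $\mathbb{R}^n$.

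To promote this pointwise limit to convergence in $\mathscr{S}_1'$, I would pair $\mathcal{Q}(\Phi,h)$ with a test function $\varphi\in\mathscr{C}_c(\mathbb{R}^n)$ against the canonical grid measure $\lambda_h=h^n\sum_{\vec{k}}\delta_{h\vec{k}}$, which is the normalization compatible with $\mathcal{Q}(\Phi,h)\mathcal{T}(h)=\Sigma(\Phi,h)$ and which converges weakly to Lebesgue measure. The pairing is
\begin{equation*}
\sum_{\vec{k}}h^n\varphi(h\vec{k})\mathcal{Q}(\Phi,h)(h\vec{k}),
\end{equation*}
a finite sum whose active nodes remain within a fixed compact neighborhood of $\supp\varphi$ for all small $h$. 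Combining the uniform Taylor bound with the classical Riemann-sum convergence
\begin{equation*}
h^n\sum_{\vec{k}}\varphi(h\vec{k})\Delta_y\Phi(h\vec{k},h\vec{k})\longrightarrow\int_{\mathbb{R}^n}\varphi(x)\Delta_y\Phi(x,x)\,dx,
\end{equation*}
both controlled on $\supp\varphi$, yields $\langle\mathcal{Q}(\Phi,h),\varphi\rangle\to\int\varphi(x)\Delta_y\Phi(x,x)\,dx$, i.e.\ $\tfrac{d\mathcal{S}}{d\mathcal{T}}(\Phi)=\Delta_y\Phi(x,x)$ in $\mathscr{S}_1'$.

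The statement for the Laplacian is then immediate: the choice $\Phi(x,y)=f(y)-f(x)$ vanishes on the diagonal and satisfies $\Delta_y\Phi(x,y)=\Delta f(y)$, so evaluation on the diagonal recovers $\Delta f(x)$. The only real subtlety in the argument is fixing the convention under which the grid-supported object $\mathcal{Q}(\Phi,h)$ is interpreted as an element of $\mathscr{S}_1'$; once this is done through the measure $\lambda_h$, what remains is the clean coupling of a second-order Taylor estimate with a standard Riemann sum, both uniform on the compact support of $\varphi$.
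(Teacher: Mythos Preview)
Your proof is correct and follows essentially the same route as the paper: compute $\mathcal{Q}(\Phi,h)$ on the grid via Proposition~\ref{propo:Kirfinite}(c), insert the vanishing diagonal term to obtain the centered second difference in $y$, apply Taylor's formula, and pass to the limit through a Riemann sum against a test function $\varphi\in\mathscr{C}_c(\mathbb{R}^n)$.

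The only difference is the convention you flag yourself: the paper realizes the grid-supported object $\mathcal{Q}(\Phi,h)$ as a piecewise constant function, setting $Kir_h\Phi(x)=\sum_{\vec{k}} Kir_h\Phi(h\vec{k})\,\mathcal{X}_{Q(h\vec{k})}(x)$ and pairing against Lebesgue measure, so the coefficients are $\int_{Q(h\vec{k})}\varphi\,dx$ rather than your $h^n\varphi(h\vec{k})$. Since $\varphi$ is uniformly continuous on its support, these two Riemann sums differ by $o(1)$ and both converge to $\int_{\mathbb{R}^n}\varphi(x)\Delta_y\Phi(x,x)\,dx$; the arguments are interchangeable.
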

\begin{proof}
We have that for positive $h$,
\begin{equation*}
\mathcal{Q}(\Phi,h) = \frac{\Sigma(\Phi,h)}{\mathcal{T}(h)} = Kir_h\Phi.
\end{equation*}
To check the convergence in $\mathscr{S}_1^{'}$ of $\mathcal{Q}(\Phi,h)$, with $\Phi\in\mathscr{C}^2(\mathbb{R}^{2n})$, take  a test function $\varphi\in\mathscr{S}_1=\mathscr{C}_c(\mathbb{R}^n)$, then from Proposition~\ref{propo:LaplacianOperatorFiniteDifference}, we have
\begin{align*}
&\proin{\mathcal{Q}(\Phi,h)}{\varphi} = \proin{Kir_{h}\Phi}{\varphi}\\
&= \sum_{\vec{k}\in\mathbb{Z}^n}\left(\frac{1}{h^2}\sum_{m=1}^n [\Phi(h\vec{k},h(\vec{k}+\vec{e_m})) + \Phi(h\vec{k},h(\vec{k}-\vec{e_m}))]\right)\int_{Q(h\vec{k})}\varphi(x) dx\\
&= \sum_{\vec{k}\in\mathbb{Z}^n}\left(\sum_{m=1}^n \frac{[\Phi(h\vec{k},h(\vec{k}+\vec{e_m})) - 2\Phi(h\vec{k},h\vec{k}) + \Phi(h\vec{k},h(\vec{k}-\vec{e_m}))]}{h^2}\right)\int_{Q(h\vec{k})}\varphi(x) dx.
\end{align*}
In the second equation above we are taking the continuous variable version of $Kir_h\Phi$ provided in Proposition~\ref{propo:LaplacianOperatorFiniteDifference}. In other words $Kir_h\Phi(x)=\sum_{\vec{k}\in\mathbb{Z}^n} Kir_h(h\vec{k})\mathcal{X}_{Q(h\vec{k})}(x)$, where $Q(h\vec{k})=\prod_{m=1}^n[h k_m, h(k_m+1)]$.
Since $\Phi$ is $\mathscr{C}^2(\mathbb{R}^2)$, from Taylor formula in the $y$ variables for $\Phi$ and letting $h$ tend to zero we get
\begin{equation*}
\proin{\mathcal{Q}(\Phi,h)}{\varphi} \to \int_{x\in\mathbb{R}^n}\Delta_y\Phi(x,x) \varphi(x) dx
\end{equation*}
as $h\to 0$ for every $\varphi\in\mathscr{C}_c(\mathbb{R}^n)$. Hence $\frac{d\mathcal{S}}{d\mathcal{T}}(\Phi)$ for $\Phi\in\mathscr{C}^2(\mathbb{R}^n\times\mathbb{R}^n)$ is the continuous function $\Delta_y\Phi(x,x)$.
\end{proof}

A second case of $\frac{d\mathcal{S}}{d\mathcal{T}}$ which has an explicit formula is the associated to Proposition~\ref{propo:DiscreteFractionalLaplacian}, \ref{propo:KirDivergenceFracCase1} and \ref{propo:KirFracRnIdenticallyOne} regarding fractional powers of the Laplacian.
\begin{theorem}
Let $\alpha>0$, $\mathscr{S}_1=\mathscr{C}_c(\mathbb{R}^n)$, $\mathscr{S}_2=\mathscr{C}_c(\mathbb{R}^n\times \mathbb{R}^n)$ and $\mathcal{T}:(0,1)\to\mathscr{S}_1^{'}$ given by $\mathcal{T}(h)=h^{\alpha}\sum_{\vec{k}\in\mathbb{Z}^n}h^n\delta_{h\vec{k}}$. Let $\mathcal{S}:(0,1)\to\mathscr{S}_2^{'}$ given by
\begin{equation*}
\mathcal{S}(h) = \sum_{\vec{k}\neq \vec{j}}h^{2n}\frac{1}{\abs{h\vec{k}-h\vec{j}}^{n+\alpha}}\delta_{(h\vec{k},h\vec{j})}.
\end{equation*}
Then for $0<\alpha<2$ and $\Phi\in\mathscr{S}_2$ vanishing on the diagonal and smooth we have
\begin{equation*}
\frac{\partial\mathcal{S}}{\partial\mathcal{T}}(\Phi) = (-\Delta)_y^{\tfrac{\alpha}{2}}\Phi(x,x).
\end{equation*}
\end{theorem}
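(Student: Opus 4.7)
My plan is to follow the blueprint of the preceding theorem (the $\alpha=2$ case), replacing the nearest-neighbour finite-difference expression by the long-range one of Proposition~\ref{propo:DiscreteFractionalLaplacian}. First I would identify the quotient $\mathcal{Q}(\Phi,h)=\Sigma(\Phi,h)/\mathcal{T}(h)$ via the defining equation \textit{(1.2.b)}. Since $\mathcal{T}(h)$ is a positive discrete measure supported on $h\mathbb{Z}^n$ with mass $h^{n+\alpha}$ at each node and $\mathcal{S}(h)$ is the discrete measure with weight $h^{2n}\abs{h\vec{k}-h\vec{j}}^{-(n+\alpha)}=h^{n-\alpha}\abs{\vec{k}-\vec{j}}^{-(n+\alpha)}$ at $(h\vec{k},h\vec{j})$, Proposition~\ref{propo:Kirfinite}(c), adapted to the renormalised $\mathcal{T}(h)$, identifies $\mathcal{Q}(\Phi,h)$ with the piecewise-constant function built from the discrete fractional Kirchhoff operator of Proposition~\ref{propo:DiscreteFractionalLaplacian}, exactly as the preceding theorem uses the piecewise-constant version $Kir_h\Phi(x)=\sum_{\vec{k}}Kir_h\Phi(h\vec{k})\mathcal{X}_{Q(h\vec{k})}(x)$.

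Second, exploiting the hypothesis $\Phi(x,x)=0$, I would pair $\mathcal{Q}(\Phi,h)$ against a test function $\varphi\in\mathscr{C}_c(\mathbb{R}^n)$ and rewrite the action as
\begin{equation*}
\proin{\mathcal{Q}(\Phi,h)}{\varphi}=\sum_{\vec{k}\in\mathbb{Z}^n}\biggl(\sum_{\vec{m}\neq\vec{0}}\frac{\Phi(h\vec{k},h\vec{k}+h\vec{m})-\Phi(h\vec{k},h\vec{k})}{\abs{h\vec{m}}^{n+\alpha}}\,h^n\biggr)\int_{Q(h\vec{k})}\varphi(x)\,dx,
\end{equation*}
after the change of index $\vec{j}=\vec{k}+\vec{m}$ and the scaling identity $\abs{\vec{k}-\vec{j}}^{-(n+\alpha)}=h^{n+\alpha}\abs{h\vec{k}-h\vec{j}}^{-(n+\alpha)}$. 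In this form the inner sum is a Riemann-sum discretisation (with mesh $h$) of the principal-value integral $\int_{\mathbb{R}^n}\frac{\Phi(x,y)-\Phi(x,x)}{\abs{y-x}^{n+\alpha}}\,dy$ evaluated at $x=h\vec{k}$, which is, up to the standard normalising constant $c_{n,\alpha}$, exactly $(-\Delta)^{\alpha/2}_y\Phi(x,x)$.

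Third, the analytic core splits by the strength of the singularity. For $0<\alpha<1$ the kernel is locally integrable against the Lipschitz increment $\Phi(x,y)-\Phi(x,x)=O(\abs{y-x})$, so both the integral and its Riemann approximant are absolutely convergent and the $C^2$-regularity plus compact support of $\Phi$ provide uniform control in $\vec{k}$, as in Lemma~\ref{lem:VergenceFractDistributionCase1} and Proposition~\ref{propo:KirDivergenceFracCase1}. For $1\leq\alpha<2$ I would mimic Lemma~\ref{lemma:VergenceFracDistributionCase2}: by the symmetry $\vec{m}\leftrightarrow-\vec{m}$ of $\mathbb{Z}^n\setminus\{\vec{0}\}$ the first-order Taylor contribution $\nabla_y\Phi(x,x)\cdot(y-x)$ cancels identically in both sum and integral, and the $C^2$ remainder is $O(\abs{y-x}^2)$, which is integrable against $\abs{y-x}^{-(n+\alpha)}$ near the diagonal precisely because $\alpha<2$.

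Finally, combining the inner pointwise convergence at each lattice node with the uniform estimates just described (so that the inner sum is uniformly bounded on the support of $\varphi$) allows the outer Riemann sum $\sum_{\vec{k}}(\,\cdots\,)\int_{Q(h\vec{k})}\varphi\,dx$ to pass to the limit, yielding $\proin{\mathcal{Q}(\Phi,h)}{\varphi}\to\int_{\mathbb{R}^n}\varphi(x)(-\Delta)^{\alpha/2}_y\Phi(x,x)\,dx$ for every $\varphi\in\mathscr{C}_c(\mathbb{R}^n)$, which is convergence in $\mathscr{S}_1^{'}$ and identifies $\tfrac{d\mathcal{S}}{d\mathcal{T}}(\Phi)$ with the continuous function $(-\Delta)^{\alpha/2}_y\Phi(x,x)$. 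The main obstacle is the range $1\leq\alpha<2$: the required uniformity of the symmetric-cancellation bound in $\vec{k}$ has to be extracted simultaneously from the $C^2$ Taylor remainder and the lattice symmetry, a delicate bookkeeping absent from the preceding $\alpha=2$ theorem, where only a three-point second difference needed to be controlled.
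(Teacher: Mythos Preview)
Your proposal is correct and follows essentially the same route as the paper: identify $\mathcal{Q}(\Phi,h)$ with the discrete fractional Kirchhoff operator of Proposition~\ref{propo:DiscreteFractionalLaplacian}, pair against $\varphi\in\mathscr{C}_c(\mathbb{R}^n)$, and pass to the limit via Riemann-sum convergence, using the lattice symmetry $\vec{m}\leftrightarrow-\vec{m}$ to kill the first-order Taylor term. The only organizational difference is that the paper does \emph{not} split cases according to $\alpha$; instead it treats all $0<\alpha<2$ uniformly by always subtracting $\nabla_y\Phi(h\vec{k},h\vec{k})\cdot(h\vec{j}-h\vec{k})$ on the near region $\abs{h\vec{k}-h\vec{j}}<1$ (the term $\sigma_k^{h1,2}$, which vanishes identically by symmetry) and working with the $C^2$ remainder there, while handling the far region $\abs{h\vec{k}-h\vec{j}}\geq 1$ directly---your case split by $\alpha$ is a valid alternative that avoids the subtraction when $\alpha<1$, at the cost of two separate estimates.
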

\begin{proof}
For $h>0$ fixed we have that
\begin{equation*}
\mathcal{Q}(\Phi,h)=\frac{\Sigma(\Phi,h)}{\mathcal{T}(h)}(x)
=\sum_{\vec{k}\in\mathbb{Z}^n}\mathcal{X}_{Q(h\vec{k})}(x)
\left(\frac{1}{h^\alpha}\sum_{\vec{j}\neq\vec{k}}\frac{\Phi(h\vec{k},h\vec{j})}{\abs{h\vec{k}-h\vec{j}}^{n+\alpha}}\right),
\end{equation*}
from Proposition~\ref{propo:DiscreteFractionalLaplacian}. Take $\varphi\in\mathscr{S}_1=\mathscr{C}_c(\mathbb{R}^n)$, then
\begin{align*}
\proin{\mathcal{Q}(\Phi,h)}{\varphi} & =  \sum_{\vec{k}\in\mathbb{Z}^n}\left(\sum_{\vec{j}\neq\vec{k}}
\frac{\Phi(h\vec{k},h\vec{j})-\Phi(h\vec{k},h\vec{k})}{\abs{h\vec{k}-h\vec{j}}^{n+\alpha}} h^n\right)
\int_{Q(h\vec{k})}\varphi dx\\
&= \int_{\mathbb{R}^n}\varphi(x) \sum_{\vec{k}\in\mathbb{Z}^n}
\left(\sum_{\vec{j}\neq\vec{k}}
\frac{\Phi(h\vec{k},h\vec{j})-\Phi(h\vec{k},h\vec{k})}{\abs{h\vec{k}-h\vec{j}}^{n+\alpha}} h^n\right)
\mathcal{X}_{Q(h\vec{k})}(x) dx\\
&= \int_{\mathbb{R}^n}\varphi(x) \sum_{\vec{k}\in\mathbb{Z}^n}\sigma_k \mathcal{X}_{Q(h\vec{k})}(x) dx.
\end{align*}
For fixed $\vec{k}\in\mathbb{Z}^n$ and small $h>0$, let us divide the inner sum $\sigma_k$ in two parts: $\sigma_k^h=\sigma_k^{h1} + \sigma_k^{h2}$. Take
\begin{align*}
\sigma_k^{h1} &= \sum_{\{\vec{j}: 0<\abs{\vec{j}-\vec{k}}<\tfrac{1}{h}\}} \frac{\Phi(h\vec{k},h\vec{j})-\Phi(h\vec{k},h\vec{k})}{\abs{h\vec{k}-h\vec{j}}^{n+\alpha}} h^n\\
&= \sum_{\{\vec{j}: 0<\abs{\vec{j}-\vec{k}}<\tfrac{1}{h}\}} h^n \frac{\Phi(h\vec{k},h\vec{j})-\Phi(h\vec{k},h\vec{k})-\nabla_y \Phi(h\vec{k},h\vec{k})\cdot (\vec{j}-\vec{k})h}{\abs{h\vec{k}-h\vec{j}}^{n+\alpha}}\\
&\phantom{\sum} + \sum_{\{\vec{j}: 0<\abs{\vec{j}-\vec{k}}<\tfrac{1}{h}\}}
h^n \nabla_y \Phi(h\vec{k},h\vec{k})\cdot \frac{(h\vec{j}-h\vec{k})}{\abs{h\vec{k}-h\vec{j}}^{n+\alpha}}\\
&= \sigma_k^{h1,1} + \sigma_k^{h1,2}.
\end{align*}
Since $\Phi$ is of class $\mathscr{C}^2$ and has bounded support, $0<\alpha<2$ and $\varphi$ is continuous with compact support, then, the function of $x$ and $y$ given by
\begin{equation*}
\mathcal{X}_{\{\abs{x-y}<1\}}(x,y) \varphi(x) \frac{\Phi(x,y)-\Phi(x,x)-\nabla_y\Phi (x,x)\cdot (y-x)}{\abs{x-y}^{n+\alpha}}
\end{equation*}
is absolutely integrable in $\mathbb{R}^{2n}$. See the arguments in Lemma~\ref{lemma:VergenceFracDistributionCase2} above. Moreover the above function of $(x,y)\in\mathbb{R}^{2n}$ is continuous except on the diagonal $x=y$ and on $\abs{x-y}=1$. Hence we can approximate its double integral on $\mathbb{R}^{2n}$ through Riemann sums. So that
\begin{align*}
&\int_{\mathbb{R}^n} \varphi(x)\int_{\abs{x-y}<1} \frac{\Phi(x,y)-\Phi(x,x)-\nabla_y \Phi(x,x)\cdot (y-x)}{\abs{x-y}^{n+\alpha}} dy dx\\
&= \lim_{h\to 0^+}\sum_{\vec{k}\in\mathbb{Z}^n} \varphi(h\vec{k})\sum_{0<\abs{h\vec{k}-h\vec{j}}<1}
\frac{\Phi(h\vec{k},h\vec{j})-\Phi(h\vec{k},h\vec{k})-\nabla_y \Phi(h\vec{k},h\vec{k})\cdot (\vec{j}-\vec{k})h}{\abs{h\vec{k}-h\vec{j}}^{n+\alpha}} h^{2n}\\
&= \lim_{h\to 0^+}\int_{\mathbb{R}^n} \varphi(x)\sum_{\vec{k}\in\mathbb{Z}^n}\mathcal{X}_{Q(h\vec{k})}(x)\\
&\phantom{\lim_{h\to 0^+}\int_{\mathbb{R}^n}}
\cdot\left(\sum_{0<\abs{\vec{j}-\vec{k}}<\frac{1}{h}} h^n \frac{\Phi(h\vec{k},h\vec{j})-\Phi(h\vec{k},h\vec{k})-\nabla_y \Phi(h\vec{k},h\vec{k})\cdot (\vec{j}-\vec{k})h}{\abs{h\vec{k}-h\vec{j}}^{n+\alpha}}\right) dx\\
&=\lim_{h\to 0^+}\int_{\mathbb{R}^n} \varphi(x)\sum_{\vec{k}\in\mathbb{Z}^n}\sigma_k^{h1,1}\mathcal{X}_{Q(h\vec{k})}(x) dx.
\end{align*}
Hence, since $\Phi(x,x)=0$ and since the integral on a ball centered at $x$ of radius $\frac{y_i-x_i}{\abs{x-y}^{n+\alpha}}$ vanishes for all $i=1,\ldots,n$, we have
\begin{align*}
&\lim_{h\to 0^+}\int_{\mathbb{R}^n} \varphi(x)\sum_{\vec{k}\in\mathbb{Z}^n}\sigma_k^{h1,1}\mathcal{X}_{Q(h\vec{k})}(x) dx\\
&= \int_{x\in\mathbb{R}^n} \varphi(x)\int_{\abs{x-y}<1}\frac{\Phi(h\vec{k},h\vec{j})-\Phi(h\vec{k},h\vec{k})-\nabla_y \Phi(h\vec{k},h\vec{k})\cdot (\vec{j}-\vec{k})h}{\abs{h\vec{k}-h\vec{j}}^{n+\alpha}} dy dx\\
&= \int_{x\in\mathbb{R}^n} \varphi(x)\left(\int_{\abs{x-y}<1} \frac{\Phi(x,y)}{\abs{x-y}^{n+\alpha}} dy\right) dx.
\end{align*}
Let us consider the term $\sigma_k^{h1,2}$. Notice that
\begin{align*}
\sigma_k^{h1,2} &= h^n \nabla_y\Phi(h\vec{k},h\vec{k})\cdot \sum_{0<\abs{\vec{j}-\vec{k}}<\tfrac{1}{h}} \frac{h\vec{j}-h\vec{k}}{\abs{h\vec{j}-h\vec{k}}^{n+\alpha}}\\
&= h^{1-\alpha}\nabla_y\Phi(h\vec{k},h\vec{k})\cdot \sum_{0<|\vec{i}|<\tfrac{1}{h}} \frac{\vec{i}}{|\vec{i}|^{n+\alpha}}.
\end{align*}
Since for $\vec{i}\in\mathbb{Z}^n$ with $0<|\vec{i}|<\tfrac{1}{h}$ we have that $-\vec{i}$ satisfies the same condition, we have that the last sum is the zero vector in $\mathbb{R}^n$ and $\sigma_k^{h1,2}=0$, for every $\vec{k}$ and for every $h>0$.

Let us now consider the convergence for $\sigma_k^{h2}$. We have to prove that
\begin{align*}
&\lim_{h\to 0^+}\int_{\mathbb{R}^n} \varphi(x)\sum_{\vec{k}\in\mathbb{Z}^n}\mathcal{X}_{Q(h\vec{k})}
\left(\sum_{\abs{\vec{j}-\vec{k}}>\tfrac{1}{h}} h^n \frac{\Phi(h\vec{k},h\vec{j})}{\abs{h\vec{k}-h\vec{j}}^{n+\alpha}}\right) dx\\
&=\int_{\mathbb{R}^n} \varphi(x)\int_{\{y: \abs{x-y}\geq 1\}}\frac{\Phi(x,y)}{\abs{x-y}^{n+\alpha}} dy dx.
\end{align*}
But this fact is again a consequence of the continuity and support properties of $\Phi$ and $\varphi$. Hence we have
\begin{align*}
\lim_{h\to 0^+}\proin{\mathcal{Q}(\Phi,h)}{\varphi} &= \int_{\mathbb{R}^n}\varphi(x) \left(\int_{y\in\mathbb{R}^n}\frac{\Phi(x,y)}{\abs{x-y}^{n+\alpha}} dy\right)dx\\
&= \int_{\mathbb{R}^n}\varphi(x)(-\Delta^\alpha)_y\Phi(x,x) dx,
\end{align*}
as desired.
\end{proof}

 Even when the above results are reformulations of known methods of approximation by finite differences of integer and fractional differential operators, the existence of $\frac{d\mathcal{S}}{d\mathcal{T}}$ for the functions $\mathcal{S}(h)$ and $\mathcal{T}(h)$ given by graphs, could be of help at understanding the diffusion processes on such structures.

As a final example let us compute the derivative $\frac{d\mathcal{S}}{d\mathcal{T}}$ for some deterministic coupled measures. For the sake of simplicity we choose a very elementary case of a certainly more general situation.

\begin{proposition}
Let $X=[0,1]$ with $\mathscr{S}_1$ and $\mathscr{S}_2$ the spaces of continuous functions on $X$ and $X\times X$ respectively. Let $F=F(h,x)$ be, for each $h\in [0,1]$, a function from $[0,1]$ into itself. Assume that $F$ is differentiable and that $F(0,x)=x$, the identity on $[0,1]$. Let $\mathcal{T}(h)$ be the measure $\mu$ with $d\mu=dx$ defined on $[0,1]$. Let $\pi_h$ be the measure defined on the Borel sets of $[0,1]^2$ by $\pi_h=\mu\circ G^{-1}_h$ with $G_h:[0,1]\to [0,1]^2$, is given by $G_h(x)=(x,F(h,x))$. In other words $\pi_h(A)=\mu(\{x: G_h(x)\in A\})$. Set $\mathcal{S}(h)$ to denote the distribution induced by $\pi_h$ on $\mathscr{S}_2$. Let $\Phi\in\mathscr{S}_2$ be a smooth function vanishing on the diagonal. Then $\frac{d\mathcal{S}}{d\mathcal{T}}(\Phi)$ exists and is the function in $[0,1]$ given by
\begin{equation*}
\frac{d\mathcal{S}}{d\mathcal{T}}(\Phi)(x) = \frac{\partial F}{\partial h}(0,x) \frac{\partial\Phi}{\partial y}(x,x).
\end{equation*}
\end{proposition}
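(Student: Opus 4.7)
The plan is to identify the quotient $\mathcal{Q}(\Phi, h) = \Sigma(\Phi, h)/\mathcal{T}(h)$ explicitly as a function on $[0,1]$, and then perform a first order Taylor expansion to extract the limit. By Proposition~\ref{propo:DeterministicCase} applied to the deterministic coupling $\pi_h = \mu\circ G_h^{-1}$ (together with the $1/h$ rescaling explicitly pointed out in the remark that follows that proposition), the quotient takes the form
\begin{equation*}
\mathcal{Q}(\Phi, h)(x) \;=\; \frac{1}{h}\,\Phi\bigl(x,\,F(h,x)\bigr).
\end{equation*}
Since $\Phi$ vanishes on the diagonal and $F(0,x) = x$, this is the genuine difference quotient
\begin{equation*}
\mathcal{Q}(\Phi, h)(x) \;=\; \frac{\Phi(x, F(h,x)) - \Phi(x, F(0,x))}{h}.
\end{equation*}

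Next, I would apply the mean value theorem in the second variable of $\Phi$, using the $\mathscr{C}^1$ regularity hypothesis, to write
\begin{equation*}
\mathcal{Q}(\Phi, h)(x) \;=\; \frac{\partial \Phi}{\partial y}\bigl(x,\,\xi_{h,x}\bigr)\cdot \frac{F(h,x)-F(0,x)}{h},
\end{equation*}
for some intermediate point $\xi_{h,x}$ lying between $x$ and $F(h,x)$. The pointwise limit is then transparent: because $F(0,x)=x$ and $F$ is continuous, $\xi_{h,x}\to x$ as $h\to 0$; by continuity of $\partial\Phi/\partial y$ the first factor tends to $\partial\Phi/\partial y(x,x)$, while by differentiability of $F$ in the $h$-variable the second factor tends to $\partial F/\partial h(0,x)$. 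The product is the desired expression.

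Finally, to upgrade pointwise convergence to convergence in $\mathscr{S}_1' = \mathscr{C}([0,1])'$, I would observe that both limits above are actually uniform on the compact interval $[0,1]$: $\partial\Phi/\partial y$ is uniformly continuous on $[0,1]^2$, $F$ is uniformly $\mathscr{C}^1$ on $[0,1]\times[0,1]$, and $F(h,\cdot)\to F(0,\cdot)$ uniformly. Consequently $\mathcal{Q}(\Phi,h)\to (\partial F/\partial h)(0,\cdot)\,(\partial\Phi/\partial y)(\cdot,\cdot)$ in the supremum norm on $[0,1]$, which trivially implies weak-$\ast$ convergence against every $\varphi\in\mathscr{C}([0,1])$. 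The only subtle bookkeeping point, and arguably the main conceptual obstacle, is recognising that the statement implicitly carries the scaling $\mathcal{T}(h)=h\mu$ (equivalently, $\mathcal{S}(h)=h^{-1}\pi_h$), so that $\mathcal{Q}(\Phi,h)$ is a genuine difference quotient and not the trivially vanishing object $\Phi(x,F(h,x))$; once that normalization is fixed, the rest is classical calculus on a compact interval.
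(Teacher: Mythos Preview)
Your argument is correct and follows essentially the same route as the paper: identify $\mathcal{Q}(\Phi,h)(x)=\tfrac{1}{h}\Phi(x,F(h,x))$ via the deterministic-coupling computation, rewrite it as the difference quotient $\tfrac{\Phi(x,F(h,x))-\Phi(x,F(0,x))}{h}$ using $\Phi(x,x)=0$ and $F(0,x)=x$, and pass to the limit. The paper stops at ``the result follows by taking $\lim_{h\to 0}\mathcal{Q}(\Phi,h)$,'' so your mean-value-theorem step and the uniform-convergence upgrade to $\mathscr{S}_1'$ are additional detail rather than a different strategy; you also correctly flag the implicit normalization $\mathcal{T}(h)=h\,dx$, which the paper uses in its first displayed line without stating it in the hypotheses.
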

\begin{proof}
For $h>0$ fixed we have
\begin{align*}
h\int_{[0,1]} \varphi(x) \psi_h(x) dx &= \proin{\mathcal{T}(h)}{\varphi\psi_h}\\
&= \langle\!\langle \mathcal{S}(h),\varphi\Phi\rangle\!\rangle\\
&= \iint_{[0,1]^2}\varphi(x)\Phi(x,y) d\pi_h(x,y)\\
&= \int_{[0,1]}\varphi(x)\Phi(x,F(h,x)) dx,
\end{align*}
for every $\varphi\in\mathscr{C}([0,1])$. Then, $\mathcal{Q}(\Phi,h)=\psi_h$ is the function
\begin{align*}
\mathcal{Q}(\Phi,h) &= \frac{1}{h}\Phi(x,F(h,x))\\
&= \frac{1}{h} [\Phi(x,F(h,x))-\Phi(x,x)]\\
&= \frac{\Phi(x,F(h,x))-\Phi(x,F(0,x))}{h}.
\end{align*}
And the result follows by taking $\lim_{h\to 0}\mathcal{Q}(\Phi,h)$.
\end{proof}

An example of the above is provided by the approximation $F(h,x)=x^{1+h}$ of the diagonal of $[0,1]^2$. In this case
\begin{equation*}
\frac{d\mathcal{S}}{d\mathcal{T}}(\Phi)(x) = x\log x \frac{\partial\Phi}{\partial y}(x,x).
\end{equation*}


\newcommand{\etalchar}[1]{$^{#1}$}
\providecommand{\bysame}{\leavevmode\hbox to3em{\hrulefill}\thinspace}
\providecommand{\MR}{\relax\ifhmode\unskip\space\fi MR }
\providecommand{\MRhref}[2]{%
	\href{http://www.ams.org/mathscinet-getitem?mr=#1}{#2}
}
\providecommand{\href}[2]{#2}



\bigskip

\noindent{\footnotesize Hugo Aimar, and Ivana G\'omez.
	\textsc{Instituto de Matem\'{a}tica Aplicada del Litoral, CONICET, UNL.}
	
	%
%
\medskip

%

\noindent \textit{Address.} \textmd{IMAL, CCT CONICET Santa Fe, Predio ``Alberto Cassano'', Colectora Ruta Nac.~168 km~0, Paraje El Pozo, S3007ABA Santa Fe, Argentina.}
}

\bigskip

\end{document}